\theoremstyle{definition}
\newtheorem{example}{Example}[section]
\theoremstyle{plain}
\newtheorem{theorem}[example]{Theorem}
\newtheorem{lemma}[example]{Lemma}
\newtheorem{proposition}[example]{Proposition}
\newtheorem{corollary}[example]{Corollary}
\newtheorem{notation}[example]{Notation}
\theoremstyle{remark}
\newtheorem{remark}[example]{Remark}
\theoremstyle{definition}
\newtheorem{definition}[example]{Definition}
\renewcommand{\phi}{\varphi}
\renewcommand{\epsilon}{\varepsilon}
\renewcommand{\flat}{\text{flat}}
\DeclareMathOperator{\ev}{ev}
\DeclareMathOperator{\GL}{GL}
\DeclareMathOperator{\Rep}{Rep}
\DeclareMathOperator{\Tr}{Tr}
\renewcommand{\Vec}{\Vecja}
\DeclareMathOperator{\Vecja}{Vec}
\DeclareMathOperator{\End}{End}
\DeclareMathOperator{\Mod}{Mod}
\DeclareMathOperator{\id}{id}
\DeclareMathOperator{\Hom}{Hom}
\DeclareMathOperator{\coker}{coker}
\DeclareMathOperator{\im}{im}
\DeclareMathOperator{\coim}{coim}
\DeclareMathOperator{\tensor}{\otimes}
\DeclareMathOperator{\re}{Re}
\DeclareMathOperator{\Aut}{Aut}
\DeclareMathOperator{\HS}{HS}
\DeclareMathOperator{\Omgb}{\Omega
}
\def\tilde{\widetilde}
\def\A{\mathcal A}
\def\B{\mathcal B}
\def\C{\mathcal C}
\def\dg{\mathrm{dg}}
\def\E{\mathcal E}
\def\F{\mathcal F}
\def\G{\mathcal G}
\def\S{\mathcal S}
\newcommand{\exterior}{\@ifnextchar^\@extp{\@extp^{\,}}}
\def\@extp^#1{\mathop{\bigwedge\nolimits^{\!#1}}}
\begin{document}

  \title{The fundamental group of a noncommutative space}
  \author{Walter D. van Suijlekom}
\date{\today}
  \address{Institute for Mathematics, Astrophysics and Particle Physics, Radboud University Nijmegen, Heyendaalseweg 135, 6525 AJ Nijmegen, The Netherlands}
  \email{waltervs@math.ru.nl}
  \author{Jeroen Winkel}
\address{Mathematisches Institut der Westf\"alischen Wilhelms-Universit\"at M\"unster, Einsteinstrasse 62, D-48149 M\"unster, Germany}
\email{jwinkel@uni-muenster.de}

\begin{abstract}
  We introduce and analyse a general notion of fundamental group for noncommutative spaces, described by differential graded algebras. For this we consider connections on finitely generated projective bimodules over differential graded algebras and show that the category of flat connections on such modules forms a Tannakian category. As such this category can be realised as the category of representations of an affine group scheme $G$, which in the classical case is (the pro-algebraic completion of) the usual fundamental group. This motivates us to define $G$ to be the fundamental group of the noncommutative space under consideration. The needed assumptions on the differential graded algebra are rather mild and completely natural in the context of noncommutative differential geometry. We establish the appropriate functorial properties, homotopy and Morita invariance of this fundamental group. As an example we find that the fundamental group of the noncommutative torus can be described as the algebraic hull of the topological group $(\mathbb Z+\theta\mathbb Z)^2$. 
\end{abstract}

  \maketitle

\section{Introduction}
The fundamental group is one of the first tools used in algebraic topology to collect information about the shape of a topological space or of a manifold. Given the simplicity of its definition, it is quite surprising that no analogue of this group has been found yet for {\em noncommutative} spaces, especially in the context of noncommutative differential geometry \cite{connes}. This is in contrast to other structures on topological spaces that have found their counterparts in terms of noncommutative ($C^*$)-algebras, such as de Rham cohomology (in terms of cyclic cohomology), topological K-theory (as K-theory for $C^*$-algebras), Riemannian metrics (as spectral triples \cite[Ch. VI]{connes}) and measures (as von Neumann algebras \cite[Ch.V]{connes}).

This paper aims for a definition of the fundamental group of a noncommutative space. Of course, in view of Gelfand duality one could try to dualise based loops in terms of $*$-homomorphisms from a $C^*$-algebra $A$ to $C([0,1])$. However, such a homomorphism $\phi$ would send any commutator $ab-ba$ to 0, which for many noncommutative spaces already means that $\phi$ is trivial. Instead, in the spirit of \cite{Gro60a,Gro60b} we adopt a Tannakian approach to our definition of a fundamental group. In fact, in the case of a differentiable manifold without boundary this becomes very concrete given the classical result that there is an equivalence between the category of representations of the fundamental group and the category of flat connections on vector bundles over that manifold (see for instance \cite[Proposition I.2.5]{Kob87}). One may then reconstruct (the pro-algebraic completion of) the fundamental group of that manifold as the automorphism group of the corresponding Tannakian category ({\em cf.} \cite{Sav72} and \cite{Del90}). 

The noncommutative generalisation of the fundamental group that we propose here is based on the construction in Section \ref{sect:conn-nc} of a category of finitely generated projective bimodules over a differential graded algebra (dga) together with a connection. The difference with previous approaches to connections on bimodules (such as for instance \cite{DuboisVioletteGDA}) is that we demand compatibility of the bimodule connection with {\em all} elements in the dga, which also solves the apparent incompatibility recorded in \cite[Example 2.13]{KLS08}.

Our main result (Section \ref{sect:fund-grp}) is then that under some analytical assumptions on the dga the subcategory where the connections are flat is a Tannakian category. These assumptions are completely natural in the context of noncommutative differential geometry and are for instance fulfilled for quantum metric spaces in the sense of Rieffel \cite{compactquantummetricspaces}. It leads us to define the fundamental group $\pi^1$ of the dga as the group corresponding to this Tannakian category. We establish the appropriate functorial properties of the fundamental group, as well as homotopy and Morita invariance in Section \ref{sect:props}. Note that an alternative way to arrive at a fundamental group for noncommutative spaces would be to develop a theory of noncommutative covering spaces and consider the corresponding automorphisms. This is for instance the approach taken in \cite{Canlubo,Ivankov,SW17}. 

In Section \ref{sect:ex} we then illustrate our construction by considering examples, including all noncommutative tori and, more generally, we consider toric noncommutative manifolds. 

\medskip

\subsubsection*{Acknowledgements}
We would like to thank Ben Moonen and Steffen Sagave for inspiring discussions and valuable suggestions, as well as Alain Connes for several useful remarks. 

\subsubsection*{Notation}
Algebras (assumed to be unital and over $\mathbb C$): $\A,\B$, {\em etc.};

$C^*$-algebras: $A$;

Graded algebras: $\Omega \A = \oplus_{k \geq 0} \Omega^k \A$ with $\Omega^0 \A \equiv \A$ and $|\alpha|$ the degree of $\alpha \in \Omega \A$; 

Differential graded algebras $(\Omega \A, d)$;

Graded modules over graded algebras: $\Omega \E, \Omega \F, \Omega \G$, {\em, etc.}.

Graded centre: $Z_g(\Omega \A)$, $Z_g(\A)\equiv Z_g(\Omega\A)^0$, $Z_g(\E)$, {\em etc.}.

\section{Categories of connections}
\label{sect:conn-nc}
In this section we will define connections over a noncommutative space. These noncommutative spaces are described by differential graded algebras and, in fact, the results in this section apply to any (noncommutative) dga. We consider the category of all connections on finitely generated modules for a dga $(\Omega \A,d)$, and we will prove that it is a rigid tensor category. Of course, the same then holds for the subcategory of flat connections. We also show that the category only depends on the (graded commutative) centre of the dga.

Some words on notation: in this paper we will write a dga as $(\Omega \A,d)$. Even though this might be considered unnatural as dga's are usually written simply as $(\A,d)$, we use this notation to stress that it furnishes a differential geometric structure on the noncommutative space described by $\A$ (and a $C^*$-algebra $A$). This becomes even more apparent in Section \ref{sect:fund-grp} where we impose some analytical conditions on $A$. For the same reason, we will write $(\Omega \E,\nabla)$ for a graded bimodule with a connection, which in the flat case is then of course nothing but a differential graded bimodule. 

\subsection{Connections and bimodules over a dga}
There are multiple ways to generalise vector bundles to a noncommutative space. By Serre--Swan Theorem the vector bundles over a manifold correspond to finitely generated projective modules over the algebra $C^\infty(M)$ of smooth functions. Over a noncommutative algebra we can look at right modules, but there is no suitable tensor product of right modules. We can look at bimodules, but here we have to remember that over a commutative space, only very special bimodules are allowed: the multiplication on the left is the same as on the right. We can look at bimodules that satisfy this condition for the centre of the algebra. Instead we look at a more restrictive class of bimodules, namely those that are a direct summand of a free finite bimodule (\cite{Dubois-Violette-derivations-et-calcul} defines \emph{diagonal bimodules} as modules that are a summand of a free module).

This is very natural when one considers bimodule connections. Namely, connections are usually defined as maps $ \E \to \E \tensor \Omega^1 \A$ that satisfy the appropriate Leibniz rules. However to satisfy the left Leibniz rule the image should actually be $\Omega^1 \A \tensor \E$. This is solved in \cite{DuboisVioletteGDA} by using an isomorphism $\sigma: \E\tensor \Omega^1 \A\to\Omega^1 \A\tensor \E$. However, then the tensor product of two flat connections is not necessarily flat. Instead if one considers graded bimodules $\Omega \E = \oplus_{k \geq 0} \Omega^k \E$ over a graded algebra $\Omega \A = \oplus_{k \geq 0} \Omega^k \A$ that are a summand of a free finite module, we automatically get isomorphisms $\Omega^1 \E\cong \E\tensor\Omega^1 \A\cong \Omega^1 \A\tensor \E$ (see Lemma \ref{lem-omegakE}). Moreover, now the tensor product of flat connections is always flat (see Lemma \ref{lem-curvature_tensor_product}).

Let us now proceed and describe the precise algebraic setup. 
\begin{definition}
A \emph{differential graded algebra} or dga is a graded algebra $\Omega \A$ together with a $\mathbb C$-linear map $d:\Omega \A\to\Omega \A$ of degree +1 satisfying the Leibniz rule
\[d(\omega\nu)=d\omega\cdot\nu+(-1)^{|\omega|}\omega d\nu\]
for $\omega,\nu\in\Omega \A$, and $d^2=0$.
\end{definition}




\begin{notation}
Let $\Omega \E$ be a graded $\Omega \A$-bimodule. We define the graded commutator $[\cdot,\cdot]:\Omega \A\times \Omega \E\to \Omega \E$ as $[\alpha,\epsilon]=\alpha\epsilon-(-1)^{|\alpha|\cdot|\epsilon|}\epsilon\alpha$ for $\alpha\in\Omega \A,\epsilon\in\Omega \E$ and $\mathbb C$-bilinearly extended to non-homogeneous elements. We use the same notation for the graded commutator $[\cdot,\cdot]:\Omega \A\times \Omega \A\to \Omega \A$ defined similarly.
\end{notation}

\begin{definition}
Let $\Omega \A$ be any dga. We call a graded $\Omega \A$-bimodule $\Omega \E$ \emph{finitely generated projective} (fgp) if there is another graded $\Omega \A$-bimodule $\Omega \F$ satisfying $\Omega \E\oplus \Omega \F=\Omega \A^{ n}$ for some integer $n$, as graded $\Omega \A$-bimodules. 
\end{definition}
The following lemma shows why it is convenient to work with these graded fgp $\Omega \A$-bimodules instead of just fgp bimodules over $\A$.

\begin{lemma}\label{lem-omegakE}
  Let $\Omega \E$ be a graded fgp $\Omega \A$-bimodule. Write $\Omega^0 \A=\A$ and $\Omega^0 \E= \E$.
  Then for all $k\geq0$ the multiplication induces isomorphisms
  $$
   \E\tensor_{ \A}\Omega^k \A\xrightarrow\sim\Omega^k \E,\qquad 
    \Omega^k \A\tensor_{ \A}\E\xrightarrow\sim\Omega^k \E.
$$
\end{lemma}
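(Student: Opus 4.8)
The plan is to reduce everything to the case of a free module by the standard ``direct summand of a free module'' trick, after first checking that multiplication gives well-behaved natural maps. For an \emph{arbitrary} graded $\Omega\A$-bimodule $\Omega\E$, multiplication is $\A$-balanced (by associativity of the bimodule action), so it defines homomorphisms of $\A$-bimodules
\[
  \mu_k^{\Omega\E}\colon \E\tensor_\A\Omega^k\A\to\Omega^k\E,\ e\tensor\omega\mapsto e\omega,
  \qquad
  \nu_k^{\Omega\E}\colon \Omega^k\A\tensor_\A\E\to\Omega^k\E,\ \omega\tensor e\mapsto \omega e,
\]
where the tensor products are over $\A=\Omega^0\A$ and the images land in $\Omega^k\E$ because $|e|=0$. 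Both $\mu_k$ and $\nu_k$ are natural in $\Omega\E$: any morphism of graded $\Omega\A$-bimodules is $\Omega\A$-bilinear and degree-preserving, hence restricts to a map in degree $0$ and intertwines the multiplication maps. Moreover source and target are \emph{additive} functors of $\Omega\E$ (for $\mu_k$: the degree-$0$ functor $\Omega\E\mapsto\E$ followed by $-\tensor_\A\Omega^k\A$, resp. the degree-$k$ functor $\Omega\E\mapsto\Omega^k\E$, both commute with finite direct sums), so $\mu_k$ and $\nu_k$ are compatible with direct sum decompositions.

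Next I check the free case $\Omega\E=\Omega\A^n$, where $\E=\A^n$ and $\Omega^k\E=(\Omega^k\A)^n$. Then $\mu_k^{\Omega\A^n}$ is the map $(a_1,\dots,a_n)\tensor\omega\mapsto(a_1\omega,\dots,a_n\omega)$, and its inverse is $(\omega_1,\dots,\omega_n)\mapsto\sum_i e_i\tensor\omega_i$ with $e_i$ the $i$-th standard generator; that these are mutually inverse is a one-line verification using additivity of $\tensor$ in each argument and $\sum_i e_i a_i=(a_1,\dots,a_n)$. Hence $\mu_k^{\Omega\A^n}$ is an isomorphism, and symmetrically so is $\nu_k^{\Omega\A^n}$. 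For a general graded fgp bimodule $\Omega\E$, pick $\Omega\F$ with $\Omega\E\oplus\Omega\F=\Omega\A^n$ as graded $\Omega\A$-bimodules. Restricting to degree $0$ gives $\E\oplus\F=\A^n$ and to degree $k$ gives $\Omega^k\E\oplus\Omega^k\F=(\Omega^k\A)^n$, compatibly with $\mu_k$ by the naturality and additivity above; thus $\mu_k^{\Omega\E}\oplus\mu_k^{\Omega\F}$ is identified with $\mu_k^{\Omega\A^n}$, which is an isomorphism. A map between direct sums which is itself a direct sum of two maps can be an isomorphism only if each summand is, so $\mu_k^{\Omega\E}$ is an isomorphism; the identical argument settles $\nu_k^{\Omega\E}$.

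I do not expect a genuine obstacle here: the content of the lemma is exactly that $-\tensor_\A\Omega^k\A$ and $\Omega^k\A\tensor_\A-$, applied to a direct summand of a free module, behave as they do for the free module itself. The one point requiring care is bookkeeping — one must keep track that the splitting $\Omega\E\oplus\Omega\F=\Omega\A^n$ and all the identifications are of \emph{graded} bimodules, so that passing to the degree-$0$ and degree-$k$ parts is compatible with $\mu_k$ and $\nu_k$. It is precisely this graded structure (which a plain fgp $\A$-bimodule lacks) that makes the isomorphisms hold, which is the point the lemma is meant to illustrate.
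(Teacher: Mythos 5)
Your proof is correct and takes essentially the same route as the paper: both reduce to the free case via a direct sum decomposition $\Omega\E\oplus\Omega\F\cong\Omega\A^n$ and use naturality of the multiplication maps to conclude that each summand of an isomorphism is itself an isomorphism. Your version spells out the verification of the free case and the naturality/additivity bookkeeping a bit more explicitly, but the underlying argument is identical to the paper's commuting-diagram proof.
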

\begin{proof}
Let $\Omega \F$ be another graded $\Omega \A$-bimodule satisfying $\Omega \E\oplus \Omega \F\cong \Omega \A^{ n}$. Then we have the following commuting diagram:
\[\begin{tikzcd}
\E\tensor_{ \A}\Omega^k \A\oplus \F \tensor_{\A}\Omega^k \A  \arrow[d,"\sim"]\arrow[rr] &  &  \Omega^k \E\oplus\Omega^k \F \arrow[d,"\sim"]\\
( \E\oplus \F)\tensor_{ \A}\Omega^k \A \arrow[r,"\sim"]&  \A^{ n}\tensor_{ \A}\Omega^k \A \arrow[r,"\sim"] & (\Omega^k \A)^{ n}.
\end{tikzcd}\]
Here the top arrow is the direct sum of the maps $ \E \tensor _{\A}\Omega^k \A\to \Omega^k \E$ and $ \F\tensor_{\A}\Omega^k \A\to \Omega^k \F$. All the other maps are isomorphisms, so these maps are isomorphisms as well. This shows that the first map in the lemma is an isomorphism and the second one follows analogously.
\end{proof}

From this we see that if $\Omega \A$ is graded commutative, then $\Omega \E$ is completely determined by just $\E$: it is the module $\Omega \A\tensor_{\A} \E$.

\begin{definition}
Let $\Omega \E$ be a graded fgp $\Omega \A$-bimodule. A \emph{connection} on $\Omega \E$ is a $\mathbb C$-linear map
\[\nabla:\Omega \E\to \Omega \E\]
of degree +1 satisfying the following equations for $\epsilon\in\Omega \E,\alpha\in \Omega \A$:
\begin{align}
  \label{eq:right-leibniz}
    \nabla(\epsilon\alpha)&=\nabla(\epsilon)\alpha+(-1)^{|\epsilon|}\epsilon d\alpha,\\
    \nabla(\alpha\epsilon)&=d\alpha\cdot\epsilon+(-1)^{|\alpha|}\alpha\nabla(\epsilon).
\end{align}
\end{definition}
\begin{remark}
The equations in this definition are called the right Leibniz rule and the left Leibniz rule, respectively. They are meaningful because the elements of $\Omega \E$ can be multiplied with elements of $\Omega \A$ both on the left and the right.
\end{remark}
\begin{definition}
Let $\Omega \E$ be a graded fgp $\Omega \A$-bimodule with a connection $\nabla$. The curvature of $\nabla$ is the map $\nabla^2:\Omega \E\to\Omega \E$ of degree +2.
\end{definition}
\begin{remark}
The curvature is an $\Omega \A$-bilinear map: for $\epsilon\in \Omega \E$ and $\alpha\in\Omega \A$ we have
\begin{align*}
\nabla^2(\epsilon\alpha)&=\nabla(\nabla(\epsilon)\alpha+(-1)^{|\epsilon|}\epsilon d\alpha)\\
&=\nabla^2(\epsilon)\alpha+(-1)^{|\epsilon|+1}\nabla(\epsilon)d\alpha+(-1)^{|\epsilon|}\nabla(\epsilon)d\alpha+\epsilon d^2\alpha\\
&=\nabla^2(\epsilon)\alpha
\end{align*}
and
\begin{align*}
\nabla^2(\alpha\epsilon)&=\nabla(d\alpha\cdot\epsilon+(-1)^{|\alpha|}\alpha\nabla(\epsilon))\\
&=d^2\alpha\cdot\epsilon+(-1)^{|\alpha|+1}d\alpha\nabla(\epsilon)+(-1)^{|\alpha|}d\alpha\nabla(\epsilon)+\alpha\nabla^2(\epsilon)\\
&=\alpha\nabla^2(\epsilon).
\end{align*}
\end{remark}
\begin{definition}
A connection $\nabla$ is called \emph{flat} if the curvature $\nabla^2$ is zero.
\end{definition}
Now we can define the category of flat connections.
\begin{definition}
We define $\C(\Omega \A)$ to be the category whose objects are graded fgp $\Omega \A$-bimodules with a connection, and whose morphisms are graded fgp $\Omega \A$-bimodule morphisms of degree 0 which commute with the connections. Let $\C_{\flat}(\Omega \A)$ be the full subcategory where the connections are required to be flat.
\end{definition}
\begin{example}
  \label{ex:manifold}
Let $M$ be a manifold without boundary. The corresponding dga is the de Rham differential algebra of the manifold $\Omega \A=\Omega M$. Any fgp $\Omega \A$-bimodule $\Omega \E$ is determined by the fgp $\A$-bimodule $\E$ by $\Omega \E=\Omega \A \tensor_{ \A} \E$. This corresponds to a vector bundle over $M$ by the Serre--Swan Theorem. A flat connection on $\E$ corresponds to a (usual) flat connection on this vector bundle. So $\C(\Omega M)$ is equivalent to the category of vector bundles over $M$ with a flat connection, which in turn is equivalent to the category of representations of $\pi_1(M)$ by \cite[Proposition I.2.5]{Kob87}.
\end{example}

\subsection{Connections over the graded centre}
We will now show that each graded fgp $\Omega \A$-bimodule is determined by a graded fgp bimodule over a graded commutative subalgebra. For this we need the following definitions:
\begin{definition}
Let $\Omega \A$ be a dga. We define the graded commutative centre $Z_g(\Omega \A)$ as
\[Z_g(\Omega \A)=\{\alpha\in \Omega \A\mid [\alpha,\nu]=0\text{ for all }\nu\in\Omega \A\}.\]
If $\Omega \E$ is an fgp $\Omega \A$-bimodule we define $Z_g(\Omega \E)$ as
\[Z_g(\Omega \E)=\{\epsilon\in \Omega \E\mid [\epsilon,\nu]=0\text{ for all }\nu\in\Omega \A\}.\]
\end{definition}
\begin{lemma}\label{lem-graded_centre}
With notations as in the previous definition we have the following, part of which is shown in \cite{DuboisVioletteGDA}:
\begin{enumerate}[(i)]
    \item The graded commutative centre of the algebra $Z_g(\Omega \A)$ is a dga.
    \item The graded commutative centre of the module $Z_g(\Omega \E)$ is a graded fgp bimodule over $Z_g(\Omega \A)$.
    \item Multiplication gives an isomorphism of graded bimodules $Z_g(\Omega \E)\tensor_{Z_g(\Omega \A)}\Omega \A\xrightarrow\sim\Omega \E$.
\end{enumerate}
\end{lemma}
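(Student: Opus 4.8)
The three parts share a common strategy: first establish that the assignment $\Omega\E\mapsto Z_g(\Omega\E)$ behaves well (it is closed under the relevant multiplications, additive on direct sums, and functorial for morphisms of $\Omega\A$-bimodules), and then reduce each claim to the trivial case $\Omega\E=\Omega\A^n$ by the direct-summand argument already used in the proof of Lemma~\ref{lem-omegakE}. For (i), $Z_g(\Omega\A)$ is clearly a graded subspace, and it is a subalgebra since $[\alpha\beta,\nu]=\alpha[\beta,\nu]\pm[\alpha,\nu]\beta$; any two of its elements graded-commute, so it is graded commutative. That $d$ preserves it follows from the identity $d[\alpha,\nu]=[d\alpha,\nu]+(-1)^{|\alpha|}[\alpha,d\nu]$, itself a direct consequence of the Leibniz rule: if $\alpha\in Z_g(\Omega\A)$ then $[\alpha,\nu]$ and $[\alpha,d\nu]$ both vanish for every $\nu$, hence $[d\alpha,\nu]=0$ for every $\nu$. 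Together with $d^2=0$ inherited from $\Omega\A$ this makes $(Z_g(\Omega\A),d)$ a graded commutative dga.

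For (ii), a short computation with the signs in the graded commutator shows that for $z\in Z_g(\Omega\A)$ and $\epsilon\in Z_g(\Omega\E)$ one has $z\epsilon=(-1)^{|z||\epsilon|}\epsilon z\in Z_g(\Omega\E)$, so $Z_g(\Omega\E)$ is a graded $Z_g(\Omega\A)$-sub-bimodule of $\Omega\E$. Next, $Z_g$ is additive, $Z_g(\Omega\E\oplus\Omega\F)=Z_g(\Omega\E)\oplus Z_g(\Omega\F)$, and $Z_g(\Omega\A^n)=Z_g(\Omega\A)^n$ for the free bimodule, because in both cases the graded commutator with $\nu\in\Omega\A$ is computed componentwise. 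Finally $Z_g$ is functorial: if $f\colon\Omega\E\to\Omega\G$ is a morphism of graded $\Omega\A$-bimodules then $[f(\epsilon),\nu]=f([\epsilon,\nu])$, so $f$ restricts to a $Z_g(\Omega\A)$-bimodule map $Z_g(\Omega\E)\to Z_g(\Omega\G)$, and an isomorphism restricts to an isomorphism. Applying this to a splitting $\Omega\E\oplus\Omega\F\cong\Omega\A^n$ gives $Z_g(\Omega\E)\oplus Z_g(\Omega\F)\cong Z_g(\Omega\A)^n$ as graded $Z_g(\Omega\A)$-bimodules, which is precisely the assertion that $Z_g(\Omega\E)$ is finitely generated projective over $Z_g(\Omega\A)$.

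For (iii), let $m_{\Omega\E}\colon Z_g(\Omega\E)\tensor_{Z_g(\Omega\A)}\Omega\A\to\Omega\E$ be the multiplication map; the source carries a natural graded $\Omega\A$-bimodule structure (the left action being $\beta\cdot(\epsilon\tensor\alpha)=(-1)^{|\beta||\epsilon|}\epsilon\tensor\beta\alpha$, well defined over the tensor product because elements of $Z_g(\Omega\A)$ are central and elements of $Z_g(\Omega\E)$ graded-commute with $\Omega\A$), for which $m_{\Omega\E}$ is tautologically a morphism of graded bimodules. For $\Omega\E=\Omega\A$ the map $m_{\Omega\A}$ is the canonical isomorphism $Z_g(\Omega\A)\tensor_{Z_g(\Omega\A)}\Omega\A\xrightarrow\sim\Omega\A$, hence $m_{\Omega\A^n}$ is an isomorphism as well. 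Now combine a splitting $\Omega\E\oplus\Omega\F\cong\Omega\A^n$ with the isomorphism of (ii) (after applying $-\tensor_{Z_g(\Omega\A)}\Omega\A$) to obtain a diagram
\[\begin{tikzcd}
(Z_g(\Omega \E)\oplus Z_g(\Omega \F))\tensor_{Z_g(\Omega \A)}\Omega \A \arrow[d,"\sim"]\arrow[r] & \Omega \E\oplus\Omega \F \arrow[d,"\sim"]\\
Z_g(\Omega \A)^{ n}\tensor_{Z_g(\Omega \A)}\Omega \A \arrow[r,"\sim"]& \Omega \A^{ n}
\end{tikzcd}\]
in which the top arrow is $m_{\Omega\E}\oplus m_{\Omega\F}$ and the square commutes because the fgp isomorphism, being $\Omega\A$-bilinear, intertwines the multiplication maps. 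Three of the four arrows are isomorphisms, hence so is the top arrow, and a direct sum of maps is an isomorphism only if each summand is; therefore $m_{\Omega\E}$ is an isomorphism.

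The only part requiring real attention is the bookkeeping of signs and of left/right actions: one must check that the restriction maps and the multiplication maps really are morphisms in the intended categories (graded $Z_g(\Omega\A)$-bimodules, resp. graded $\Omega\A$-bimodules) and that the $\Omega\A$-bimodule structure on $Z_g(\Omega\E)\tensor_{Z_g(\Omega\A)}\Omega\A$ used in (iii) is well defined over the tensor product. Once the conventions are pinned down, the structural input — additivity and functoriality of $Z_g$ together with the reduction to the free bimodule — is routine.
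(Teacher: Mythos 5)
Your proof is correct and follows essentially the same line as the paper's: the graded-commutator identities for (i) and (ii), and the reduction to the free bimodule $\Omega\A^n$ via a direct summand and a commuting diagram for the fgp statement and for (iii). The extra care you take in spelling out the $\Omega\A$-bimodule structure on $Z_g(\Omega\E)\tensor_{Z_g(\Omega\A)}\Omega\A$ (including the sign in the left action and its well-definedness over the balanced tensor product) is a welcome addition that the paper leaves implicit, but does not change the approach.
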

\begin{proof}
\begin{enumerate}[(i)]
    \item An easy calculation shows that $Z_g(\Omega \A)$ is a subalgebra of $\Omega \A$, using that $[\alpha\beta,\nu]=\alpha[\beta,\nu]+(-1)^{|\beta|\cdot|\nu|}[\alpha,\nu]\beta$ for $\alpha,\beta,\nu\in\Omega \A$. It is closed under $d$ because $[d\alpha,\nu]=d[\alpha,\nu]-(-1)^{|\alpha|}[\alpha,d\nu]$ for $\alpha,\nu\in\Omega \A$.
    \item An easy calculation shows that $Z_g(\Omega \E)$ is a graded bimodule over $Z_g(\Omega \A)$, using $[\alpha\epsilon,\nu]=\alpha[\epsilon,\nu]+(-1)^{|\epsilon|\cdot|\nu|}[\alpha,\nu]\epsilon$ for $\alpha,\nu\in\Omega \A,\epsilon\in\Omega \E$. If $\Omega \E\oplus \Omega \F=\Omega \A^{ n}$ it follows directly that $Z_g(\Omega \E)\oplus Z_g(\Omega \F)=(Z_g(\Omega \A))^{ n}$, so $Z_g(\Omega \E)$ is a graded fgp $Z_g(\Omega \A)$-bimodule.
    \item Let $\Omega \F$ be another graded fgp $\Omega \A$-bimodule satisfying $\Omega \E\oplus\Omega \F=\Omega \A^{ n}$. Then we have the following commuting diagram:
    \[\begin{tikzcd}[column sep=small]
    Z_g(\Omega \E)\tensor_{Z_g(\Omega \A)}\Omega \A\oplus Z_g(\Omega \F)\tensor_{Z_g(\Omega \A)}\Omega \A \arrow[r]\arrow[d,"\sim"]& \Omega \E\oplus \Omega \F \arrow[d,"\sim"] \\
     (Z_g(\Omega \E)\oplus Z_g(\Omega \F))\tensor_{Z_g(\Omega \A)}\Omega \A \arrow[d,"\sim"]&\Omega \A^{ n}\\
   Z_g(\Omega \E\oplus \Omega \F)\tensor_{Z_g(\Omega \A)}\Omega \A \arrow[r,"\sim"] & Z_g(\Omega \A^{ n})\tensor_{Z_g(\Omega \A)}\Omega \A. \arrow[u,"\sim"]
    \end{tikzcd}\]
    Here the top arrow is the direct sum of the maps $Z_g(\Omega \E)\tensor_{Z_g(\Omega \A)}\Omega \A\to \Omega \E$ and $Z_g(\Omega \F)\tensor_{Z_g(\Omega \A)}\Omega \A\to \Omega \F$ induced by multiplication. All other arrows in the diagram are isomorphisms, so these maps are isomorphisms as well.\qedhere
\end{enumerate}
\end{proof}
So all graded fgp $\Omega \A$-bimodules are determined by a graded fgp $Z_g(\Omega \A)$-bimodule. This is in turn determined by an fgp $Z_g(\Omega \A)$-module. Note that $Z_g(\A)$ may be smaller than the centre of the algebra $\A$.

\begin{theorem}\label{thm-reduction_to_commutative_case}
We have an equivalence of categories:
\[\C(\Omega \A)\xrightarrow\sim \C(Z_g(\Omega \A)),\]
sending an object $(\Omega \E,\nabla)$ to $Z_g(\Omega \E)$ with the restriction of $\nabla$. This equivalence restricts to an equivalence $\C_{\flat}(\Omega \A)\xrightarrow\sim \C_{\flat}(Z_g(\Omega \A))$.
\end{theorem}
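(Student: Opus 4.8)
The plan is to exhibit the functor in the statement, construct an explicit quasi-inverse, and check compatibility with connections; the conceptual content is entirely contained in Lemma~\ref{lem-graded_centre}, and everything else is Koszul-sign bookkeeping.

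First I would verify that $F\colon(\Omega\E,\nabla)\mapsto(Z_g(\Omega\E),\nabla|_{Z_g(\Omega\E)})$ is a well-defined functor $\C(\Omega\A)\to\C(Z_g(\Omega\A))$. By Lemma~\ref{lem-graded_centre}(i)--(ii), $Z_g(\Omega\A)$ is a dga and $Z_g(\Omega\E)$ is a graded fgp $Z_g(\Omega\A)$-bimodule, so it only remains to see that $\nabla$ maps $Z_g(\Omega\E)$ into itself and that the restriction satisfies the two Leibniz rules over $Z_g(\Omega\A)$. The latter is immediate from the defining equations. The former is the first genuine computation: for $\epsilon\in Z_g(\Omega\E)$ and $\nu\in\Omega\A$ one expands $\nabla(\epsilon\nu)=(-1)^{|\epsilon||\nu|}\nabla(\nu\epsilon)$ using both Leibniz rules, the two ``$d\nu$-terms'' cancel after using centrality of $\epsilon$, and what is left is precisely $[\nabla\epsilon,\nu]=0$. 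On morphisms one checks that a connection-preserving degree-$0$ bimodule map sends $Z_g(\Omega\E)$ into $Z_g(\Omega\F)$ (it commutes with the graded commutator), and restricting it clearly still preserves the connection.

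Next I would build the candidate quasi-inverse $G\colon\C(Z_g(\Omega\A))\to\C(\Omega\A)$, sending a graded fgp $Z_g(\Omega\A)$-bimodule with connection $(\Omega\E_0,\nabla_0)$ to $\Omega\E_0\tensor_{Z_g(\Omega\A)}\Omega\A$. One first puts an $\Omega\A$-bimodule structure on this tensor product: the right action is the obvious one, and centrality of $Z_g(\Omega\A)$ in $\Omega\A$ forces the left action to be $\beta\cdot(\epsilon\tensor\alpha)=(-1)^{|\beta||\epsilon|}\,\epsilon\tensor\beta\alpha$. Then one sets $\nabla(\epsilon\tensor\alpha)=\nabla_0(\epsilon)\tensor\alpha+(-1)^{|\epsilon|}\epsilon\tensor d\alpha$ and verifies that it is well defined over $Z_g(\Omega\A)$ (from the Leibniz rules for $\nabla_0$ and $d$) and satisfies the right and left Leibniz rules over $\Omega\A$; the left Leibniz rule is the most sign-intensive check but reduces to expanding both sides and matching three terms. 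On morphisms $G$ is $\phi_0\mapsto\phi_0\tensor\id$.

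Finally I would produce the two natural isomorphisms. For $F\circ G\cong\id$: if $\Omega\E_0$ is a summand of $Z_g(\Omega\A)^{n}$ then $\Omega\E_0\tensor_{Z_g(\Omega\A)}\Omega\A$ is a summand of $\Omega\A^{n}$, and since $Z_g(-)$ commutes with finite direct sums and $Z_g(\Omega\A^{n})=Z_g(\Omega\A)^{n}$, the evident inclusion $\Omega\E_0\tensor1\hookrightarrow Z_g(\Omega\E_0\tensor_{Z_g(\Omega\A)}\Omega\A)$ is forced to be an equality; under this identification the restriction of the extended connection is $\nabla_0$ because $d1=0$. For $G\circ F\cong\id$: Lemma~\ref{lem-graded_centre}(iii) gives the multiplication isomorphism $m\colon Z_g(\Omega\E)\tensor_{Z_g(\Omega\A)}\Omega\A\xrightarrow{\sim}\Omega\E$, which one checks is $\Omega\A$-bilinear (again by centrality of elements of $Z_g(\Omega\E)$) and intertwines the extended connection with $\nabla$ (this is exactly the right Leibniz rule for $\nabla$), and which is manifestly natural. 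Both $F$ and $G$ preserve flatness, since the curvature is $\Omega\A$-bilinear and hence determined by its restriction to the central submodule, so the equivalence restricts to $\C_{\flat}(\Omega\A)\xrightarrow{\sim}\C_{\flat}(Z_g(\Omega\A))$. I expect the main obstacle to be purely organisational: keeping track of all Koszul signs in the claim that $\nabla$ preserves the graded centre and in the left Leibniz rule for the extended connection; beyond Lemma~\ref{lem-graded_centre} there is no real difficulty.
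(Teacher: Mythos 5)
Your proposal is correct and follows essentially the same route as the paper: restrict $\nabla$ to $Z_g(\Omega\E)$ via the sign computation, extend by $\tilde\nabla(\zeta\tensor\alpha)=\nabla_0(\zeta)\tensor\alpha+(-1)^{|\zeta|}\zeta\tensor d\alpha$, and invoke Lemma~\ref{lem-graded_centre} for the module-level equivalence. You merely spell out some of the naturality and mutual-inverse checks (in particular the $F\circ G\cong\id$ argument via direct-sum compatibility of $Z_g$, and the explicit left-action sign) that the paper dismisses as ``easy to see.''
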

\begin{proof}
Let $(\Omega \E,\nabla)$ be an object of $\C(\Omega \A)$. For $\epsilon \in Z_g(\Omega \E)$ and $\alpha\in \Omega \A$ we have
\[\nabla(\epsilon\alpha)=\nabla(\epsilon)\alpha+(-1)^{|\epsilon|}\epsilon d\alpha\]
but also
\begin{align*}
    \nabla(\epsilon\alpha)&=(-1)^{|\epsilon|\cdot|\alpha|}\nabla(\alpha\epsilon)\\
    &=(-1)^{(|\epsilon|+1)|\alpha|}\alpha\nabla(\epsilon)+(-1)^{|\epsilon|\cdot|\alpha|}d\alpha\cdot \epsilon\\
    &=(-1)^{(|\epsilon|+1)|\alpha|}\alpha\nabla(\epsilon)+(-1)^{|\epsilon|}\epsilon d\alpha.
\end{align*}
So we get
\[\nabla(\epsilon)\alpha=(-1)^{(|\epsilon|+1)|\alpha|}\alpha\nabla(\epsilon).\]
Since this holds for all $\alpha\in \Omega \A$ we conclude that $\nabla(\epsilon)\in Z_g(\Omega \E)$. So $\nabla$ restricts to a function $Z_g(\Omega \E)\to Z_g(\Omega \E)$. Then $(Z_g(\Omega \E),\nabla_{|Z_g(\Omega \E)})$ is an object of $\C(Z_g(\Omega \A))$. It is easy to see that this is a functorial construction.

Conversely, let $(\Omega \F,\nabla)$ be an object of $\C(Z_g(\Omega \A))$. Then we can define the graded fgp $\Omega \A$-bimodule $\Omega \F\tensor_{Z_g(\Omega \A)}\Omega \A$, and the connection $\tilde\nabla$ given by
\[\tilde\nabla(\zeta\tensor\alpha)=\nabla(\zeta)\tensor \alpha+(-1)^{|\zeta|}\zeta\tensor d(\alpha)\]
for $\zeta\in \Omega \F,\alpha\in \Omega \A$. This gives an object of $\C(\Omega \A)$. It is then easy to show that this construction is also functorial, and that the two functors thus defined are inverse to each other.

Finally, we observe that the functor and its inverse preserve flatness.
\end{proof}

\begin{remark}
  \label{rem:reduce-to-0}
For a graded commutative dga $\Omega \A$ we have a different way to describe the category $\C(\Omega \A)$. Remember that for a graded fgp bimodule $\Omega \E$ we have the isomorphisms $\Omega \E=\E\tensor_\A\Omega \A$. The restriction of $\nabla$ to $\E$ is then a map $\nabla_0: \E\to \E\tensor_ \A\Omega^1 \A$, satisfying $\nabla_0(ea)=\nabla_0(e)a+e\tensor da$. Conversely each such $\nabla_0$ may be extended to $\nabla:\Omega \E\to\Omega \E$ by setting $\nabla(e\tensor \omega)=\nabla_0(e)\omega+e\tensor d\omega$ for $e\in\Omega \E,\omega\in\Omega \A$. So we can describe $\C(\Omega \A)$ as the category with objects fgp $ \A$-modules $ \E$ with a connection $\nabla_0: \E\to \E\tensor_{ \A}\Omega^1 \A$. 
\end{remark}

\begin{example}\label{ex-two_points_at_finite_distance}
  We consider the dga corresponding to the noncommutative space of two points at finite distance (see \cite{CL91} or \cite[pp 116--118]{Landi}). If is given by
  $$
  \Omega^{2k}  \A= 
  \left\{\begin{pmatrix}\alpha&0\\0&\delta\end{pmatrix}\in M_2(\mathbb C)\right\} ; \qquad
    \Omega^{2k+1} \A= 
      \left\{\begin{pmatrix}0&\beta\\ \gamma&0\end{pmatrix}\in M_2(\mathbb C)\right\} 
        $$
        for all $k \geq 0$. The differential is given by 
        $$
        \begin{aligned}
          d:\Omega^{2k} \A&\to \Omega^{2k+1} \A\\
          \begin{pmatrix}\alpha&0\\0&\delta\end{pmatrix}&\mapsto \begin{pmatrix}0&\delta-\alpha\\ \alpha-\delta&0\end{pmatrix}\end{aligned},\qquad 
        \begin{aligned} 
          d:\Omega^{2k+1} \A &\to \Omega^{2k+2} \A\\
          \begin{pmatrix}0&\beta\\ \gamma&0\end{pmatrix}&\mapsto \begin{pmatrix}\beta+\gamma & 0\\ 0&\beta+\gamma\end{pmatrix}.
        \end{aligned}
        $$
We have $Z_g(\Omega ^{2k}\A)=\mathbb C$ and $Z_g(\Omega^{2k+1}\A)=0$ so that in view of Remark \ref{rem:reduce-to-0} the fgp bimodules over $Z_g(\Omega \A)$ are simply determined by a vector space over $\mathbb C$. We conclude that $\C(\Omega \A)\cong \C(Z_g(\Omega \A))$ is equivalent to the category of vector spaces.
\end{example}

\subsection{Tensor products}
In this subsection we will construct the tensor product for the category $\C(\Omega \A)$. 
\begin{proposition}
Let $(\Omega \E,\nabla^\E)$ and $(\Omega \F,\nabla^\F)$ be objects of $\C(\Omega \A)$. Then $\Omega\G=\Omega \E\tensor_{\Omega \A}\Omega \F$ has the structure of an fgp graded bimodule over $\Omega \A$, and we can construct a connection $\nabla^\G$ satisfying
\begin{equation}
  \label{eq:tensor-conn}
  \nabla^\G(\epsilon\tensor\zeta)=\nabla^\E(\epsilon)\tensor\zeta+(-1)^{|\epsilon|}\epsilon\tensor\nabla^\F(\zeta)
  \end{equation}
for $\epsilon\in\Omega \E,\zeta\in\Omega \F$.
\end{proposition}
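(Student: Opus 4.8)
The plan is to treat the two assertions separately. First I would equip $\Omega\G=\Omega\E\tensor_{\Omega\A}\Omega\F$ with its natural grading — the relations defining $\tensor_{\Omega\A}$ over the graded algebra $\Omega\A$ are homogeneous, so the gradings of $\Omega\E$ and $\Omega\F$ descend — and with the $\Omega\A$-bimodule structure whose left action is through $\Omega\E$ and whose right action is through $\Omega\F$. To see that $\Omega\G$ is fgp, I would choose graded $\Omega\A$-bimodules $\Omega\E',\Omega\F'$ with $\Omega\E\oplus\Omega\E'\cong\Omega\A^m$ and $\Omega\F\oplus\Omega\F'\cong\Omega\A^n$, tensor these isomorphisms over $\Omega\A$, and use that $\tensor_{\Omega\A}$ commutes with finite direct sums together with $\Omega\A\tensor_{\Omega\A}\Omega\A\cong\Omega\A$. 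This exhibits $\Omega\G$ as a direct summand of $\Omega\A^{mn}$, the remaining three summands being $\Omega\E\tensor_{\Omega\A}\Omega\F'$, $\Omega\E'\tensor_{\Omega\A}\Omega\F$ and $\Omega\E'\tensor_{\Omega\A}\Omega\F'$, hence $\Omega\G$ is fgp.

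For the connection, I would define a map $\Omega\E\times\Omega\F\to\Omega\G$ on homogeneous elements by $(\epsilon,\zeta)\mapsto\nabla^\E(\epsilon)\tensor\zeta+(-1)^{|\epsilon|}\epsilon\tensor\nabla^\F(\zeta)$ and extend it biadditively. It is manifestly $\mathbb C$-bilinear and of degree $+1$; the one nontrivial point is that it is $\Omega\A$-balanced, i.e.\ takes the same value on $(\epsilon\alpha,\zeta)$ and $(\epsilon,\alpha\zeta)$ for $\alpha\in\Omega\A$. I would check this by expanding $\nabla^\E(\epsilon\alpha)$ with the right Leibniz rule \eqref{eq:right-leibniz} and $\nabla^\F(\alpha\zeta)$ with the left Leibniz rule, and then moving the factors $\alpha$ and $d\alpha$ across the tensor sign using the relations of $\tensor_{\Omega\A}$; both expressions collapse to
\[
\nabla^\E(\epsilon)\tensor\alpha\zeta+(-1)^{|\epsilon|}\epsilon\tensor d\alpha\cdot\zeta+(-1)^{|\epsilon|+|\alpha|}\epsilon\tensor\alpha\nabla^\F(\zeta).
\]
By the universal property of the tensor product over $\Omega\A$ the map then descends to a well-defined $\mathbb C$-linear $\nabla^\G:\Omega\G\to\Omega\G$ of degree $+1$ satisfying \eqref{eq:tensor-conn}.

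Finally I would verify the two Leibniz rules for $\nabla^\G$, which is a routine computation on elementary tensors: for $\nabla^\G((\epsilon\tensor\zeta)\alpha)=\nabla^\G(\epsilon\tensor\zeta\alpha)$ one applies the right Leibniz rule for $\nabla^\F$, and for $\nabla^\G(\alpha(\epsilon\tensor\zeta))=\nabla^\G((\alpha\epsilon)\tensor\zeta)$ one applies the left Leibniz rule for $\nabla^\E$; in each case the sign $(-1)^{|\epsilon|}$ in \eqref{eq:tensor-conn} recombines with the Leibniz signs of $\nabla^\E,\nabla^\F$ to produce precisely the sign $(-1)^{|\epsilon|+|\zeta|}$ demanded by the definition of a connection on the element $\epsilon\tensor\zeta$. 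I expect the balancing check in the second paragraph to be the only real obstacle: the two ``obvious'' pieces $\nabla^\E\tensor\id$ and $\id\tensor\nabla^\F$ are each ill-defined on $\Omega\G$, and it is exactly the signed combination in \eqref{eq:tensor-conn} — with the Leibniz rules of $\nabla^\E$ and $\nabla^\F$ conspiring — that respects the relations of $\tensor_{\Omega\A}$; everything after that is bookkeeping of signs.
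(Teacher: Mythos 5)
Your proposal is correct and follows essentially the same route as the paper: establish the bimodule structure, grading, and fgp property by tensoring the two split embeddings into free modules, then define $\nabla^\G$ by the formula on elementary tensors, check it respects the $\tensor_{\Omega\A}$-relations, and verify the Leibniz rules. The only cosmetic differences are that you organise the fgp step symmetrically (writing $\Omega\G$ as one of the four summands of $(\Omega\E\oplus\Omega\E')\tensor_{\Omega\A}(\Omega\F\oplus\Omega\F')\cong\Omega\A^{mn}$, whereas the paper tensors one decomposition at a time), and that you phrase well-definedness explicitly through the universal property of the balanced tensor product — a slightly more rigorous framing of the same computation the paper performs.
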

\begin{proof}
  The left action of $\Omega \A$ on $\Omega \E$ and the right action of $\Omega \A$ on $\Omega \F$ make $\Omega\G$ into a $\Omega \A$-bimodule. A grading on the tensor product is given as follows: for any $r\geq 0$ the degree $r$ subspace $\Omega^r\G$ is the linear span of elements $\epsilon\tensor\zeta$, with $\epsilon\in\Omega^k \E,\zeta\in \Omega ^l\F,k+l=r$. To show that $\Omega\G$ is fgp, suppose that $\Omega \E\oplus \Omega \E'\cong \Omega \A^{ m}$ and $\Omega \F\oplus\Omega \F'\cong\Omega \A^{ n}$. Then
 \[\Omega\G\oplus\Omega \E'\tensor_{\Omega \A}\Omega \F\oplus (\Omega \F')^{ m}\cong (\Omega \F)^{ m} \oplus(\Omega \F')^{ m}\cong (\Omega \A)^{ mn}.\]
So $\Omega\G$ is a graded fgp $\Omega \A$-bimodule.

We can then define the connection $\nabla^\G$ by Equation \eqref{eq:tensor-conn} on pure tensors, and extend $\mathbb C$-linearly.
To show that it is well-defined, let $\alpha\in \Omega \A$. Then by the above definition we have
\begin{align*}
\nabla^\G(\epsilon\alpha\tensor\zeta)&=\nabla^\E(\epsilon\alpha)\tensor\zeta+(-1)^{|\epsilon|+|\alpha|}\epsilon\alpha\tensor\nabla^\F(\zeta)\\
&=\nabla^\E(\epsilon)\alpha\tensor\zeta+(-1)^{|\epsilon|}\epsilon d\alpha\tensor\zeta+(-1)^{|\epsilon|+|\alpha|}\epsilon\alpha\tensor\nabla^\F(\zeta)
\end{align*}while
\begin{align*}
\nabla^\G(\epsilon\tensor\alpha\zeta)&=\nabla^\E(\epsilon)\tensor\alpha\zeta+(-1)^{|\epsilon|}\epsilon\tensor\nabla^\F(\alpha\zeta)\\
&=\nabla^\E(\epsilon)\tensor\alpha\zeta+(-1)^{|\epsilon|}\epsilon\tensor d\alpha\cdot \zeta+(-1)^{|\epsilon|+|\alpha|}\epsilon\tensor\alpha\nabla^\F(\zeta).
\end{align*}
Since these are the same, $\nabla^\G$ is well-defined.

Lastly, $\nabla^\G$ satisfies the Leibniz rules: for $\epsilon\in \Omega \E,\zeta\in\Omega \F,\alpha\in\Omega \A$ we have
\begin{align*}
\nabla^\G(\alpha\epsilon\tensor\zeta)&=\nabla^\E(\alpha\epsilon)\tensor\zeta+(-1)^{|\epsilon|+|\alpha|}\alpha\epsilon\tensor\nabla^\F(\zeta)\\
&=d\alpha\cdot\epsilon\tensor\zeta+(-1)^{|\alpha|}\alpha\nabla^\E(\epsilon)\tensor\zeta+(-1)^{|\epsilon|+|\alpha|}\alpha\epsilon\tensor\nabla^\F(\zeta)\\
&=d\alpha\cdot\epsilon\tensor\zeta+(-1)^{|\alpha|}\alpha\nabla^\G(\epsilon\tensor \zeta)
\end{align*}
and
\begin{align*}
\nabla^\G(\epsilon\tensor\zeta\alpha)&=\nabla^\E(\epsilon)\tensor\zeta\alpha+(-1)^{|\epsilon|}\epsilon\tensor\nabla^\G(\zeta\alpha)\\
&=\nabla^\E(\epsilon)\tensor\zeta\alpha+(-1)^{|\epsilon|}\epsilon\tensor\nabla^\G(\zeta)\alpha+(-1)^{|\epsilon|+|\zeta|}\epsilon\tensor\zeta d\alpha\\
&=\nabla^\G(\epsilon\tensor\zeta)\alpha+(-1)^{|\epsilon|+|\zeta|}\epsilon\tensor\zeta d\alpha.\qedhere
\end{align*}
\end{proof}
The curvature on the tensor product is easily calculated:
\begin{lemma}\label{lem-curvature_tensor_product}
In the notation of the previous lemma, we have
\[(\nabla^\G)^2=(\nabla^\E)^2\tensor\Omega \F\oplus\Omega \E\tensor(\nabla^\F)^2.\]
In particular, the tensor product of flat connections is again flat.
\end{lemma}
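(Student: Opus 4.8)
The plan is to compute $(\nabla^\G)^2$ directly on pure tensors by applying the product rule \eqref{eq:tensor-conn} twice and keeping track of the signs; the two "cross" terms will turn out to cancel, leaving exactly the asserted sum. Before carrying out the computation I would first check that the right-hand side $(\nabla^\E)^2\tensor\Omega \F\oplus\Omega \E\tensor(\nabla^\F)^2$ genuinely defines a $\mathbb C$-linear endomorphism of $\Omega\G=\Omega \E\tensor_{\Omega \A}\Omega \F$ of degree $+2$. The point here is that the formula $\epsilon\tensor\zeta\mapsto(\nabla^\E)^2(\epsilon)\tensor\zeta$ must respect the balancing relation $\epsilon\alpha\tensor\zeta=\epsilon\tensor\alpha\zeta$, and this is precisely the content of the $\Omega \A$-bilinearity of the curvature already recorded in the remark following the definition of curvature: $(\nabla^\E)^2(\epsilon\alpha)=(\nabla^\E)^2(\epsilon)\alpha$ gives well-definedness over $\Omega \A$, and similarly for the second summand.

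For the computation itself, fix $\epsilon\in\Omega \E$ homogeneous and $\zeta\in\Omega \F$. Applying $\nabla^\G$ to $\nabla^\G(\epsilon\tensor\zeta)=\nabla^\E(\epsilon)\tensor\zeta+(-1)^{|\epsilon|}\epsilon\tensor\nabla^\F(\zeta)$, and using that $\nabla^\E(\epsilon)$ has degree $|\epsilon|+1$, I get
\begin{align*}
(\nabla^\G)^2(\epsilon\tensor\zeta)
&=\nabla^\G\bigl(\nabla^\E(\epsilon)\tensor\zeta\bigr)+(-1)^{|\epsilon|}\nabla^\G\bigl(\epsilon\tensor\nabla^\F(\zeta)\bigr)\\
&=(\nabla^\E)^2(\epsilon)\tensor\zeta+(-1)^{|\epsilon|+1}\nabla^\E(\epsilon)\tensor\nabla^\F(\zeta)\\
&\quad+(-1)^{|\epsilon|}\nabla^\E(\epsilon)\tensor\nabla^\F(\zeta)+\epsilon\tensor(\nabla^\F)^2(\zeta).
\end{align*}
Since $(-1)^{|\epsilon|+1}+(-1)^{|\epsilon|}=0$, the two middle terms cancel, so $(\nabla^\G)^2(\epsilon\tensor\zeta)=(\nabla^\E)^2(\epsilon)\tensor\zeta+\epsilon\tensor(\nabla^\F)^2(\zeta)$. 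As pure tensors span $\Omega\G$ and both sides are $\mathbb C$-linear, this proves the identity of operators. Setting $(\nabla^\E)^2=0$ and $(\nabla^\F)^2=0$ then immediately gives $(\nabla^\G)^2=0$, which is the "in particular" statement.

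I do not expect a real obstacle: once one has \eqref{eq:tensor-conn}, the whole argument is a short exercise in sign bookkeeping. The only step worth a sentence of care is the well-definedness of the right-hand side as a map on the balanced tensor product $\Omega \E\tensor_{\Omega \A}\Omega \F$ rather than $\Omega \E\tensor_{\mathbb C}\Omega \F$, and, as noted, this follows from the $\Omega \A$-bilinearity of curvature established earlier.
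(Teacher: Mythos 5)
Your proof is correct and follows essentially the same route as the paper: apply the defining formula \eqref{eq:tensor-conn} twice and observe the two cross terms cancel by sign. The extra sentence on well-definedness of the right-hand side over the balanced tensor product is a sensible (if brief) addition; also note you use the sign $(-1)^{|\epsilon|}$ consistently with \eqref{eq:tensor-conn}, whereas the paper's displayed proof writes $(-1)^{|\zeta|}$, which appears to be a typo but does not affect the cancellation.
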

\begin{proof}
For $\epsilon\in\Omega \E,\zeta\in\Omega \F$ we have
\begin{align*}
&    (\nabla^\G)^2(\epsilon\tensor\zeta)=\nabla^\G(\nabla^\E(\epsilon)\tensor\zeta+(-1)^{|\zeta|}\epsilon\tensor\nabla^\F(\zeta))\\
    &\qquad =(\nabla^\E)^2(\epsilon)\tensor\zeta+(-1)^{|\zeta|+1}\nabla^\E(\epsilon)\tensor\nabla^\F(\zeta)+(-1)^{|\zeta|}\nabla^\E(\epsilon)\tensor\nabla^\F(\zeta)+\epsilon\tensor(\nabla^\F)^2(\zeta)\\
    & \qquad =(\nabla^\E)^2(\epsilon)\tensor\zeta+\epsilon\tensor(\nabla^\F)^2(\zeta).\qedhere
\end{align*}
\end{proof}
\begin{remark}
The above lemma does not apply for some other definitions of connections on bimodules, see for instance \cite[Example 2.13]{KLS08}.
\end{remark}

It is easy to see that this tensor product is associative. The tensor product commutes with the equivalence of categories $\C(\Omega \A)\to \C(Z_g(\Omega \A))$ from Theorem \ref{thm-reduction_to_commutative_case}. In the commutative case it is easy to see that the tensor product is also commutative; so we have a commutativity constraint in the general case as well.

There is a unit in $\C(\Omega \A)$: it is the bimodule $\Omega \A$ with the connection $d$. It is easy to see that the isomorphism $ \Omega \E\to \Omega \E\tensor_{\Omega \A}\Omega \A$ intertwines the connection $\nabla^\E$ with the tensor product connection on $\Omega \E\tensor_{\Omega \A} \Omega \A$.

This makes $(\C(\Omega \A),\tensor)$ into a tensor category, and the same applies to the subcategory $(\C_{\flat}(\Omega \A),\otimes)$.

\subsection{Duals}
We will now construct dual objects in the category $\C(\Omega \A)$.

\begin{proposition}
Let $(\Omega \E,\nabla)$ be an object of $\C(\Omega \A)$. Then the dual module $\Omega \E^\vee=\Hom_{\Omega \A}(\Omega \E,\Omega \A)$ of right-$\Omega \A$-linear maps from $\Omega \E$ to $\Omega \A$ is an fgp $\Omega \A$-bimodule. Moreover, there is a connection on $\Omega \E^\vee$ satisfying
\begin{equation}
  \label{eq:dual-conn}
  \langle\nabla^\vee(\theta),\epsilon\rangle=d\langle\theta,\epsilon\rangle-(-1)^{|\theta|}\langle\theta,\nabla\epsilon\rangle
  \end{equation}
for $\theta\in\Omega \E^\vee,\epsilon\in\Omega \E$. Here the angled brackets denote the pairing between $\Omega \E^\vee$ and $\Omega \E$.
\end{proposition}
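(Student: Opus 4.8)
The plan is to treat the two assertions separately: first that $\Omega \E^\vee$ carries the structure of a graded fgp $\Omega \A$-bimodule, and then that the prescription \eqref{eq:dual-conn} defines a connection on it.

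For the bimodule structure I would equip $\Omega \E^\vee=\Hom_{\Omega \A}(\Omega \E,\Omega \A)$ with the grading in which $\theta$ has degree $p$ when it sends $\Omega^k\E$ into $\Omega^{k+p}\A$, and with the $\Omega \A$-actions determined on the pairing by $\langle\alpha\theta,\epsilon\rangle=\alpha\langle\theta,\epsilon\rangle$ and $\langle\theta\alpha,\epsilon\rangle=\langle\theta,\alpha\epsilon\rangle$ for $\alpha\in\Omega \A$. A short check — which I would include — shows that $\alpha\theta$ and $\theta\alpha$ are again right-$\Omega \A$-linear, that these two actions commute, and that $|\alpha\theta|=|\alpha|+|\theta|=|\theta\alpha|$, so $\Omega \E^\vee$ is a graded $\Omega \A$-bimodule (and the evaluation pairing descends to $\Omega \E^\vee\tensor_{\Omega \A}\Omega \E\to\Omega \A$). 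To see it is fgp I would use that $\Hom_{\Omega \A}(-,\Omega \A)$ is additive: picking $\Omega \F$ with $\Omega \E\oplus\Omega \F\cong\Omega \A^{n}$ and applying it yields $\Omega \E^\vee\oplus\Omega \F^\vee\cong\Hom_{\Omega \A}(\Omega \A^{n},\Omega \A)\cong\Omega \A^{n}$ as graded bimodules, the last isomorphism being the standard identification (evaluation on the degree-zero standard basis), which one verifies is degree-preserving and bimodule-linear. Hence $\Omega \E^\vee$ is a direct summand of a finite free graded bimodule, i.e.\ fgp.

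For the connection I would simply \emph{define} $\nabla^\vee\theta$ by $\langle\nabla^\vee\theta,\epsilon\rangle:=d\langle\theta,\epsilon\rangle-(-1)^{|\theta|}\langle\theta,\nabla\epsilon\rangle$ for all $\epsilon\in\Omega \E$, and then verify it is a connection. First, that $\epsilon\mapsto\langle\nabla^\vee\theta,\epsilon\rangle$ is right-$\Omega \A$-linear, hence actually an element of $\Omega \E^\vee$: expanding $\langle\nabla^\vee\theta,\epsilon\alpha\rangle$ using right-linearity of $\theta$, the Leibniz rule for $d$, and the right Leibniz rule \eqref{eq:right-leibniz} for $\nabla$, the two terms containing $d\alpha$ cancel and one is left with $\langle\nabla^\vee\theta,\epsilon\rangle\alpha$. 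That $\nabla^\vee$ is $\mathbb C$-linear and of degree $+1$ is immediate from the formula and a degree count. It remains to check the two Leibniz rules: pairing $\nabla^\vee(\theta\alpha)$ and $\nabla^\vee(\alpha\theta)$ against an arbitrary $\epsilon$, unwinding the definitions of the two actions on $\Omega \E^\vee$ and of $\nabla^\vee$, and using the left and right Leibniz rules for $\nabla$ together with the Leibniz rule for $d$, one recovers precisely $\langle\nabla^\vee(\theta)\alpha+(-1)^{|\theta|}\theta\,d\alpha,\epsilon\rangle$ and $\langle d\alpha\cdot\theta+(-1)^{|\alpha|}\alpha\,\nabla^\vee(\theta),\epsilon\rangle$ respectively; since $\epsilon$ is arbitrary and an element of $\Omega \E^\vee$ is determined by its values, the two identities follow.

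I do not expect a serious obstacle; the argument is essentially bookkeeping with Koszul signs. The one point requiring genuine care — and where I would be most deliberate in the write-up — is fixing the graded $\Omega \A$-bimodule structure on $\Omega \E^\vee$ and the sign conventions so that \eqref{eq:dual-conn} is simultaneously compatible with right-$\Omega \A$-linearity of $\nabla^\vee\theta$ and with \emph{both} Leibniz rules; a different convention for the left/right action on $\Hom_{\Omega \A}(\Omega \E,\Omega \A)$ would force a different sign in \eqref{eq:dual-conn}. A secondary subtlety is the claim that $\Omega \E^\vee$ is fgp: this uses that $\Omega \E$ is a summand of a \emph{finite} free module, so that $\Hom_{\Omega \A}(-,\Omega \A)$ commutes with the direct sum and the dual of $\Omega \A^{n}$ is again graded free of finite rank.
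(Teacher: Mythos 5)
Your proposal is correct and follows essentially the same route as the paper: you fix the bimodule structure on $\Omega\E^\vee$ by $\langle\alpha\theta,\epsilon\rangle=\alpha\langle\theta,\epsilon\rangle$, $\langle\theta\alpha,\epsilon\rangle=\langle\theta,\alpha\epsilon\rangle$, deduce fgp by applying $\Hom_{\Omega\A}(-,\Omega\A)$ to $\Omega\E\oplus\Omega\F\cong\Omega\A^n$, define $\nabla^\vee$ by \eqref{eq:dual-conn}, check well-definedness (right-$\Omega\A$-linearity of $\langle\nabla^\vee\theta,-\rangle$, with the $d\alpha$ terms cancelling), and then verify both Leibniz rules. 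The only thing worth adding in a full write-up is the explicit sign bookkeeping in the Leibniz checks, but your sketch correctly identifies all the points where that bookkeeping is done and where it could go wrong.
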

\begin{proof}
  The bimodule structure is given by $\langle\theta\alpha,\epsilon\rangle=\langle\theta,\alpha\epsilon\rangle$ and $\langle\alpha\theta,\epsilon\rangle=\alpha\langle\theta,\epsilon\rangle$ for $\theta\in\Omega \E^\vee,\alpha\in\Omega \A,\epsilon\in\Omega \E$. There is a natural grading on $\Omega \E^\vee$ where in degree $k$ we find 
  the homogeneous maps of degree $k$. If $\Omega \E\oplus\Omega \F\cong \Omega \A^{ n}$ we get $\Omega \E^\vee\oplus\Omega \F^\vee=(\Omega \E\oplus\Omega \F)^\vee\cong \Omega \A^{ n}$. So $\Omega \E^\vee$ is a graded fgp $\Omega \A$-bimodule.

We can define the connection $\nabla^\vee$ on $\Omega \E^\vee$ by Equation \eqref{eq:dual-conn}.
This is well-defined because $\langle\nabla^\vee(\theta),-\rangle$ is indeed a right-linear map with this definition: for $\theta\in \Omega \E^\vee,\epsilon\in\Omega \E,\alpha\in\Omega \A$:
\begin{align*}
\langle\nabla^\vee(\theta),\epsilon\alpha\rangle&=d\langle\theta,\epsilon\alpha\rangle-(-1)^{|\theta|}\langle\theta,\nabla(\epsilon\alpha)\rangle\\
&=d(\langle\theta,\epsilon\rangle\alpha)-(-1)^{|\theta|}\langle\theta,\nabla(\epsilon)\alpha+(-1)^{|\theta|}\epsilon d\alpha\rangle\\
&=d\langle\theta,\epsilon\rangle\alpha+(-1)^{|\theta|+|\epsilon|}\langle\theta,\epsilon\rangle d\alpha-(-1)^{|\theta|}\langle\theta,\nabla(\epsilon)\rangle\alpha-(-1)^{|\theta|+|\epsilon|}\langle\theta,\epsilon\rangle d\alpha\\
&=\langle\nabla^\vee(\theta),\epsilon\rangle\alpha.
\end{align*}This satisfies the Leibniz rules: for $\theta\in\Omega \E^\vee,\epsilon\in\Omega \E,\alpha\in\Omega \A$ we have
\begin{align*}
\langle\nabla^\vee(\theta\alpha),\epsilon\rangle&=d\langle\theta\alpha,\epsilon\rangle-(-1)^{|\theta|+|\alpha|}\langle\theta\alpha,\nabla(\epsilon)\rangle\\
&=d\langle\theta,\alpha\epsilon\rangle-(-1)^{|\theta|+|\alpha|}\langle\theta,\alpha\nabla(\epsilon)\rangle\\
&=d\langle\theta,\alpha\epsilon\rangle-(-1)^{|\theta|+|\alpha|}\langle\theta,\nabla(\alpha\epsilon)\rangle+(-1)^{|\theta|}\langle\theta,d\alpha\cdot\epsilon\rangle\\
&=\langle\nabla^\vee(\theta),\alpha\epsilon\rangle+(-1)^{|\theta|}\langle\theta d\alpha,\epsilon\rangle\\
&=\langle\nabla^\vee(\theta)\alpha+(-1)^{|\theta|}\theta d\alpha,\epsilon\rangle
\end{align*}
and
\begin{align*}
\langle\nabla^\vee(\alpha\theta),\epsilon\rangle&=d\langle\alpha\theta,\epsilon\rangle-(-1)^{|\theta|+|\alpha|}\langle\alpha\theta,\nabla(\epsilon)\rangle\\
&=d\alpha\langle\theta,\epsilon\rangle+(-1)^{|\alpha|}\alpha d\langle\theta,\epsilon\rangle-(-1)^{|\theta|+|\alpha|}\alpha\langle\theta,\nabla(\epsilon)\rangle\\
&=\langle d\alpha\cdot\theta+(-1)^{|\alpha|}\alpha\nabla^\vee(\theta),\epsilon\rangle.\qedhere
\end{align*}\end{proof}

We can compute the curvature of the dual connection:
\begin{lemma}
In the notation of the previous lemma, the curvature of $\nabla^\vee$ is minus the dual of the curvature of $\nabla$, that is, for $\theta\in\Omega \E^\vee,\epsilon\in\Omega \E$ we have
\[\langle(\nabla^\vee)^2(\theta),\epsilon\rangle=-\langle\theta,\nabla^2(\epsilon)\rangle.\]
In particular, the dual of a flat connection is again flat.
\end{lemma}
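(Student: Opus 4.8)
The plan is to compute $\langle(\nabla^\vee)^2(\theta),\epsilon\rangle$ by applying the defining relation \eqref{eq:dual-conn} twice and using $d^2=0$. We may assume $\theta$ and $\epsilon$ homogeneous. Since $\nabla^\vee(\theta)$ has degree $|\theta|+1$, relation \eqref{eq:dual-conn} applied to $\nabla^\vee(\theta)$ in place of $\theta$ gives
\[
\langle(\nabla^\vee)^2(\theta),\epsilon\rangle = d\langle\nabla^\vee(\theta),\epsilon\rangle-(-1)^{|\theta|+1}\langle\nabla^\vee(\theta),\nabla\epsilon\rangle .
\]
I would then expand each term on the right using \eqref{eq:dual-conn} once more. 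For the first term, $d\langle\nabla^\vee(\theta),\epsilon\rangle=d\bigl(d\langle\theta,\epsilon\rangle-(-1)^{|\theta|}\langle\theta,\nabla\epsilon\rangle\bigr)=-(-1)^{|\theta|}d\langle\theta,\nabla\epsilon\rangle$, since $d^2=0$. For the second term, feeding $\nabla\epsilon$ into \eqref{eq:dual-conn} yields $\langle\nabla^\vee(\theta),\nabla\epsilon\rangle=d\langle\theta,\nabla\epsilon\rangle-(-1)^{|\theta|}\langle\theta,\nabla^2\epsilon\rangle$.

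Substituting both expansions back, the two contributions proportional to $d\langle\theta,\nabla\epsilon\rangle$ cancel, and collecting the remaining signs (using $(-1)^{|\theta|+1}(-1)^{|\theta|}=-1$) leaves precisely $\langle(\nabla^\vee)^2(\theta),\epsilon\rangle=-\langle\theta,\nabla^2(\epsilon)\rangle$, as claimed. The only point requiring care is the sign bookkeeping: one must remember that the sign $(-1)^{|\theta|}$ in \eqref{eq:dual-conn} depends on the degree of the first argument only, so that substituting $\nabla\epsilon$ (of degree $|\epsilon|+1$) for the second argument introduces no extra sign; beyond that the computation is entirely formal.

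For the final assertion, suppose $\nabla$ is flat, i.e. $\nabla^2=0$. Then the identity just proved gives $\langle(\nabla^\vee)^2(\theta),\epsilon\rangle=0$ for every $\epsilon\in\Omega\E$. Since $\Omega\E$ is a graded fgp $\Omega\A$-bimodule, the evaluation pairing $\Omega\E^\vee\times\Omega\E\to\Omega\A$ is nondegenerate (indeed $(\Omega\E^\vee)^\vee\cong\Omega\E$ for a summand of a free module), so $(\nabla^\vee)^2(\theta)=0$ for all $\theta\in\Omega\E^\vee$; hence $\nabla^\vee$ is flat. I do not anticipate any genuine obstacle here — the argument is a direct two-step unwinding of the definition — so the write-up will mostly be a matter of displaying the chain of equalities cleanly.
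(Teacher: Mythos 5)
Your computation is correct and follows essentially the same two-step unwinding of Equation \eqref{eq:dual-conn} with $d^2=0$ that the paper uses, with the same sign bookkeeping. The only superfluous detail is the final appeal to nondegeneracy of the pairing: since $(\nabla^\vee)^2(\theta)$ is by construction an element of $\Omega\E^\vee=\Hom_{\Omega\A}(\Omega\E,\Omega\A)$ and $\langle\cdot,\cdot\rangle$ is evaluation, $\langle(\nabla^\vee)^2(\theta),\epsilon\rangle=0$ for all $\epsilon$ means $(\nabla^\vee)^2(\theta)=0$ by definition, so no biduality argument is needed.
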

\begin{proof}
We have for $\theta\in\Omega \E^\vee,\epsilon\in\Omega \E$:
\begin{align*}
    \langle((\nabla^\vee)^2(\theta),\epsilon\rangle&=d\langle\nabla^\vee(\theta),\epsilon\rangle-(-1)^{|\theta|+1}\langle\nabla^\vee(\theta),\nabla(\epsilon)\rangle\\
    &=d(d\langle\theta,\epsilon\rangle-(-1)^{|\theta|}\langle\theta,\nabla(\epsilon)\rangle)-(-1)^{|\theta|+1}d\langle\theta,\nabla(\epsilon)\rangle-\langle\theta,\nabla^2(\epsilon)\rangle\\
    &=-\langle\theta,\nabla^2(\epsilon)\rangle.\qedhere
\end{align*}
\end{proof}
Writing $\Omega \E=Z_g(\Omega \E)\tensor_{Z_g(\Omega \A)}\Omega \A$ we have
\[\Omega \E^\vee=\Hom_{\Omega \A}(Z_g(\Omega \E)\tensor_{Z_g(\Omega \A)}\Omega \A,\Omega \A)=Z_g(\Omega \E)^\vee\tensor_{Z_g(\Omega \A)}\Omega \A.\]
So the equivalence of categories $\C(\Omega \A)\to\C(Z_g(\Omega \A))$ commutes with the taking of duals. In particular this shows that the dual $\Omega \E^\vee=\Hom_{\Omega \A}(\Omega \E,\Omega \A)$ is naturally isomorphic to the space of left-linear functions $_{\Omega \A}\Hom(\Omega \E,\Omega \A)$.

Since $\Omega \E$ is an fgp bimodule, we have for each graded fgp bimodule $\Omega \F$ an isomorphism $\Hom(\Omega \F\tensor\Omega \E,\Omega \A)=\Hom(\Omega \F,\Omega \E^\vee\tensor\Omega \A)$. An easy calculation shows that this isomorphism continues to hold for morphisms that commute with connections, if connections on $\Omega \E$ and $\Omega \F$ are given. So $(\Omega \E^\vee,\nabla^\vee)$ is a dual object to $(\Omega \E,\nabla)$. The morphism $\Omega \E\to(\Omega \E^\vee)^\vee$ is an isomorphism because $\Omega \E$ is fgp. So every object is reflexive, and $(\C(\Omega \A),\tensor)$ is a rigid tensor category, and the subcategory $(\C_{\flat}(\Omega \A), \tensor)$ is also a rigid tensor category.

\section{A Tannakian category and the fundamental group}
\label{sect:fund-grp}
In this section we will show that under some analytical conditions on the dga the category $\C_{\flat}(\Omega \A)$ constructed above is actually a neutral Tannakian category. In particular, if the algebra $\A= \Omega^0\A$ is a dense $*$-subalgebra of a quantum metric space in the sense of Rieffel \cite{compactquantummetricspaces} these conditions are met, so that our results apply to a broad class of noncommutative differential spaces. We then define the fundamental group of the pertinent noncommutative space as the automorphism group of the fiber functor in this Tannakian category (we refer to \cite{Del90} for more details). 

\subsection{Abelianness of the category}
In this subsection we will study when the category $\C(\Omega \A)$ is abelian. Using Theorem \ref{thm-reduction_to_commutative_case} we can always reduce to a graded commutative dga and this allows for the description of $\C(\Omega\A)$ simply as the category of fgp $\A$-modules equipped with a connection (cf. Remark \ref{rem:reduce-to-0}). 
We will assume that $\A$ is a unital $*$-algebra that is dense in a unital $C^*$-algebra $A$. This will be necessary for some of the constructions below. Moreover we also need the star operation on $\Omega \A$.
\begin{definition}
A $*$-dga is a dga $\Omega \A$ with a linear involution $*$, satisfying $(\alpha\beta)^*=\beta^*\alpha^*$ and $d(\alpha^*)=d(\alpha)^*$.
\end{definition}

We also assume that the elements in $ \A$ that are invertible in $A$ are also invertible in $\A$, so $ \A \cap A^\times=\A^\times$. This is in particular the case if $ \A$ is stable under holomorphic functional calculus (see \cite[p.134]{GraciaBondia}).

The category $\C(\Omega \A)$ is always an additive category: given two objects $(\Omega \E,\nabla^\E)$ and $(\Omega \F,\nabla^\F)$ the morphisms from $\Omega \E$ to $\Omega \F$ form an additive group, and there is an object $\Omega \E\oplus\Omega \F$ where the connection is simply given by $\nabla^{\E\oplus\F}=\nabla^\E \oplus \nabla^\F$. In general, $\C(\Omega \A)$ is not an abelian category. For example, if $\Omega^k \A=0$ for all $k\geq1$, then $\C(\Omega \A)$ is simply the category of fgp modules over $\A$, which is generally not an abelian category. In fact we can easily prove a necessary condition on a graded commutative dga $\Omega \A$ if $\C(\Omega \A)$ is abelian. We will call a differential graded commutative algebra $\Omega \A$ {\em connected} if $\A$ is connected ({\em i.e.} contains no non-trivial projections). 

\begin{lemma}\label{lem-abelian_implies_something_like_Q}
Let $\Omega \A$ be a connected graded commutative dga and suppose that $\C(\Omega \A)$ is abelian. Let $a\in \A$ and suppose that $da=a\omega$ for some $\omega\in\Omega^1 \A$. Then $a$ is either 0 or invertible.
\end{lemma}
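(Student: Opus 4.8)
The plan is to recognise the identity $da=a\omega$ as saying that multiplication by $a$ defines a morphism in $\C(\Omega \A)$ between two rank-one objects, and then to run this morphism through the abelianness hypothesis. Since $\Omega \A$ is graded commutative, $\A=\Omega^0\A$ is an ordinary commutative unital algebra, and by Remark~\ref{rem:reduce-to-0} an object of $\C(\Omega \A)$ is just a finitely generated projective $\A$-module with a connection $\nabla_0\colon\E\to\E\tensor_\A\Omega^1 \A$. I would equip the free module $\A$ with the trivial connection $d$, and a second copy of $\A$ with the connection $\nabla_0'(x)=dx-x\omega$; a one-line check using $da-a\omega=0$ shows that $m_a\colon(\A,d)\to(\A,\nabla_0')$, $x\mapsto xa$, is $\A$-linear and commutes with the connections, hence is a morphism in $\C(\Omega \A)$. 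On underlying modules it has image the ideal $a\A$ and cokernel $\A/a\A$, and the latter inherits a connection from $\nabla_0'$ because $\nabla_0'(ya)=(dy)a\in a\Omega^1 \A$.

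Now I would invoke abelianness. In the abelian category $\C(\Omega \A)$ the morphism $m_a$ has a cokernel, which in particular is a finitely generated projective $\A$-module. The key step, and the one I expect to be the main obstacle, is to identify this categorical cokernel, on underlying modules, with the naive quotient $\A/a\A$; equivalently, to show that once $\C(\Omega \A)$ is abelian the forgetful functor to $\A$-modules preserves cokernels. The soft part of this is routine: the naive cokernel $\A/a\A$ carries a canonically induced connection (as just noted), any $\C(\Omega \A)$-morphism out of $(\A,\nabla_0')$ killing $m_a$ factors through it, and a surjectivity argument shows the induced factoring map again commutes with connections, so that $(\A/a\A,\bar\nabla)$, \emph{provided it lies in $\C(\Omega \A)$}, is the categorical cokernel. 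What genuinely has to be extracted from abelianness is therefore precisely that $\A/a\A$ is finitely generated projective; this is where the hypothesis is used in an essential (not merely formal) way, and it should be established once and for all as exactness of the forgetful functor, using that the kernels, images and cokernels of morphisms in $\C(\Omega \A)$ carry induced connections much as in the curvature computations of Section~\ref{sect:conn-nc}.

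Granting that $\A/a\A$ is finitely generated projective, the rest is algebra. The short exact sequence $0\to a\A\to\A\to\A/a\A\to 0$ of $\A$-modules then splits, so the ideal $a\A$ is a direct summand of $\A$; writing $1=e+k$ with $e\in a\A$ and $k$ in a complementary ideal, and using that an ideal and a complementary ideal meet only in $0$, one gets $ek=0$ and $e^2=e$, and moreover $a\A=e\A$. Since $\A$ is connected it has no non-trivial projection, and hence (via the standard similarity of idempotents with projections, using the hypothesis $\A\cap A^\times=\A^\times$) no non-trivial idempotent, so $e=0$ or $e=1$. If $e=0$ then $a\A=0$ and hence $a=0$; if $e=1$ then $1\in a\A$, so $a\A=\A$ and $a$ is invertible. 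This is exactly the stated dichotomy.
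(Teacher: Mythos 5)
Your argument is essentially the paper's, with two cosmetic variants. Both proofs interpret $da=a\omega$ as saying that multiplication by $a$ is a morphism between rank-one objects of $\C(\Omega\A)$, use abelianness to conclude that $a\A$ (equivalently $\A/a\A$) is finitely generated projective, split the resulting exact sequence so that $a\A$ becomes a direct summand of $\A$, and invoke connectedness. The differences: the paper twists the \emph{source} connection and passes to the image of $a\colon(\A,d+\omega)\to(\A,d)$, while you twist the \emph{target} and pass to the cokernel --- both choices work; and you spell out the passage from ``no non-trivial projection'' to ``no non-trivial idempotent'' (via $\A\cap A^\times=\A^\times$), which the paper compresses into ``since $\A$ is connected.'' Where your write-up is less complete than you would like it to be is exactly the step you yourself call the ``key step'': your argument that $(\A/a\A,\bar\nabla)$ is the categorical cokernel presupposes that $\A/a\A$ already lies in $\C(\Omega\A)$, which is precisely what you need abelianness to deliver, and the promised ``once and for all'' exactness of the forgetful functor is gestured at but never actually established --- as phrased it is circular. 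The paper makes the same identification tacitly (it simply asserts that $\im(a)$ is fgp without comment), so you have correctly isolated a delicate shared point, but you have not filled it in; either supply the argument for exactness of the forgetful functor or, like the paper, accept it as an implicit step rather than advertising it as the crux and then leaving it open.
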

\begin{proof}
Consider the two objects $( \A,d+\omega)$ and $( \A,d)$ of $\C(\Omega \A)$. Since $da=a\omega$ we have a commuting diagram
\[\begin{tikzcd}
 \A\arrow[r,"a"] \arrow[d,"d+\omega"] & \A \arrow[d,"d"]\\
\Omega^1 \A\arrow[r,"a"]& \Omega^1 \A
\end{tikzcd}\]
where $a$ denotes the multiplication by $a$. So multiplication by $a$ is a morphism between these objects. We then get a short exact sequence $0\to\ker(a)\to  \A\xrightarrow a\im(a)\to0$. This is a short exact sequence of $ \A$-modules, and since $\im(a)$ is an fgp $ \A$-module, it is split, and we get $ \A\cong \ker(a)\oplus\im(a)$. Since $ \A$ is connected this means that either $\im(a)=0$, which means that $a=0$, or $\im(a)= \A$, which means that $a$ is invertible.
\end{proof}

We will now define a slightly stronger condition on $ \A$, and we will show later that this is a sufficient condition for the category $\C(\Omega \A)$ to be abelian.

\begin{definition}
Let $\Omega \A$ be a $*$-dga with $\A \subseteq A$, densely. We say that $\Omega \A$ satisfies {\em property Q} if it satisfies the following condition:

for all $a\in \A$ with $a\geq 0$ and all $a_1,\ldots,a_s\in \A$ with all $|a_i|\leq a$, and all $\omega_1,\ldots,\omega_s\in \Omega^1 \A$:

if $da=\sum_{i=1}^sa_i\omega_i$, then either $a=0$ or $a$ is invertible.
\end{definition}
If $\Omega \A$ is graded commutative, this easily implies the conclusion in Lemma \ref{lem-abelian_implies_something_like_Q}: if $da=a\omega$, we have $a^*a\geq 0$ and $d(a^*a)=d(a)^*a+a^*da=aa^*(\omega+\omega^*)$, so $aa^*$ is 0 or invertible, hence $a$ is 0 or invertible. It also implies that $ \A$ is connected, in the sense that there are no non-trivial projections: if $p\in \A$ is a projection, then $dp=d(p^2)=2pdp$, so $(1-2p)dp=0$ and multiplying by $1-2p$ gives $dp=0$. Then $p$ should be 0 or invertible, so any projection is 0 or 1.


\subsubsection{Quantum metric differential graded algebras}
We will now show that property Q holds for quantum metric differential graded algebras. First we recall the notion of a compact quantum metric space, introduced by Rieffel \cite{compactquantummetricspaces}. Let $A$ be a $C^*$-algebra and let $L$ be a seminorm on $A$ that takes finite values on a dense subalgebra $ \A$. We think of $L$ as a Lipschitz norm. This defines a metric on the state space $\S(A)$ by Connes' distance formula \cite[Ch. VI.1]{connes}: for $\chi,\psi\in\S(A)$ we have
\[d_L(\chi,\psi)=\sup\{|\chi(a)-\psi(a)|,a\in \A,L(a)\leq 1\}.\]
This metric then defines a topology on the state space. We already had the weak-$*$ topology, so it is natural to make the following definition:
\begin{definition}
Let $A$ be a unital $C^*$-algebra and let $L$ be a seminorm on $A$ taking finite values on a dense subalgebra. Then $A$ is called a \emph{compact quantum metric space} if the topology on $\S(A)$ induced by the metric $d_L$ coincides with the weak-$*$ topology.
\end{definition}

Now we go back to the case that we have a $*$-dga $\Omega \A$ and $ \A$ is a dense subset of a unital $C^*$-algebra $A$. Suppose that a norm $\norm{\cdot}$ is given on $\Omega^1 \A$, satisfying the inequality $\norm{a\omega}\leq \norm a\cdot \norm{\omega}$ for $a\in  \A,\omega\in\Omega^1 \A$. This defines a seminorm $L$ on $ \A$ by $L(a)=\norm{da}$. The space $\Omega \A$ is called a \emph{quantum metric dga} if $A$ is a compact quantum metric space with this seminorm.
\begin{remark}
Note that any compact quantum metric space gives rise to a quantum metric dga. Indeed, it was realised in \cite{compactquantummetricspaces} that the Lipschitz norm can be obtained as $L(a) = \| [D,a]\|$ for a suitable operator $D$ on a Hilbert space. The space of Connes' differential forms \cite[Ch. VI]{connes} is then a quantum metric dga. 
\end{remark}
Also note that if $\Omega \A$ is a quantum metric dga, the same holds for $Z_g(\Omega \A)$, by \cite[Proposition 2.3]{compactquantummetricspaces}.

\begin{lemma}\label{lem-0_or_invertible}
Let $\Omega \A$ be a graded commutative quantum metric dga and suppose that $ \A\cap A^\times= \A^\times$. Then $\Omega \A$ satisfies property Q.
\end{lemma}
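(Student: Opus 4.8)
The plan is to reduce the statement to the oscillation estimate that holds on a compact quantum metric space, and then to feed that estimate a suitably regularised logarithm of $a$. First I would dispose of the easy cases. Since $\Omega\A$ is graded commutative, $\A=\Omega^0\A$ is commutative, so $A$ is a commutative unital $C^*$-algebra and $a\geq 0$ is a positive element with $\|a\|=\max\operatorname{spec}(a)\in\operatorname{spec}(a)$. If $a=0$ we are done (then every $a_i=0$), and if $0\notin\operatorname{spec}(a)$ then $a\geq\delta>0$ for some $\delta$, so $a\in A^\times$ and hence $a\in\A\cap A^\times=\A^\times$. So it remains to derive a contradiction from $a\neq 0$ together with $0\in\operatorname{spec}(a)$. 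Throughout, write $C:=\sum_{i=1}^s\|\omega_i\|$ and $\operatorname{osc}(b):=\max\operatorname{spec}(b)-\min\operatorname{spec}(b)$ for self-adjoint $b$.

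I would then record two facts. (i) Because $A$ is a compact quantum metric space, $(\S(A),d_L)$ is a compact metric space, so $R:=\operatorname{diam}(\S(A),d_L)<\infty$; since Connes' distance formula gives $|\chi(b)-\psi(b)|\leq L(b)\,d_L(\chi,\psi)\leq R\,L(b)$ for every self-adjoint $b\in\A$ and all states $\chi,\psi$, taking the supremum over $\chi,\psi$ and using that $\operatorname{osc}(b)=\sup_{\chi,\psi}|\chi(b)-\psi(b)|$ yields $\operatorname{osc}(b)\leq R\,L(b)=R\,\|db\|$. (ii) As $a$ is central in the graded commutative algebra $\Omega\A$, one has $d(p(a))=p'(a)\,da$ for every real polynomial $p$, so using $da=\sum_i a_i\omega_i$, the inequalities $|a_i|\leq a$, commutativity of $A$ and the submultiplicativity $\|b\omega\|\leq\|b\|\|\omega\|$,
\[
L(p(a))\ =\ \Bigl\|p'(a)\textstyle\sum_i a_i\omega_i\Bigr\|\ \leq\ \sum_i\|p'(a)a_i\|\,\|\omega_i\|\ \leq\ \|a\,p'(a)\|\sum_i\|\omega_i\|\ =\ C\,\|a\,p'(a)\|.
\]

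The heart of the argument is to choose, for each $n\geq 1$, a real polynomial $p_n$ with $p_n(a)$ behaving like $\log\bigl(a+\tfrac1n\bigr)$. By Stone--Weierstrass, pick a real polynomial $q_n$ with $\sup_{\lambda\in[0,\|a\|]}\bigl|q_n(\lambda)-\tfrac1{\lambda+1/n}\bigr|\leq 1$ and let $p_n$ be an antiderivative of $q_n$ (the constant of integration is irrelevant below). On $[0,\|a\|]$ we then have $|\lambda q_n(\lambda)|\leq\tfrac{\lambda}{\lambda+1/n}+\|a\|\leq 1+\|a\|$, so $\|a\,p_n'(a)\|\leq 1+\|a\|$ and (ii) gives $L(p_n(a))\leq C(1+\|a\|)$ uniformly in $n$. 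On the other hand, integrating the bound on $q_n-\tfrac1{\lambda+1/n}$ shows that $p_n(a)$ equals $\log\bigl(a+\tfrac1n\bigr)$ up to an additive scalar and an element of norm $\leq\|a\|$; since $\operatorname{osc}$ is unchanged by adding scalars and is $2$-Lipschitz for the norm on self-adjoint elements, and since $\min\operatorname{spec}(a)=0$ gives $\operatorname{osc}\bigl(\log(a+\tfrac1n)\bigr)=\log(\|a\|+\tfrac1n)-\log\tfrac1n=\log(n\|a\|+1)$, we obtain $\operatorname{osc}(p_n(a))\geq\log(n\|a\|+1)-2\|a\|\to\infty$. This contradicts $\operatorname{osc}(p_n(a))\leq R\,L(p_n(a))\leq RC(1+\|a\|)$ from (i). Hence $0\notin\operatorname{spec}(a)$, so $a$ is invertible, proving property Q.

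The step I expect to be the main obstacle is precisely this construction of the $p_n$: what is needed is an element of $\A$ — hence, since $\A$ is only assumed closed under inversion and not under holomorphic functional calculus, a genuine polynomial in $a$ — whose Lipschitz seminorm $L$ stays bounded while its oscillation blows up. The individual resolvents $(a+t)^{-1}\in\A$ do not suffice, as for them both $\operatorname{osc}$ and $L$ grow like $1/t$; it is the special feature $\lambda\frac{d}{d\lambda}\log\bigl(\lambda+\tfrac1n\bigr)=\tfrac{\lambda}{\lambda+1/n}\leq 1$ of the regularised logarithm (an ``infinite superposition'' of resolvents) that drives the estimate, and the $C^1$-approximation by the $p_n$ is designed to capture exactly this. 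Everything else is routine $C^*$-algebra bookkeeping.
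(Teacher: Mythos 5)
Your proof is correct and rests on the same core idea as the paper's: manufacture a sequence of real polynomials $p_n$ in $a$ behaving like a regularised logarithm, so that $a\,p_n'(a)$ (hence $L(p_n(a))$) stays bounded while the oscillation of $p_n(a)$ diverges, contradicting compactness of $(\S(A),d_L)$. The paper realises this with the explicit truncated series $p_n(x)=\sum_{k=1}^n\tfrac1k(1-x)^k$ after rescaling to $0\leq a\leq1$, and phrases the contradiction via two Gelfand points at infinite distance; you instead use a Stone--Weierstrass approximation to $\log(\lambda+1/n)$ and appeal directly to the finite diameter of the state space, which is a slightly cleaner way to cash in the compact quantum metric space hypothesis, but the underlying mechanism (keep $a\cdot(\text{derivative})$ uniformly bounded while the primitive's oscillation blows up) is identical.
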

\begin{proof}
Let $a\in \A$ with $a\geq 0$, and let $a_1,\ldots,a_s\in  \A$ with all $|a_i|\leq a$, and $\omega_1,\ldots,\omega_s\in \Omega^1 \A$ satisfying $da=\sum_{i=1}^sa_i\omega_i$. By scaling we may assume that $0\leq a\leq 1$. Define the polynomial $p_n(x)=\sum_{k=1}^n\frac1k(1-x)^k$, which is the truncation of the power series of $-\log(x)$. Then we have
\[dp_n(a)=p_n'(a)da=-\sum_{i=1}^sa_i\omega_i\sum_{k=1}^n(1-a)^{k-1}.\]
For each $1\leq i\leq s$ we have
\[\left|a_i\sum_{k=1}^n(1-a)^{k-1}\right|\leq\left|a\sum_{k=1}^n(1-a)^{k-1}\right|=|1-(1-a)^n|\leq 1.\]
So we get
\[\norm{dp_n(a)}\leq \sum_{i=1}^s\norm{\omega_i},\]
in particular the norm of $dp_n(a)$ is bounded as $n\to \infty$.

If $a$ is neither 0 nor invertible in $A$, there are points $\chi,\psi$ in the Gelfand spectrum of $A$ satisfying $\chi(a)=0$ and $\psi(a)=t>0$. Then $\chi(p_n(a))=\sum_{i=1}^n\frac1k\to \infty$ as $n\to \infty$, while $\psi(p_n(a))=\sum_{i=1}^n\frac1k(1-t)^k\to -\log(t)$ as $n\to \infty$. We get
\[d(\chi,\psi)\geq \frac{|\chi(p_n(a))-\psi(p_n(a))|}{\norm{dp_n(a)}} \to \infty\]
so $d(\chi,\psi)=\infty$. But the metric $d$ should give the weak-$*$ topology on the spectrum, and the spectrum is connected, so this is a contradiction. So either $a=0$ or $a\in A^\times$, and in the second case $a\in  \A\cap A^\times= \A^\times$.
\end{proof}

\subsubsection{Proof of abelianness}
In the rest of this section, we will show that if a graded commutative dga $\Omega \A$ satisfies property Q, then $\C(\Omega \A)$ is an abelian category. Suppose we have a morphism $\phi: \E\to \F$ in the category $\C(\Omega \A)$. We have to show that $\ker(\phi),\im(\phi),\coker(\phi)$ are also in the category. The most difficult part is to show that these are finitely generated projective modules.
\begin{lemma}\label{lem-pseudoinverse}
Let $\phi: \E\to  \F$ be a morphism between fgp $\A$-modules. Then the following are equivalent:
\begin{itemize}
\item The $\A$-modules $\ker(\phi),\im(\phi),\coker(\phi)$ are fgp.
\item There is an $\A$-module homomorphism $\phi^+: \F\to  \E$ satisfying $\phi\phi^+\phi=\phi$.
\end{itemize}
\end{lemma}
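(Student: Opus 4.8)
The plan is to prove the two implications separately; neither is difficult, the content being a bookkeeping exercise with idempotents and split exact sequences.

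For the implication from the existence of $\phi^+$ to the projectivity statements, I would set $e=\phi^+\phi\in\End(\E)$ and $f=\phi\phi^+\in\End(\F)$. The identity $\phi\phi^+\phi=\phi$ gives at once $e^2=\phi^+(\phi\phi^+\phi)=\phi^+\phi=e$ and $f^2=(\phi\phi^+\phi)\phi^+=\phi\phi^+=f$, so both are idempotent $\A$-module endomorphisms. I would then establish the three identifications $\ker(\phi)=\ker(e)=(1-e)\E$, $\ \im(\phi)=\im(f)=f\F$, and $\coker(\phi)=\F/\im(\phi)=\F/\im(f)\cong(1-f)\F$. Each is a one-line verification: e.g. $\phi(x)=0$ forces $e(x)=0$, and conversely $e(x)=0$ gives $\phi(x)=\phi\phi^+\phi(x)=0$; and $\im(f)\subseteq\im(\phi)$ trivially, while $\phi(x)=f(\phi(x))$ gives the reverse inclusion. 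Since the image of an idempotent endomorphism of a module $M$ is a direct summand of $M$ (indeed $M=\im(e)\oplus\im(1-e)$), and a direct summand of an fgp module is again fgp (a summand of a summand of $\A^n$ is a summand of $\A^n$), all three modules are fgp.

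For the converse, assume $\ker(\phi),\im(\phi),\coker(\phi)$ are fgp. Because $\im(\phi)$ is projective, the short exact sequence $0\to\ker(\phi)\to\E\xrightarrow{\phi}\im(\phi)\to0$ splits; I fix a section and write $\E\cong\ker(\phi)\oplus C$ with $\phi$ restricting to an isomorphism $\psi\colon C\xrightarrow{\sim}\im(\phi)$. Because $\coker(\phi)$ is projective, the sequence $0\to\im(\phi)\to\F\to\coker(\phi)\to0$ splits; I fix a retraction $r\colon\F\to\im(\phi)$, i.e. an $\A$-linear map with $r|_{\im(\phi)}=\id$. Now I define $\phi^+\colon\F\to\E$ as the composite $\F\xrightarrow{r}\im(\phi)\xrightarrow{\psi^{-1}}C\hookrightarrow\E$. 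To verify $\phi\phi^+\phi=\phi$, take $x\in\E$ and decompose $x=k+c$ with $k\in\ker(\phi)$, $c\in C$; then $\phi(x)=\psi(c)\in\im(\phi)$, so $r(\phi(x))=\psi(c)$, hence $\phi^+\phi(x)=\psi^{-1}\psi(c)=c$, and finally $\phi\phi^+\phi(x)=\phi(c)=\psi(c)=\phi(x)$.

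I do not expect a genuine obstacle. The only points needing care are that the two exact sequences split — this is exactly where projectivity of $\im(\phi)$ and of $\coker(\phi)$ enters, and one should note that projectivity of $\im(\phi)$ alone does \emph{not} split the second sequence, so the hypothesis on $\coker(\phi)$ is truly needed — together with the elementary remark that direct summands of fgp modules are fgp.
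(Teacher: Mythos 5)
Your proof is correct and follows essentially the same route as the paper's: both directions split the two short exact sequences using projectivity of $\im(\phi)$ and $\coker(\phi)$, and then build $\phi^+$ from the resulting decompositions of $\E$ and $\F$. The only cosmetic difference is that you phrase the direction $\phi^+\Rightarrow\text{fgp}$ in terms of the idempotents $e=\phi^+\phi$ and $f=\phi\phi^+$, whereas the paper writes down the splitting $[f]\mapsto f-\phi\phi^+(f)$ directly — but that is precisely your idempotent $1-f$, so the content is identical.
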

\begin{proof}
Suppose that $\ker(\phi),\im(\phi),\coker(\phi)$ are fgp. Then the short exact sequence $0\to\ker(\phi)\to  \E\to \im(\phi)\to 0$ is split, so $ \E\cong \ker(\phi)\oplus \im(\phi)$. The short exact sequence $0\to \im(\phi)\to \F\to \coker(\phi)\to 0$ is also split, so $\F\cong \im(\phi)\oplus \coker(\phi)$. The map $\phi$ then corresponds to the map $\ker(\phi)\oplus \im(\phi)\to \im(\phi)\oplus \coker(\phi)$ sending $(a,b)$ to $(b,0)$. We can then choose the map $\phi^+:\im(\phi)\oplus\coker(\phi)\to \ker(\phi)\oplus\im(\phi)$ sending $(c,d)$ to $(0,c)$. It is then easy to check that $\phi\phi^+\phi=\phi$ (and also $\phi^+\phi\phi^+=\phi^+$).

Now suppose there is an $ \A$-linear map $\phi^+:\F\to  \E$ satisfying $\phi\phi^+\phi=\phi$. The surjection $\F\to \coker(\phi)$ admits a splitting, sending the equivalence class $[f]$ to $f-\phi\phi^+(f)$. This is well-defined because $\phi\phi^+\phi=\phi$. So the short exact sequence $0\to\im(\phi)\to\F\to\coker(\phi)\to0$ is split, giving $\F\cong\im(\phi)\oplus\coker(\phi)$. So $\im(\phi)$ and $\coker(\phi)$ are fgp. Then the short exact sequence $0\to\ker(\phi)\to \E\to\im(\phi)\to0$ is also split, giving $ \E\cong\ker(\phi)\oplus\im(\phi)$. So $\ker(\phi)$ is also fgp.
\end{proof}
\begin{remark}
If $\phi:\mathbb C^n\to \mathbb C^n$ and $\phi^+:\mathbb C^n\to \mathbb C^n$ satisfy $\phi\phi^+\phi=\phi$ and $\phi^+\phi\phi^+=\phi^+$, and $\phi\phi^+$ and $\phi^+\phi$ are self-adjoint then $\phi^+$ is uniquely determined, and is called the \emph{Moore-Penrose pseudoinverse} \cite{campbell_1977}.
\end{remark}

In the case that $\ker(\phi),\im(\phi),\coker(\phi)$ are finitely generated projective it is easy to construct connections on these modules.

\begin{lemma}\label{lem-connections_on_ker_im_coker}
Let $\phi: \E\to \F$ be a morphism in $\C(\Omega \A)$ and suppose that $\ker(\phi)$, $\im(\phi)$ and $\coker(\phi)$ are finitely generated projective. Then there are natural induced connections on $\ker(\phi)$ and $\coker(\phi)$. There are then also natural induced connections on $\coim(\phi)=\coker(\ker(\phi))$ and $\im(\phi)=\ker(\coker(\phi))$ and these are compatible with the natural isomorphism of $ \A$-bimodules $\coim(\phi)\xrightarrow\sim\im(\phi)$.
\end{lemma}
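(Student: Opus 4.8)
The plan is to build the connection on $\ker(\phi)$ by restriction and the connection on $\coker(\phi)$ by descent, and then observe that $\coim$ and $\im$ inherit connections by applying these two constructions in sequence; the isomorphism $\coim(\phi)\xrightarrow{\sim}\im(\phi)$ will then automatically intertwine them because it is built from $\phi$, which is a morphism in $\C(\Omega\A)$. Throughout I work in the graded commutative setting via Theorem~\ref{thm-reduction_to_commutative_case} and Remark~\ref{rem:reduce-to-0}, so a connection on an fgp $\A$-module $\E$ is a map $\nabla_0:\E\to\E\tensor_\A\Omega^1\A$ satisfying the Leibniz rule, and a morphism in $\C(\Omega\A)$ is an $\A$-linear map commuting with the $\nabla_0$'s.

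First, for $\ker(\phi)$: since $\phi$ commutes with connections, for $e\in\ker(\phi)\subseteq\E$ we have $(\phi\tensor\id)(\nabla^\E_0 e)=\nabla^\F_0(\phi e)=0$, so $\nabla^\E_0 e$ lies in $\ker(\phi\tensor\id_{\Omega^1\A})$. Because $\ker(\phi)$ is fgp, the inclusion $\ker(\phi)\hookrightarrow\E$ splits, so $\ker(\phi)\tensor_\A\Omega^1\A\to\E\tensor_\A\Omega^1\A$ is injective with image exactly $\ker(\phi\tensor\id)$; hence $\nabla^\E_0 e$ may be regarded as an element of $\ker(\phi)\tensor_\A\Omega^1\A$, and this defines $\nabla^{\ker}_0$. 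The Leibniz rule is inherited verbatim from $\nabla^\E_0$, so this is a connection, and the inclusion $\ker(\phi)\hookrightarrow\E$ is by construction a morphism in $\C(\Omega\A)$. Dually, for $\coker(\phi)$: the surjection $q:\F\to\coker(\phi)$ splits, and I define $\nabla^{\coker}_0$ on $\coker(\phi)$ by $\nabla^{\coker}_0(q f)=(q\tensor\id)(\nabla^\F_0 f)$. Well-definedness is the point to check: if $qf=qf'$ then $f-f'=\phi(e)$ for some $e\in\E$, and $(q\tensor\id)\nabla^\F_0\phi(e)=(q\tensor\id)(\phi\tensor\id)\nabla^\E_0 e=(q\phi\tensor\id)\nabla^\E_0 e=0$ since $q\phi=0$; so $\nabla^{\coker}_0$ is well-defined, the Leibniz rule descends, and $q$ is a morphism in $\C(\Omega\A)$.

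Applying the first construction to the morphism $\ker(\phi)\hookrightarrow\E$ (whose cokernel is $\coim(\phi)$) and the second to the morphism $\E\to\coim(\phi)$ gives the induced connection on $\coim(\phi)=\coker(\ker(\phi))$; likewise $\im(\phi)=\ker(\coker(\phi))$ gets its connection as the kernel of the morphism $\F\to\coker(\phi)$. (Here I use the hypothesis that all of $\ker$, $\im$, $\coker$ are fgp so that $\ker(\ker(\phi)\hookrightarrow\E)=0$, $\coker(\im(\phi)\hookrightarrow\F)=\coker(\phi)$ etc.\ are again fgp and the constructions apply.) Finally, the canonical map $\coim(\phi)\to\im(\phi)$ is the unique $\A$-linear map through which $\phi$ factors as $\E\twoheadrightarrow\coim(\phi)\to\im(\phi)\hookrightarrow\F$. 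Since $\E\twoheadrightarrow\coim(\phi)$, $\im(\phi)\hookrightarrow\F$ and $\phi$ itself all commute with the relevant connections, and $\E\twoheadrightarrow\coim(\phi)$ is (split) epi, a diagram chase forces $\coim(\phi)\to\im(\phi)$ to commute with the connections as well.

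The only mild obstacle is bookkeeping: one must make sure that ``restrict'' and ``descend'' are genuinely well-defined as maps into the smaller/quotient module tensored with $\Omega^1\A$, which is exactly where finite generation and projectivity (hence the splitting of the relevant short exact sequences, so that $-\tensor_\A\Omega^1\A$ stays exact on them) enter. Once that is in place everything else is a formal consequence of $\phi$ being a morphism in $\C(\Omega\A)$, and no curvature computation is needed for this lemma.
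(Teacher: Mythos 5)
Your proposal is correct and follows essentially the same strategy as the paper: restrict $\nabla^\E$ to $\ker(\phi)$ (using that $\ker(\phi)\tensor_\A\Omega^1\A$ sits inside $\E\tensor_\A\Omega^1\A$ as the kernel of $\phi\tensor\id$), descend $\nabla^\F$ to $\coker(\phi)$ (using right-exactness of $-\tensor_\A\Omega^1\A$), then obtain connections on $\coim$ and $\im$ by applying these constructions and compare them via the factorisation of $\phi$ through $\coim(\phi)\to\im(\phi)$.

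One small imprecision worth flagging: you write that the inclusion $\ker(\phi)\hookrightarrow\E$ splits \emph{because $\ker(\phi)$ is fgp}. That is not the right reason — a submodule being fgp (or even free) does not make its inclusion split ($2\mathbb Z\subset\mathbb Z$ is the standard counterexample). What makes $0\to\ker(\phi)\to\E\to\im(\phi)\to0$ split is that the \emph{quotient} $\im(\phi)$ is projective, which is part of the hypothesis; the paper makes this explicit by recording the decompositions $\E\cong\ker(\phi)\oplus\im(\phi)$ and $\F\cong\im(\phi)\oplus\coker(\phi)$ from Lemma~\ref{lem-pseudoinverse} and then tensoring these with $\Omega^1\A$. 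Similarly, identifying the image of $\ker(\phi)\tensor_\A\Omega^1\A$ with $\ker(\phi\tensor\id)$ uses both that this split short exact sequence stays exact after $-\tensor_\A\Omega^1\A$ \emph{and} that $\im(\phi)\tensor_\A\Omega^1\A\to\F\tensor_\A\Omega^1\A$ is injective (which follows from $\coker(\phi)$ being projective). Your sketch conflates these into one sentence; the conclusion is correct, but the justification should invoke projectivity of $\im(\phi)$ and $\coker(\phi)$ rather than of $\ker(\phi)$.
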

\begin{proof}
We have the isomorphisms $ \E\cong\ker(\phi)\oplus\im(\phi)$ and $\F\cong \im(\phi)\oplus\coker(\phi)$ as in the proof of Lemma \ref{lem-pseudoinverse}. This makes the commuting diagram
\[\begin{tikzcd}
\ker(\phi)\arrow[r]\arrow[dr] &  \E \arrow[r,"\phi"] \arrow[d,"\sim"] & \F \arrow[d,"\sim"]\\
&\ker(\phi)\oplus \im(\phi) \arrow[r] &\im(\phi)\oplus\coker(\phi).
\end{tikzcd}\]
The lower horizontal map sends an element $(a,b)$ to $(b,0)$. Tensoring this diagram with $\Omega^1 \A$ gives
\[\begin{tikzcd}[column sep = small]
\ker(\phi)\tensor\Omega^1 \A\arrow[r]\arrow[dr] & \Omega \E\tensor\Omega^1 \A \arrow[r,"\phi\tensor\Omega^1 \A"] \arrow[d,"\sim"] & \F\tensor\Omega^1 \A\arrow[d,"\sim"]\\
&\ker(\phi)\tensor\Omega^1 \A \oplus \im(\phi)\tensor\Omega^1 \A \arrow[r] &\im(\phi)\tensor\Omega^1 \A\oplus \coker(\phi)\tensor\Omega^1 \A.
\end{tikzcd}\]
Now we see that $\ker(\phi)\tensor\Omega^1 \A$ is the kernel of the lower horizontal map, so it is also the kernel of $\phi\tensor\Omega^1 \A$. We also know that $\coker(\phi)\tensor\Omega^1 \A$ is the cokernel of the map $\phi\tensor\Omega^1 \A$ because tensoring with $\Omega^1 \A$ is right exact. Now consider the diagram
\[\begin{tikzcd}
\ker(\phi)\arrow[r]& \E\arrow[d,"\nabla^ \E"]\arrow[r,"\phi"]&\F\arrow[d,"\nabla^\F"]\arrow[r]&\coker(\phi)\\
\ker(\phi)\tensor\Omega^1 \A\arrow[r]& \E\tensor\Omega^1 \A\arrow[r,"\phi\tensor\Omega^1 \A"]&\F\tensor\Omega^1 \A\arrow[r]&\coker(\phi)\tensor\Omega^1 \A.
\end{tikzcd}\]
We see that $\nabla^ \E$ induces a map $\nabla^{\ker(\phi)}:\ker(\phi)\to\ker(\phi)\tensor\Omega^1 \A$ and that $\nabla^\F$ induces a map $\nabla^{\coker(\phi)}:\coker(\phi)\to\coker(\phi)\tensor\Omega^1 \A$. It is easy to check that these satisfy the Leibniz rules. These are the connections we want.

Now consider the diagram
\[\begin{tikzcd}
 \E\arrow[r]\arrow[d,"\nabla^ \E"]&\coim(\phi)\arrow[r,"\sim"]\arrow[d,"\nabla^{\coim(\phi)}"] & \im(\phi)\arrow[r]\arrow[d,"\nabla^{\im(\phi)}"] & \F\arrow[d,"\nabla^\F"]\\
 \E\tensor\Omega^1 \A\arrow[r]& \coim(\phi)\tensor\Omega^1 \A \arrow[r,"\sim"] &\im(\phi)\tensor\Omega^1 \A\arrow[r]&\F\tensor\Omega^1 \A.
\end{tikzcd}\]
The connections $\nabla^{\coim(\phi)}$ and $\nabla^{\im(\phi)}$ are constructed as above as connections on the cokernel of the morphism $\ker(\phi)\to \E$ and the kernel of the morphism $\F\to\coker(\phi)$, respectively. We know that the left and the right square of the diagram commute and simple diagram chasing shows that the middle square commutes as well. So the isomorphism $\coim(\phi)\xrightarrow\sim\im(\phi)$ becomes an isomorphism in $\C(\Omega \A)$.
\end{proof}

\begin{definition}
Let $M\in M_n( \A)$ be a matrix with coefficients in a commutative algebra $ \A$. Let $\chi_M\in  \A[x]$ be the characteristic polynomial, with coefficients $(-1)^mD_m(M)$, so 
\[\chi_M(x)=x^n-D_{n-1}(M)x^{n-1}+D_{n-2}(M)x^{n-2}-\ldots+(-1)^nD_0(M).\]
\end{definition}
Note that $D_0(M)=(-1)^n\det(M)$ and $D_{n-1}=\Tr(M)$. In general, $D_m(M)$ is the sum of the determinants of $m\times m$ square submatrices.

We need the inequality below involving $D_m$. Its proof is an easy calculation after diagonalising $M^*M$, and not very interesting. Its proof can be found in the appendix. The term $2\re(M^*[M,K])$ will appear in the proof of Theorem \ref{thm-abelian}.
\begin{lemma}\label{lem-inequality-dm}
For $M,K\in M_n(\mathbb C)$ we have
\[\left|\frac d{dt}_{|t=0}D_m(M^*M+t\cdot 2\re(M^*[M,K]))\right|\leq 4n\norm{K}_{\HS}D_m(M^*M).\]
Here $2\re(M^*[M,K])=M^*[M,K]+(M^*[M,K])^*$ and $\norm{K}_{\HS}$ denotes the Hilbert--Schmidt norm of $K$.
\end{lemma}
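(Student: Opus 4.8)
The plan is to reduce to a diagonal situation and then estimate the logarithmic derivative of $D_m$ directly. First I would record what $D_m$ is intrinsically: if $\sigma_1,\dots,\sigma_n \geq 0$ are the eigenvalues of a positive semidefinite matrix $P$, then $D_m(P)$ is the $(n-m)$-th elementary symmetric polynomial $e_{n-m}(\sigma_1,\dots,\sigma_n)$ (the sum of all $(n-m)\times(n-m)$ principal minors, which for a positive matrix equals the sum over size-$(n-m)$ subsets of products of eigenvalues). Since $D_m$ is invariant under $P\mapsto UPU^*$ for unitary $U$, and since $P(t) := M^*M + t\cdot 2\re(M^*[M,K])$ is a smooth curve of Hermitian matrices with $P(0)=M^*M$ positive semidefinite, I would diagonalise $M^*M = U\,\mathrm{diag}(\sigma_1,\dots,\sigma_n)\,U^*$ and work in the eigenbasis, writing $Q := U^* \cdot 2\re(M^*[M,K])\cdot U$ for the Hermitian perturbation direction, so that $\frac{d}{dt}\big|_{t=0} D_m(P(t)) = \frac{d}{dt}\big|_{t=0} e_{n-m}(\sigma_1 + t Q_{11}, \dots, \sigma_n + t Q_{nn})$ depends only on the diagonal entries $Q_{jj}$ of $Q$ (first-order eigenvalue perturbation of a symmetric polynomial kills the off-diagonal contributions).

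Next I would compute that derivative explicitly: $\frac{d}{dt}\big|_{t=0} e_{n-m}(\sigma_1+tQ_{11},\dots) = \sum_{j=1}^n Q_{jj}\,\frac{\partial e_{n-m}}{\partial \sigma_j}(\sigma) = \sum_{j=1}^n Q_{jj}\, e_{n-m-1}\big(\sigma_{\widehat{j}}\big)$, where $\sigma_{\widehat{j}}$ denotes the tuple with $\sigma_j$ omitted. Since all $\sigma_i \geq 0$ we have the pointwise bound $e_{n-m-1}(\sigma_{\widehat j}) \leq e_{n-m}(\sigma) / \min$-type estimates; more cleanly, $\sigma_j\, e_{n-m-1}(\sigma_{\widehat j}) \leq e_{n-m}(\sigma) = D_m(M^*M)$ for each $j$, and $e_{n-m-1}(\sigma_{\widehat j}) \leq \binom{n-1}{n-m-1}\cdot(\text{product bound})$, but the sharp route is simply $\big|\frac{d}{dt}\big|_{t=0}D_m(P(t))\big| \leq \sum_j |Q_{jj}|\, e_{n-m-1}(\sigma_{\widehat j})$. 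Then I would bound $|Q_{jj}| \leq \|Q\|_{\mathrm{op}} = \|2\re(M^*[M,K])\|_{\mathrm{op}} \leq 2\|M^*[M,K]\|_{\mathrm{op}} \leq 2\|M\|_{\mathrm{op}}\|[M,K]\|_{\mathrm{op}} \leq 4\|M\|_{\mathrm{op}}^2 \|K\|_{\mathrm{op}} \leq 4\|M\|_{\mathrm{op}}^2\|K\|_{\HS}$, noting $\|M\|_{\mathrm{op}}^2 = \|M^*M\|_{\mathrm{op}} = \max_i \sigma_i$.

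Assembling, $\big|\frac{d}{dt}\big|_{t=0}D_m(P(t))\big| \leq 4\max_i\sigma_i\cdot\|K\|_{\HS}\cdot\sum_{j=1}^n e_{n-m-1}(\sigma_{\widehat j})$. The final combinatorial step is the inequality $\max_i\sigma_i \cdot \sum_{j=1}^n e_{n-m-1}(\sigma_{\widehat j}) \leq n\, e_{n-m}(\sigma)$: indeed $\sigma_j e_{n-m-1}(\sigma_{\widehat j}) \leq e_{n-m}(\sigma)$ term by term (every monomial of $\sigma_j e_{n-m-1}(\sigma_{\widehat j})$ is a monomial of $e_{n-m}(\sigma)$, and distinct $j$ give monomials counted with bounded multiplicity — in fact summing over $j$ gives $(n-m)e_{n-m}(\sigma)$), while $\max_i \sigma_i \cdot e_{n-m-1}(\sigma_{\widehat j}) \leq \max_i\sigma_i\cdot e_{n-m-1}(\sigma_{\widehat j})$ and $\sigma_j \leq \max_i\sigma_i$ lets one trade the maximum for $\sigma_j$ at the cost of the factor $n$ after summation; carrying the constants yields exactly $4n\|K\|_{\HS}D_m(M^*M)$. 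The only mild obstacle is bookkeeping the elementary-symmetric-polynomial identities so that the constant comes out as the stated $4n$ rather than something larger; since the problem only asks for \emph{an} inequality with this constant, the crude bounds above suffice and no sharpness is needed. All of this is elementary once the unitary invariance and first-order eigenvalue perturbation are in hand, which is why the computation is relegated to the appendix.
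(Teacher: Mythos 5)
Your reduction to a diagonal $M^*M$, the unitary invariance of $D_m$, and the identification of the first-order derivative with $\sum_j Q_{jj}\,e_{n-m-1}(\sigma_{\widehat j})$ (where $Q$ is the perturbation in the eigenbasis) are all correct and match the paper's strategy of expanding the minors of $M(t)$ modulo $t^2$. The gap is in the estimate itself.

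You bound the diagonal entries by $|Q_{jj}|\leq\norm{Q}_{\mathrm{op}}\leq 4\big(\max_i\sigma_i\big)\norm{K}_{\HS}$ and then invoke
\[
\Big(\max_i\sigma_i\Big)\cdot\sum_{j=1}^n e_{n-m-1}\big(\sigma_{\widehat j}\big)\;\leq\; n\,e_{n-m}(\sigma).
\]
That inequality is false. Take $n=3$, $m=1$, $\sigma=(1,0,0)$: then $e_{n-m}(\sigma)=e_2(1,0,0)=0$, while $\max_i\sigma_i=1$ and $\sum_j e_1(\sigma_{\widehat j})=e_1(0,0)+e_1(1,0)+e_1(1,0)=2$, so the left side is $2$ and the right side is $0$. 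The defect is that bounding $|Q_{jj}|$ by the uniform operator norm discards exactly the information that makes the lemma true: $Q_{jj}$ actually vanishes whenever $\sigma_j=0$ and is of size $O(\sigma_j\norm{K})$, since $Q_{jj}=2\re\big(\sigma_j K_{jj}-\sum_{k,l}\overline{M_{kj}}K_{kl}M_{lj}\big)$ and $|\overline{M_{kj}}M_{lj}|\leq\tfrac12(|M_{kj}|^2+|M_{lj}|^2)\leq\sigma_j$. This $\sigma_j$-scaling is what the paper exploits: once $|Q_{jj}|\leq C\,\sigma_j\norm{K}_{\HS}$, the identity $\sum_j\sigma_j\,e_{n-m-1}(\sigma_{\widehat j})=(n-m)\,e_{n-m}(\sigma)\leq n\,e_{n-m}(\sigma)$ closes the argument. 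So replace the operator-norm bound on $Q_{jj}$ with the entry-wise estimate that keeps the factor $\sigma_j$, and the rest of your plan goes through.
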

\begin{proof}
See Lemma \ref{lem-proof_inequality} in the appendix.
\end{proof}

\begin{remark}\label{rem-inequality_in_MnA}
Both sides of this inequality are continuous functions of the entries of $M$ and $K$. The inequality then still holds for $M,K\in M_n( \A)$, since it can be checked at any point in the spectrum (the Hilbert--Schmidt norm is then $\norm{K}_{\HS}=\norm{\Tr(K^*K)}^\frac12$).
\end{remark}

We are now ready to prove that $\C(\Omega \A)$ is an abelian category. We need to show that any $\phi: \E\to\F$ has a finitely generated projective kernel, image and cokernel. In the first part of the proof we reduce to the case $\phi: \A^{ n}\to  \A^{ n}$. In the second part we prove that each term in the characteristic polynomial of $\phi^*\phi$ is either zero or invertible. Lastly we use this to prove that $\phi$ has a pseudo-inverse (as in Lemma \ref{lem-pseudoinverse}).

\begin{theorem}\label{thm-abelian}
Let $\Omega \A$ be a graded commutative $*$-dga satisfying property Q. Then the category $\C(\Omega \A)$ is abelian.
\end{theorem}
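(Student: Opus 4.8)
The plan is to follow the three-step strategy the authors outline just before the statement. First I would reduce to the case of a morphism $\phi:\A^n\to\A^n$. Given an arbitrary morphism $\phi:\E\to\F$ in $\C(\Omega\A)$, choose complements so that $\E\oplus\E'\cong\A^m$ and $\F\oplus\F'\cong\A^n$ as $\A$-modules; after adding zero maps we obtain a morphism $\tilde\phi:\A^m\oplus\F'\to\F\oplus\F'\cong\A^n$ (or, more symmetrically, pass to $\A^{m+n}\to\A^{m+n}$) whose kernel, image and cokernel differ from those of $\phi$ only by the fgp summands $\E',\F'$. By Lemma~\ref{lem-pseudoinverse} it suffices to produce a pseudoinverse $\tilde\phi^+$ with $\tilde\phi\tilde\phi^+\tilde\phi=\tilde\phi$, and restricting such a pseudoinverse back to $\E,\F$ gives one for $\phi$; then Lemma~\ref{lem-connections_on_ker_im_coker} supplies the connections on $\ker\phi$, $\im\phi$, $\coker\phi$, so that these become objects of $\C(\Omega\A)$ and $\C(\Omega\A)$ is abelian. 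Note that at this stage I have not used that $\phi$ commutes with connections — only the module structure matters for abelianness, which is reassuring.

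So write $\phi=M\in M_n(\A)$ and consider $M^*M\in M_n(\A)$, using the $*$-structure on $\A$. The second step is to show that every coefficient $D_m(M^*M)$ of the characteristic polynomial $\chi_{M^*M}$ is either $0$ or invertible in $\A$. Here is where property Q enters: I want to show $d\,D_m(M^*M)=\sum_i b_i\omega_i$ with each $b_i$ bounded in absolute value by $a:=D_m(M^*M)\ge 0$ (it is nonnegative since $M^*M\ge 0$, being a sum of products of $m\times m$ minors of a positive matrix — one should check this, e.g. via the eigenvalues of $M^*M$ at each point of the spectrum). To compute $d\,D_m(M^*M)$, note that $d$ is a derivation and $D_m$ is polynomial in the entries, so $d\,D_m(M^*M)$ equals the derivative of $D_m$ at $M^*M$ applied to $d(M^*M)=(dM)^*M+M^*dM$. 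Writing $dM=\sum_k M_k\,\omega_k$ for a finite set of forms $\omega_k\in\Omega^1\A$ and matrices $M_k\in M_n(\A)$ (possible because $\Omega^1\A$ is a module over $\A$ and matrices are finite), one gets $d(M^*M)=\sum_k\big(M^*[M,M_k]+(M^*[M,M_k])^*\big)\omega_k$ up to reorganising, i.e. a sum of terms of the form $2\re(M^*[M,K])\,\omega$. Now Lemma~\ref{lem-inequality-dm}, in the form valid over $M_n(\A)$ (Remark~\ref{rem-inequality_in_MnA}), says precisely that the derivative of $D_m(M^*M+t\cdot 2\re(M^*[M,K]))$ at $t=0$ is bounded in absolute value by $4n\|K\|_{\HS}\,D_m(M^*M)$. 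Summing over $k$ yields $d\,D_m(M^*M)=\sum_i b_i\omega_i$ with $|b_i|\le D_m(M^*M)=a$ after absorbing the scalars $4n\|M_k\|_{\HS}$ into the $\omega_i$. Property Q then forces $D_m(M^*M)$ to be $0$ or invertible.

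The third step turns this into a pseudoinverse. Since the $D_m(M^*M)$ for $m=0,\dots,n-1$ (together with the leading coefficient $1$) are each $0$ or invertible, there is a well-defined largest $m_0$ with $D_{m_0}(M^*M)$ invertible (if all are $0$ then $M^*M$ is nilpotent and positive, hence $0$, hence $M=0$ and $\phi^+=0$ works). For $m<m_0$, $D_m(M^*M)=0$, so the characteristic polynomial of $N:=M^*M$ takes the form $\chi_N(x)=x^n - D_{n-1}x^{n-1}+\cdots\pm D_{m_0}x^{m_0}$ with $D_{m_0}$ invertible and all lower coefficients zero. By Cayley--Hamilton $\chi_N(N)=0$, i.e. $N^{m_0}\big(N^{n-m_0}-D_{n-1}N^{n-m_0-1}+\cdots\pm D_{m_0}\big)=0$; write $N^{m_0}q(N)=0$ where $q(N)=N^{n-m_0}-\cdots\pm D_{m_0}$ has invertible constant term $\pm D_{m_0}$. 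One checks $N^{m_0}$ and $q(N)$ commute and their images are complementary: indeed $N q'(N)+\,(\text{const})\cdot\id$ type manipulations, or more cleanly, $D_{m_0}\cdot\id = \mp\big(N^{n-m_0}-\cdots\big) = N\cdot r(N)$ for a polynomial $r$, so $N\cdot(D_{m_0}^{-1}r(N))$ is idempotent-like; setting $e:=\id - D_{m_0}^{-1}N\,r(N)$ annihilates... — the point is that $N$ restricted to $\im(N^{m_0})=\ker q(N)$ is invertible there with inverse a polynomial in $N$, call it $s(N)$, and $e_0 := D_{m_0}^{-1}N^{?}\cdots$ is the idempotent projecting onto that summand. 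Then $M^+ := s(N)\,e_0\,M^*$ (with $e_0$ the idempotent onto the "invertible part" of $N$, a polynomial in $N$) satisfies $M M^+ M = M$: on the orthogonal-complement-type summand where $N$ is invertible everything is as in the usual Moore--Penrose formula $M^+=(M^*M)^{-1}M^*$, and on the complementary summand $N^{m_0}$ kills things so $M$ itself vanishes there. Concretely, $M^+:=p(M^*M)M^*$ for a suitable polynomial $p$ with $p(x)x=$ (the idempotent onto the invertible block) does the job; verifying $MM^+M=M$ is then a computation with commuting polynomials in $N=M^*M$ and the relation $N^{m_0}q(N)=0$, $q(N)$ invertible on $\im N^{m_0}$.

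\textbf{Main obstacle.} I expect the genuine difficulty to be the third step: extracting a clean polynomial formula for the pseudoinverse $M^+$ from the sole information "each coefficient $D_m(M^*M)$ is $0$ or invertible." The first step is bookkeeping and the second is a direct application of Lemma~\ref{lem-inequality-dm}; but fashioning the idempotent that splits $M^*M$ into an invertible block and a (necessarily zero, by positivity and nilpotency) block, and checking $MM^+M=M$ over the noncommutative base $\A$ without any spectral theorem, requires care — one must argue purely algebraically via Cayley--Hamilton and the structure $\chi_{M^*M}(x)=x^{m_0}q(x)$ with $q(0)$ invertible, rather than by looking at eigenvalues pointwise (though pointwise reasoning on the commutative spectrum of $\A$ is a useful sanity check and may in fact be the cleanest route, since $Z_g(\Omega\A)$ is graded commutative and $\A=Z_g(\Omega\A)^0$ is commutative here). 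A secondary subtlety is confirming $D_m(M^*M)\ge 0$, needed to invoke property Q with $a\ge 0$.
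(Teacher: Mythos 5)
Your reduction to $\phi:\A^n\to\A^n$ and your identification of the role of Lemma~\ref{lem-pseudoinverse}, Lemma~\ref{lem-connections_on_ker_im_coker}, Lemma~\ref{lem-inequality-dm} and property~Q all match the paper's strategy. However, there is a decisive gap in your second step that undermines the whole argument.

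You remark ``at this stage I have not used that $\phi$ commutes with connections --- only the module structure matters for abelianness, which is reassuring.'' This is exactly backwards, and it is where the proof goes wrong. After reducing to $\phi=M\in M_n(\A)$, you write $dM=\sum_k M_k\omega_k$ (which one can always do, with no constraint on the $M_k$) and then claim, ``up to reorganising,'' that $d(M^*M)=\sum_k\bigl(M^*[M,M_k]+(M^*[M,M_k])^*\bigr)\omega_k$. That identity is false in general: from $dM=\sum_k M_k\omega_k$ one only gets $d(M^*M)=\sum_k\bigl(M^*M_k+ (M_k^*M)^{\pm}\bigr)\omega_k$, with no commutators in sight, and so no bound of the form $|a_i|\le C\,D_m(M^*M)$ is available to feed into property~Q. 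The commutator structure is \emph{produced} by the hypothesis that $\phi$ intertwines the connections: writing the connection on $\A^n$ as $\nabla=d+\kappa$ with $\kappa=\sum_i K_i\omega_i$, the condition $\nabla(\phi)=0$ is exactly $d\phi=[\phi,\kappa]$, and \emph{this} is what yields $d(\phi^*\phi)=2\re(\phi^*[\phi,\kappa])$ and hence the hypothesis of Lemma~\ref{lem-inequality-dm}. Without the intertwining assumption the statement you are proving is simply false: a bare morphism of fgp $\A$-modules need not have fgp kernel or cokernel (and the paper stresses precisely this, e.g.\ in its discussion preceding Lemma~\ref{lem-abelian_implies_something_like_Q}). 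In the reduction step one must therefore also equip the complement $\G$ with a connection (the paper uses the Grassmannian one) so that the enlarged map $\tilde\phi:\A^n\to\A^n$ still commutes with a connection on $\A^n$.

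Two smaller points. First, you should take the \emph{smallest} $m$ with $D_m(M^*M)\neq0$ (which exists since $D_n=1$); it is automatically invertible by property~Q, and minimality of $m$ makes all lower coefficients of $\chi_{\phi^*\phi}$ vanish, which is what you use. As written, ``the largest $m_0$ with $D_{m_0}$ invertible'' does not by itself imply $D_m=0$ for $m<m_0$; establishing that the set of invertible $D_m$'s is an upper set would require a separate argument (pointwise positivity of the eigenvalues of $M^*M$), which the paper avoids by simply taking the minimum. Second, once $\chi_{\phi^*\phi}(x)=x^m q(x)$ with $q(0)=\pm a_m$ invertible, the pseudoinverse is not as delicate as you fear: one sets $p(x)=x^{-m}\chi_{\phi^*\phi}(x)$, deduces $\phi^*\phi\,p(\phi^*\phi)=0$ (nilpotent and self-adjoint) and then $\phi\, p(\phi^*\phi)=0$ by the $C^*$-identity, and finally $\phi^+=q(\phi^*\phi)\phi^*$ with $q(x)=\bigl(1+(-1)^{n-m-1}a_m^{-1}p(x)\bigr)/x\in\A[x]$; the verification $\phi\phi^+\phi=\phi$ is a one-line polynomial computation. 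So the genuine difficulty is not step three but step two, and specifically the need to exploit $\nabla(\phi)=0$ to obtain the commutator form of $d\phi$.
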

\begin{proof}
Let $\phi: \E\to \F$ be a morphism in $\C(\Omega \A)$. We will show that $\ker(\phi),\im(\phi),\coker(\phi)$ are finitely generated projective $ \A$-modules, and then we are done by Lemma \ref{lem-connections_on_ker_im_coker}.

There is a projective module $\G$ with $ \E\oplus\F\oplus \G\cong \A^n$. We can write $\G\cong p\A^n$ for a projection $p\in\End_{ \A}(\A^n)$. Then we can define a connection $\nabla^\G:p\A^n\to p(\Omega^1 \A)^{ n}$ by $\nabla^\G(g)=pdg$. It is easy to check that this defines a connection on $\G$ (it is called the Grassmannian connection). This makes $(\G,\nabla^\G)$ an object of $\C(\Omega \A)$ and it also defines a connection on the direct sum module $ \E\oplus\F\oplus\G$.

Now the map
\[\begin{pmatrix}0&0&0\\\phi&0&0\\0&0&0\end{pmatrix}: \E\oplus \F\oplus \G\to  \E\oplus \F\oplus \G\]
is a morphism in $\C(\Omega \A)$. Its kernel is $\ker(\phi)\oplus \F\oplus \G$, its image is $0\oplus \im(\phi)\oplus 0$ and its cokernel is $ \E\oplus \coker(\phi)\oplus \G$. So it is enough to show that these are finitely generated projective.  Therefore it is enough to prove: for a connection $\nabla:\A^n\to (\Omega^1 \A)^{ n}$ and a morphism $\phi:\A^n\to \A^n$ that commutes with $\nabla$, the kernel, image and cokernel of $\phi$ are fgp modules.

The connection $\nabla:\A^n\to (\Omega^1 \A)^{ n}$ can be written as $\nabla=d+\kappa$, where $\kappa:\A^n\to (\Omega^1 \A)^{ n}$ is an $ \A$-linear function. We can view $\kappa$ as an $n\times n$ matrix with coefficients in $\Omega^1 \A$. The induced connection on $\Hom_{ \A}(\A^n,\A^n)$, which we still call $\nabla$, satisfies
\[\nabla(\langle f,e\rangle)=\langle\nabla(f),e\rangle+\langle f,\nabla(e)\rangle\]
for $f\in \Hom_{ \A}(\A^n,\A^n)$ and $e\in \A^n$. So
\[d(\langle f,e\rangle)+\kappa \langle f,e\rangle=\langle\nabla(f),e\rangle+\langle f,de\rangle+\langle f,\kappa e\rangle\]
and this gives
\[\nabla(f)=df+[\kappa,f].\]
Since $\phi:\A^n\to \A^n$ commutes with the connection, we know that $\nabla(\phi)=0$ so we conclude that
\[d\phi=[\phi,\kappa]\]
where $\phi$ is viewed as an element of $M_n( \A)$. We get
\[d(\phi^*\phi)=\phi^*d(\phi)+d(\phi)^*\phi=2\re(\phi^*[\phi,\kappa]).\]
Now let $a_m=D_m(\phi^*\phi) \in  \A$ be the $m$-th term of the characteristic polynomial of $\phi^*\phi$ (up to sign). Write $\kappa=\sum_{i=1}^sK_i\omega_i$ with $K_i\in M_n( \A)$ and $\omega_i\in \Omega^1 \A$. We get
\begin{align*}
da_m&=dD_m(\phi^*\phi)\\
&=\frac d{dt}_{|t=0}D_m(\phi^*\phi+td(\phi^*\phi))\\
&=\frac d{dt}_{|t=0}D_m(\phi^*\phi+t\cdot 2\re(\phi^*[\phi,\kappa]))\\
&=\sum_{i=1}^s\frac d{dt}_{|t=0}D_m(\phi^*\phi+t\cdot 2\re(\phi^*[\phi,K_i]))\omega_i\\
&=\sum_{i=1}^sa_i\omega_i
\end{align*}
where
\[a_i=\frac d{dt}_{|t=0}D_m(\phi^*\phi+t\cdot 2\re(\phi^*[\phi,K_i])).\]
By Lemma \ref{lem-inequality-dm} and Remark \ref{rem-inequality_in_MnA} we get $|a_i|\leq 4\norm{K_i}_{\HS}a_m$. We can now apply property Q (if we put the factor $4\norm{K_i}_{\HS}$ in the $\omega_i$) to conclude that $a_m$ is either 0 or invertible.

Now consider the smallest $m$ for which $a_m\neq 0$ (note  $a_n=1$ so this $m$ exists). Then $a_m$ is invertible. The characteristic polynomial of $\phi^*\phi$ is now $\chi_{\phi^*\phi}(x)=x^n-a_{n-1}x^{n-1}+\ldots+(-1)^{n-m}a_mx^m$. Let $p(x)=x^{-m}\chi_{\phi^*\phi}=x^{n-m}-a_{n-1}x^{n-m-1}+\ldots+(-1)^{n-m}a_m$. By Cayley--Hamilton we know $\chi_{\phi^*\phi}(\phi^*\phi)=0$, so $\phi^*\phi p(\phi^*\phi)$ is nilpotent, and also self-adjoint, so $\phi^*\phi p(\phi^*\phi)=0$. Then $(\phi p(\phi^*\phi))\cdot (\phi p(\phi^*\phi))^*=0$, so in fact already $\phi p(\phi^*\phi)=0$. Let $q(x)=\frac{1+(-1)^{n-m-1}a_m^{-1}p(x)}x\in  \A[x]$ and set $\phi^+=q(\phi^*\phi)\phi^*$. Then we have
\begin{align*}
\phi\phi^+\phi&=\phi q(\phi^*\phi)\phi^*\phi
=\phi(1+(-1)^{n-m-1}a_m^{-1}p(\phi^*\phi))=\phi.
\end{align*}
By Lemma \ref{lem-pseudoinverse} it follows that the kernel, image and cokernel of $\phi$ are finitely generated projective. This concludes the proof of the theorem.
\end{proof}

\begin{corollary}
With the same conditions as in the theorem, the category $\C_{\flat}(\Omega \A)$ is also abelian.
\end{corollary}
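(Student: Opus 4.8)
The plan is to deduce the corollary directly from Theorem~\ref{thm-abelian} by showing that $\C_{\flat}(\Omega\A)$ is closed under the formation of kernels, images and cokernels inside the ambient abelian category $\C(\Omega\A)$; equivalently, that it is an abelian subcategory. Since we have already reduced (via Theorem~\ref{thm-reduction_to_commutative_case}) to the graded commutative case, an object of $\C_{\flat}(\Omega\A)$ is an fgp $\A$-module $\E$ with a flat connection $\nabla$, and by Remark~\ref{rem:reduce-to-0} we may work with $\nabla_0:\E\to\E\tensor_\A\Omega^1\A$. The key point is that all the constructions in Lemma~\ref{lem-connections_on_ker_im_coker} are built by restricting or corestricting the given connections along $\A$-module maps that commute with $\nabla$, and such operations cannot increase the curvature.

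First I would recall that $\C_{\flat}(\Omega\A)$ is a full additive subcategory of $\C(\Omega\A)$ containing the zero object and closed under finite direct sums (the curvature of $\nabla^\E\oplus\nabla^\F$ is $(\nabla^\E)^2\oplus(\nabla^\F)^2$, which vanishes when both summands are flat). By Theorem~\ref{thm-abelian} the ambient category $\C(\Omega\A)$ is abelian, so it suffices to check that for a morphism $\phi\colon(\Omega\E,\nabla^\E)\to(\Omega\F,\nabla^\F)$ between flat objects, the objects $\ker(\phi)$, $\coker(\phi)$, $\im(\phi)$, $\coim(\phi)$ computed in $\C(\Omega\A)$ again carry flat connections. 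For the kernel: in the notation of Lemma~\ref{lem-connections_on_ker_im_coker} the inclusion $\iota\colon\ker(\phi)\hookrightarrow\E$ intertwines $\nabla^{\ker(\phi)}$ and $\nabla^\E$, hence $\iota\circ(\nabla^{\ker(\phi)})^2=(\nabla^\E)^2\circ\iota=0$, and since $\iota$ is injective this forces $(\nabla^{\ker(\phi)})^2=0$. Dually, for the cokernel the surjection $\pi\colon\F\twoheadrightarrow\coker(\phi)$ satisfies $(\nabla^{\coker(\phi)})^2\circ\pi=\pi\circ(\nabla^\F)^2=0$, and surjectivity of $\pi$ gives $(\nabla^{\coker(\phi)})^2=0$. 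Applying these two observations to the maps $\ker(\phi)\to\E$ and $\F\to\coker(\phi)$ respectively yields flatness of $\coim(\phi)$ and $\im(\phi)$, and the isomorphism $\coim(\phi)\xrightarrow{\sim}\im(\phi)$ of Lemma~\ref{lem-connections_on_ker_im_coker} is then an isomorphism of flat objects.

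It remains to note that a full additive subcategory of an abelian category which is closed under kernels and cokernels is itself abelian, with the same kernels and cokernels: the canonical map $\coim\to\im$ in the subcategory agrees with the one in $\C(\Omega\A)$, which is an isomorphism there by Theorem~\ref{thm-abelian}, hence an isomorphism in the subcategory. Therefore $\C_{\flat}(\Omega\A)$ is abelian. I do not expect any genuine obstacle here; the only point requiring a small amount of care is that the connections on $\ker$, $\im$, $\coker$ produced in Lemma~\ref{lem-connections_on_ker_im_coker} are the \emph{same} ones that realise these as kernel/image/cokernel in the abelian category $\C(\Omega\A)$ of Theorem~\ref{thm-abelian}, so that ``flat subobject'' and ``subobject that happens to be flat'' coincide — but this is immediate from the constructions, which in both places are induced by restriction of $\nabla^\E$ and corestriction of $\nabla^\F$ along the same split short exact sequences.
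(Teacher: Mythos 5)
Your proposal is correct and is essentially the paper's own argument: both deduce the corollary from Theorem~\ref{thm-abelian} by observing that $\C_{\flat}(\Omega\A)$ is a full subcategory closed under the kernel, image and cokernel constructions of Lemma~\ref{lem-connections_on_ker_im_coker}, since the induced connections inherit flatness. You supply the curvature-chasing details (using injectivity of the inclusion of the fgp direct summand $\ker(\phi)\hookrightarrow\E$, resp.\ surjectivity of $\F\twoheadrightarrow\coker(\phi)$) that the paper leaves implicit, but the route is the same.
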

\begin{proof}
The category $\C_{\flat}(\Omega \A)$ is a full subcategory of $\C(\Omega \A)$. The kernel, image and cokernel of a morphism in $\C_{\flat}(\Omega \A)$ are again in $\C_{\flat}(\Omega \A)$ because the connections constructed in Lemma \ref{lem-connections_on_ker_im_coker} are flat if $\nabla^ \E$ and $\nabla^\F$ are flat.
\end{proof}

\begin{corollary}
Let $\Omega \A$ be a $*$-dga such that the graded centre $Z_g(\Omega\A)$ satisfies property Q. Then both categories $\C(\Omega \A)$ and $\C_{\flat}(\Omega\A)$ are abelian.
  \end{corollary}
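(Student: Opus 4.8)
The plan is to reduce everything to Theorem \ref{thm-abelian} and its corollary by means of the equivalence of categories established in Theorem \ref{thm-reduction_to_commutative_case}. The first step is to check that $Z_g(\Omega \A)$, equipped with the restrictions of $d$ and $*$, is again a graded commutative $*$-dga. That it is a subalgebra closed under $d$ is precisely Lemma \ref{lem-graded_centre}(i). It is graded commutative essentially by definition: if $\alpha,\beta\in Z_g(\Omega \A)$ then $[\alpha,\beta]=0$ since $\beta\in\Omega \A$ and $\alpha$ is graded central. It is closed under $*$ by a one-line computation: applying $*$ to the identity $\alpha\nu=(-1)^{|\alpha|\cdot|\nu|}\nu\alpha$ (valid for $\alpha\in Z_g(\Omega \A)$, $\nu\in\Omega \A$) and using that $*$ is a degree-preserving anti-involution which is a bijection of $\Omega \A$, one finds $[\alpha^*,\mu]=0$ for every $\mu\in\Omega \A$, hence $\alpha^*\in Z_g(\Omega \A)$. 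The compatibility relations $(\alpha\beta)^*=\beta^*\alpha^*$ and $d(\alpha^*)=d(\alpha)^*$ are simply inherited from $\Omega \A$.

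With this in hand, $Z_g(\Omega \A)$ is a graded commutative $*$-dga which satisfies property Q by hypothesis. Theorem \ref{thm-abelian} therefore shows that $\C(Z_g(\Omega \A))$ is abelian, and the corollary immediately following it shows that $\C_{\flat}(Z_g(\Omega \A))$ is abelian as well.

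Finally, Theorem \ref{thm-reduction_to_commutative_case} provides equivalences of categories $\C(\Omega \A)\xrightarrow{\sim}\C(Z_g(\Omega \A))$ and $\C_{\flat}(\Omega \A)\xrightarrow{\sim}\C_{\flat}(Z_g(\Omega \A))$. Since being an abelian category is invariant under equivalence of categories, it follows that both $\C(\Omega \A)$ and $\C_{\flat}(\Omega \A)$ are abelian, which is the assertion.

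There is no genuine obstacle here: the argument is purely formal once Theorems \ref{thm-reduction_to_commutative_case} and \ref{thm-abelian} are available. The only point that deserves any attention is the verification that $Z_g(\Omega \A)$ inherits the $*$-structure (so that the phrase ``$Z_g(\Omega \A)$ satisfies property Q'' is meaningful), which is the short computation indicated above; note moreover that in the quantum metric setting the density of $Z_g(\A)$ in an appropriate $C^*$-algebra is guaranteed by \cite[Proposition 2.3]{compactquantummetricspaces}, as already recorded in the text.
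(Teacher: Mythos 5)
Your proof is correct and follows essentially the same route as the paper: combine Theorem \ref{thm-abelian} (and its corollary) applied to $Z_g(\Omega\A)$ with the equivalence $\C(\Omega\A)\simeq\C(Z_g(\Omega\A))$ from Theorem \ref{thm-reduction_to_commutative_case}. Your extra verification that $Z_g(\Omega\A)$ inherits the graded commutative $*$-dga structure is a sensible hygiene check that the paper leaves implicit.
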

\proof
This follows directly from Theorem \ref{thm-abelian} and the equivalence between $\C(\Omega\A)$ and $\C(Z_g(\Omega\A))$ proved in Theorem \ref{thm-reduction_to_commutative_case}. 
\endproof

\subsection{Definition of the fundamental group}
In this section we will define the fundamental group of a dga satisfying suitable analytical conditions. 
We will first complete the proof that $\C_{\flat}(\Omega \A)$ is a Tannakian category, after which the fundamental group is defined as the group of automorphisms of the fibre functor.
Since, we have already proven that $\C_{\flat}(\Omega \A)$ is a rigid tensor category, and under some conditions on $ \A$, that it is abelian, what is left to show is that $\End(\Omega \A)=\mathbb C$ and constructing a fiber functor $\omega:\C_{\flat}(\Omega \A)\to \Vec$, where $\Vec$ is the category of finite-dimensional vector spaces over $\mathbb C$. 

\begin{lemma}
Let $\Omega \A$ be a graded commutative dga satisfying property Q. Then the algebra of endomorphisms is $\End(\Omega \A)=\mathbb C$.
\end{lemma}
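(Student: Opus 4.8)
The plan is to first make the endomorphism algebra of the unit object explicit. The unit of $\C_{\flat}(\Omega \A)$ is $(\Omega \A,d)$, and since $\Omega \A$ is graded commutative we may use Remark~\ref{rem:reduce-to-0} to describe it as the fgp $\A$-module $\A$ equipped with $\nabla_0=d\colon \A\to\Omega^1 \A$. An endomorphism of this object is an $\A$-linear map $\A\to \A$, i.e.\ multiplication by $a:=\phi(1)\in \A$, which is compatible with the connection precisely when $\nabla_0(ax)=a\,\nabla_0(x)$ for all $x\in \A$. Since $\nabla_0(ax)=da\cdot x+a\,dx$ while $a\,\nabla_0(x)=a\,dx$, this amounts to $da\cdot x=0$ for all $x$, equivalently (put $x=1$, noting $d1=0$) $da=0$. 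Hence $\End(\Omega \A)=\{a\in \A:da=0\}$; by the Leibniz rule and $d(a^*)=(da)^*$ this is a unital $*$-subalgebra of $\A$ containing $\mathbb C\cdot 1$, and it remains to show it is no larger.

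Next I would show that every self-adjoint $a$ with $da=0$ is a real scalar. Let $\lambda$ lie in the spectrum of $a$ in the $C^*$-algebra $A$; since $a=a^*$ this spectrum is a nonempty subset of $\mathbb R$. Put $b=a-\lambda 1$, so that $b=b^*$, $db=0$, and $b$ is not invertible in $A$, hence (as $ \A\cap A^\times= \A^\times$) not invertible in $\A$. Now $b^2=b^*b\geq 0$ and $d(b^2)=db\cdot b+b\cdot db=0$, so property Q, applied with all the $a_i$ equal to $0$, forces $b^2=0$ or $b^2\in \A^\times$; the latter would make $b$ invertible, so $b^2=0$, whence $\norm{b}_A^2=\norm{b^*b}_A=\norm{b^2}_A=0$ and $b=0$. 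Thus $a=\lambda 1$.

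Finally, for an arbitrary $a\in\End(\Omega \A)$ write $a=\tfrac12(a+a^*)+i\cdot\tfrac1{2i}(a-a^*)$. Both summands are self-adjoint, and since $d(a^*)=(da)^*=0$ both are annihilated by $d$; by the previous step each is a real multiple of $1$, so $a\in\mathbb C\cdot 1$. This proves $\End(\Omega \A)=\mathbb C$.

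I do not expect a genuine obstacle here: the two points requiring care are the translation of ``a morphism of the unit object that commutes with the connection'' into the plain condition $da=0$, and the (harmless) instance of property Q with vanishing right-hand side, after which the $C^*$-spectral shift argument does the work.
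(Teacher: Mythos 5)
Your proof is correct and follows essentially the same line as the paper's: reduce to $\End(\Omega\A)=\{a\in\A : da=0\}$ (multiplication by $a=\phi(1)$ commutes with $\nabla_0=d$ iff $da=0$), then apply property Q after a spectral shift to force $a$ to be scalar. The only difference is that you are more careful than the paper about the positivity hypothesis in property Q: the paper's one-line conclusion ``since $\Omega\A$ satisfies property Q it follows that $a=\lambda$'' tacitly uses the reduction via $(a-\lambda)^*(a-\lambda)\geq 0$ recorded earlier in the text, whereas you make this explicit by first splitting $a$ into self-adjoint parts and working with $b^2\geq 0$ --- a correct fix of a minor elision rather than a genuinely different route.
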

\begin{proof}
Let $\theta:\Omega \A\to \Omega \A$ be an isomorphism. Since $\theta$ is bilinear, for all $\alpha\in\Omega \A$ we have $\theta(\alpha)=\alpha\theta(1)$. So $\theta$ is determined by $a=\theta(1) \in \A$. Since $\theta$ has to commute with the connection we get $da=d(\theta(1))=\theta(d(1))=0$. Let $\lambda$ be a complex number in the spectrum of $a$. Then we have $d(a-\lambda)=0$, but $a-\lambda$ is not invertible. Since $\Omega \A$ satisfies property Q it follows that $a=\lambda\in\mathbb C$.
\end{proof}

For the fibre functor, pick a point $p$ in the Gelfand spectrum $\widehat{A}$ of a commutative $A$ (that contains $ \A$ densely). Then our fibre functor is given by sending a bimodule $\E$ to the localisation of its centre at $p$. This is defined as $ \E\tensor_{ \A}\mathbb C$, where the $ \A$-module structure on $\mathbb C$ is given by $p$. Note that this depends on a choice of a point in the Gelfand spectrum. This point plays a similar role as the base point of the usual fundamental group.

\begin{lemma}
  \label{lma:fibrefunctor}
Let $\Omega \A$ be a graded commutative dga that satisfies property Q and let $p\in \widehat{A}$. There is a faithful exact fibre functor $\omega:\C_{\flat}(\Omega \A)\to \Vec$ sending $ \E$ to $ \E_p$.
\end{lemma}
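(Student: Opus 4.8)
The plan is to use the description of $\C_{\flat}(\Omega\A)$ from Remark \ref{rem:reduce-to-0}: since $\Omega\A$ is graded commutative, an object may be taken to be an fgp $\A$-module $\E$ with a connection $\nabla_0\colon\E\to\E\otimes_{\A}\Omega^1 \A$, and a morphism to be an $\A$-linear map commuting with the connections. Write $\mathbb{C}_p$ for $\mathbb{C}$ endowed with the $\A$-module structure coming from the character $p$ (the restriction to $\A$ of a character of $A$), and set $\omega(\E)=\E_p:=\E\otimes_{\A}\mathbb{C}_p$, with $\omega(\phi)=\phi\otimes\id$ on morphisms. As $\E$ is a direct summand of some $\A^{n}$, the space $\E_p$ is a direct summand of $\mathbb{C}^{n}$, hence finite dimensional, so $\omega$ takes values in $\Vec$; it is plainly $\mathbb{C}$-linear. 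For the tensor structure, under the above identification the tensor product of $\E$ and $\F$ is $\E\otimes_{\A}\F$ with the Leibniz connection, and the base-change isomorphisms
\[
(\E\otimes_{\A}\F)\otimes_{\A}\mathbb{C}_p\;\cong\;\E\otimes_{\A}(\F\otimes_{\A}\mathbb{C}_p)\;\cong\;\E\otimes_{\A}\F_p\;\cong\;(\E\otimes_{\A}\mathbb{C}_p)\otimes_{\mathbb{C}}\F_p\;=\;\E_p\otimes_{\mathbb{C}}\F_p
\]
(the third step using that $\A$ acts on $\F_p$ through $p$) are natural in $\E$ and $\F$; compatibility with the unit $\omega(\Omega\A)=\A\otimes_{\A}\mathbb{C}_p=\mathbb{C}$ and with the associativity and commutativity constraints is then a routine check of commuting diagrams.

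Next I would show $\omega$ is exact. By Lemma \ref{lem-connections_on_ker_im_coker} the kernel, image and cokernel of a morphism in $\C_{\flat}(\Omega\A)$ are computed on the underlying $\A$-modules (carrying the induced connections), so a short exact sequence $0\to\E'\to\E\to\E''\to0$ in $\C_{\flat}(\Omega\A)$ is in particular short exact as a sequence of $\A$-modules. Since $\E''$ is projective it splits, so $\E\cong\E'\oplus\E''$ as $\A$-modules, and applying $-\otimes_{\A}\mathbb{C}_p$, which commutes with finite direct sums, produces a short exact sequence of vector spaces. Hence $\omega$ is exact.

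It remains to prove faithfulness, which I expect to be the only genuinely non-formal point. As $\omega$ is an exact additive functor between abelian categories, faithfulness is equivalent to the implication $\omega(\E)=0\Rightarrow\E=0$ (given a morphism $\phi$ with $\omega(\phi)=0$ one then has $\omega(\im\phi)=\im(\omega\phi)=0$, so $\im\phi=0$ and $\phi=0$). To establish this, write $\E\cong e\A^{n}$ with $e\in M_{n}(\A)$ idempotent and consider the function $r\colon\widehat{A}\to\mathbb{Z}_{\ge0}$, $r(x)=\Tr(e(x))$; it is well-defined and continuous because each $e(x)\in M_{n}(\mathbb{C})$ is an idempotent matrix, and $\E=0$ iff $e=0$ in $M_{n}(A)$ iff $r\equiv0$. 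Let $P(t)=\prod_{k=1}^{n}(t-k)$; then $\tfrac{1}{P(0)}P(\Tr e)$ lies in $\A$ (a polynomial in $\Tr(e)\in\A$) and its value at $x$ is $1$ when $r(x)=0$ and $0$ otherwise, so it is an idempotent of $\A$. By property Q this idempotent must be $0$ or $1$ — the argument recorded after the definition of property Q, showing $\A$ has no nontrivial projections, applies verbatim to idempotents — and it is $1$ only if $r\equiv0$, i.e. only if $\E=0$. Hence, if $\E\neq0$ then $r$ vanishes nowhere and in particular $\E_p\neq0$. This proves faithfulness, and so $\omega$ is a faithful exact fibre functor.

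The main obstacle is this last step; everything preceding it is base-change bookkeeping resting on results already in hand (abelianness, the explicit description of kernels and cokernels, rigidity, $\End(\Omega\A)=\mathbb{C}$). The subtlety to watch in the faithfulness argument is that the rank function $r$ must be integer-valued and continuous — which is precisely where the idempotency of $e$, rather than its being merely some matrix over $\A$, is used — and that the relevant idempotent can be produced inside $\A$ itself, not just inside $A$; this is why property Q, rather than bare connectedness of $\widehat{A}$, is the hypothesis one wants.
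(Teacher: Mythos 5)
Your proof is correct, but it diverges from the paper's in two places, and in both cases I think your version is actually the more careful one. For exactness, the paper simply asserts ``localisation is always exact''; however, $\E\mapsto\E\tensor_\A\mathbb C_p$ is a fibre (quotient by a maximal ideal), not a localisation, and such a functor is only right exact in general. The honest reason for exactness is the one you give: by Theorem~\ref{thm-abelian} and Lemma~\ref{lem-connections_on_ker_im_coker} every short exact sequence in $\C_\flat(\Omega\A)$ is a short exact sequence of fgp $\A$-modules, hence splits, hence stays exact after $-\tensor_\A\mathbb C_p$. For faithfulness, the paper passes to the $C^*$-algebra, identifies $\im(\phi)\tensor_\A A$ with a vector bundle over $\widehat A$, and invokes local constancy of the rank together with connectedness of $\widehat A$ -- but it only established earlier that $\A$ (not $A$) has no nontrivial projections, so the jump to connectedness of $\widehat A$ is a small gap. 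Your route avoids it entirely: writing $\E=e\A^n$ with $e\in M_n(\A)$ idempotent, $\Tr(e)\in\A$ is integer-valued on $\widehat A$, so $\frac{1}{P(0)}P(\Tr e)$ with $P(t)=\prod_{k=1}^n(t-k)$ is a genuine projection of $\A$ (it is $\{0,1\}$-valued, hence self-adjoint, so the paper's post-definition remark on property~Q does apply), and property~Q forces it to be $0$ or $1$. The only imprecision is your phrase that the projection argument ``applies verbatim to idempotents'': it applies to projections, and your element happens to be one because it takes values in $\{0,1\}$; you should say that, rather than suggesting the argument works for non-self-adjoint idempotents (where $a\ge 0$ would fail). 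Your reduction of faithfulness of $\omega$ to ``$\omega$ reflects zero objects'' via exactness and $\omega(\im\phi)=\im(\omega\phi)$ is standard and correct. The tensor-structure discussion, while fine, is not needed for this particular lemma, which only asserts that $\omega$ is a faithful exact functor to $\Vec$.
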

\begin{proof}
Let $( \E,\nabla^ \E)$ and $(\F,\nabla^\F)$ be objects of $\C_{\flat}(\Omega \A)$ and let $\phi: \E\to\F$ be a morphism commuting with the connections. Since $\phi$ is $ \A$-linear, this induces a map $ \E_p\to \F_p$, showing that $\omega$ is functorial.

To show that $\omega$ is faithful, suppose that $\phi_p=0$. Since $\C_{\flat}(\Omega \A)$ is abelian we know that $\im(\phi)$ is an fgp module. Now look at $\im(\phi)\tensor_{ \A} A$. This is an fgp module over the C$^*$-algebra $A$, which corresponds to a vector bundle on $\widehat A$. It is zero at $p$, and the rank is locally constant, and $\widehat A$ is connected, so $\im(\phi)\tensor_ \A A=0$. Since $\im(\phi)$ is projective it is flat, and $\im(\phi)\hookrightarrow\im(\phi)\tensor_{ \A} A$ is an injection, so also $\im(\phi)=0$. We conclude that $\phi=0$.

The fibre functor is exact because a localisation is always exact.
\end{proof}

\begin{theorem}
  Let $\Omega \A$ be a $*$-dga such that $Z_g(\Omega \A)$ satisfies property Q. Then the category $\C_{\flat}(\Omega \A)$ can be equipped with the structure of a neutral Tannakian category.
  \end{theorem}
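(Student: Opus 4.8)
The plan is to check directly the defining axioms of a neutral Tannakian category over $\mathbb C$ (in the sense of \cite{Del90}): a rigid abelian $\mathbb C$-linear tensor category $\mathcal T$ with $\End(\mathbf 1)=\mathbb C$, together with an exact, faithful, $\mathbb C$-linear tensor functor $\omega\colon\mathcal T\to\Vec$ into finite-dimensional complex vector spaces. Almost everything we need has been proved already; the only step requiring genuine care is to pass from $\Omega\A$ to its graded centre in a way that respects the entire tensor-categorical structure, not merely the underlying abelian category.

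First I would reduce to the graded commutative case. By Theorem \ref{thm-reduction_to_commutative_case} the functor $\C_\flat(\Omega\A)\xrightarrow{\sim}\C_\flat(Z_g(\Omega\A))$ is an equivalence, and — as recorded after Lemma \ref{lem-curvature_tensor_product} and in the duality discussion of Section 2.4 — it intertwines the tensor products, carries the unit $\Omega\A$ to the unit $Z_g(\Omega\A)$, and commutes with duals; hence it is an equivalence of rigid tensor categories. Since a neutral Tannakian structure transports along such an equivalence, it is enough to endow $\C_\flat(Z_g(\Omega\A))$ with one. By Lemma \ref{lem-graded_centre} the algebra $Z_g(\Omega\A)$ is a graded commutative $*$-dga, and it satisfies property Q by hypothesis; moreover its degree-zero part $Z_g(\A)$ is commutative and unital, hence dense in a commutative unital $C^*$-algebra $B$ (the one in which property Q places it). So from now on I may and do assume $\Omega\A$ is graded commutative and satisfies property Q.

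In this situation the required structural properties are exactly the preceding results: $(\C_\flat(\Omega\A),\otimes)$ is rigid by the end of Section 2.4, it is $\mathbb C$-linear by construction, it is abelian by the Corollary to Theorem \ref{thm-abelian}, and $\End(\Omega\A)=\mathbb C$ by the lemma computing endomorphisms of the unit stated just before Lemma \ref{lma:fibrefunctor}. For the fibre functor, choose a point $p$ in the Gelfand spectrum $\widehat B$; Lemma \ref{lma:fibrefunctor} then provides a faithful exact $\mathbb C$-linear functor $\omega\colon\C_\flat(\Omega\A)\to\Vec$, $\E\mapsto\E_p$ (finite-dimensionality of $\E_p$ following from $\E$ being fgp). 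It remains only to exhibit $\omega$ as a tensor functor: using Remark \ref{rem:reduce-to-0} I view objects as fgp $\A$-modules with flat connection and the tensor product as $\otimes_\A$, so that $\omega$ becomes base change along the unital ring homomorphism $p\colon\A\to\mathbb C$. Base change is symmetric monoidal, giving natural isomorphisms $\omega(\E\otimes\F)=(\E\otimes_\A\F)_p\xrightarrow{\sim}\E_p\otimes_{\mathbb C}\F_p=\omega(\E)\otimes\omega(\F)$ compatible with the associativity and commutativity constraints, together with $\omega(\Omega\A)=\A\otimes_\A\mathbb C=\mathbb C$ identifying units; one checks routinely that morphisms commuting with connections are respected. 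This makes $\omega$ a fibre functor and completes the verification.

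The main obstacle is the reduction step: one must be sure that the equivalence of Theorem \ref{thm-reduction_to_commutative_case} is an equivalence of \emph{rigid tensor} categories, since the Tannakian axioms involve $\otimes$, the unit, and duality, whereas a priori that theorem was only stated as an equivalence of categories preserving flatness. Once this compatibility is confirmed, the remaining work is bookkeeping already carried out in the lemmas cited above, and the final assertion follows once all the axioms are checked.
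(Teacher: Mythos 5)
Your proof is correct and follows the same route as the paper: reduce to $Z_g(\Omega\A)$ via Theorem \ref{thm-reduction_to_commutative_case}, then invoke the preceding results (rigidity, abelianness, $\End(\mathbf 1)=\mathbb C$, and Lemma \ref{lma:fibrefunctor} for the fibre functor). You spell out more carefully than the paper does that the reduction equivalence is compatible with $\otimes$, units, and duals, and that the fibre functor is monoidal — points the paper treats as already established in Section 2 — but the underlying argument is the same.
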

\begin{proof}
  We already know that the category $\C_\flat(\Omega\A)$ is an abelian rigid tensor category. The reduction to the graded commutative case (Theorem \ref{thm-reduction_to_commutative_case}) then allows us to apply the previous two Lemma's to complete the proof. 
  \end{proof}
The fibre functor $\omega: \C_{\flat}(\Omega \A) \to \Vec$ is then of course given as the composition of the functor defined in Theorem \ref{thm-reduction_to_commutative_case} and the fibre functor in Lemma \ref{lma:fibrefunctor}.

We thus derive from eg. \cite[Theorem 2.11]{Del90} that the category $\C_{\flat}(\Omega \A)$ is equivalent to the category of representations of an algebraic group scheme, which allows us to make the following definition.  
\begin{definition}\label{def-fundamental_group}
Let $\Omega \A$ be a dga such that $Z_g(\Omega \A)$ satisfies property Q. Let $p\in \widehat {Z_g( \A)}$. Then we define $\pi^1(\Omega \A,p)$ to be the group scheme of automorphisms of the fibre functor $\omega:\C_{\flat}(\Omega \A)\to \Vec$ at $p$.
\end{definition}



Note that in practice we will simply recognise the category $\C_{\flat}(\Omega \A)$ as being equivalent to the category of representations of some (topological) group whose pro-algebraic completion is the fundamental group.

\begin{example}
We have seen in Example \ref{ex:manifold} that for a connected manifold $M$ without boundary we have $\C_{\flat}(\Omega M) \cong \Rep (\pi_1(M))$. Hence $\pi^1( \Omega M)$ is the pro-algebraic completion of $\pi_1(M)$. 
\end{example}

\begin{example}\label{ex-[0,1]}
Consider the graded commutative $*$-dga $\Omgb [0,1]$, where $\Omega^0[0,1]=C^\infty[0,1]$ and $\Omega^1[0,1]=C^\infty[0,1]dx$, $\Omega^i[0,1]=0$ for $i\geq 2$. Each fgp module over $C^\infty[0,1]$ is a free module. We will show explicitly that each flat connection on a free module is isomorphic to the trivial connection $d$, thereby showing that $\C_{\flat}(\Omgb[0,1])$ is equivalent to the category of vector spaces and $\pi^1(\Omgb[0,1])$ is trivial as we would expect.

Let $W\tensor C^\infty[0,1]$ be a module over $C^\infty[0,1]$ where $W$ is a vector space, and let $\nabla:W\tensor C^\infty[0,1]$ be a connection. Write $\nabla=d+\omega$, where $\omega:W\tensor C^\infty[0,1]\to W\tensor\Omega^1[0,1]$ is a linear map. Let $\alpha:W\tensor C^\infty[0,1]\to W\tensor C^\infty[0,1]$ be an isomorphism. Then the diagram
\[\begin{tikzcd}
W\tensor C^\infty[0,1] \arrow[r,"\nabla"] \arrow[d,"\alpha" ] & W\tensor \Omega^1[0,1] \arrow[d,"\alpha"]\\
W\tensor C^\infty[0,1] \arrow[r,"d"]& W\tensor \Omega^1[0,1]
\end{tikzcd}\]commutes if and only if $\nabla=\alpha^{-1}\circ d\circ \alpha$. If we see $\alpha$ as an element of $\End(W)\tensor C^\infty[0,1]$ we can take the derivative of it, which satisfies $d(\alpha)=d\circ\alpha-\alpha\circ d \in \End(W)\tensor C^\infty[0,1]$. So the diagram above commutes when $\nabla=\alpha^{-1}\circ (d(\alpha)+\alpha\circ d)=d+\alpha^{-1}d(\alpha)$.

Hence we are looking for an invertible element $\alpha\in\End(W)\tensor C^\infty[0,1]$ satisfying $d(\alpha)=\alpha\omega$. 
The solution of this equation is given by a path-ordered exponential, namely, we set $\alpha = \sum_{n=0}^\infty\alpha_n$ where
$\alpha_0=1$ and where recursively $\alpha_{n+1}(t)=\int_0^t\alpha_n\omega$ for $n\geq0$. 
It follows easily by induction that $\norm{\alpha_n(t)}\leq \frac{t^n\norm{\omega}^n}{n!}$, so the series converges. Since $d\alpha_{n+1}=\alpha_n\omega$ we get $d\alpha=\alpha\omega$. In a similar way we can construct $\alpha'$ satisfying $d\alpha'=-\omega\alpha'$, and it is easy to see that this is the inverse of $\alpha$.

So $(W\tensor C^\infty[0,1],\nabla)$ is isomorphic to $(W\tensor C^\infty[0,1],d)$ and we conclude that $\pi^1(\Omgb[0,1])=0$.
\end{example}

\begin{example}\label{ex-M2C}
Let $\Omega^k \A=M_2(\mathbb C)$ for all $k$. Let $d$ be given by taken the graded commutator with the matrix $D=\left(\begin{smallmatrix}1&0\\0&-1\end{smallmatrix}\right)$. Explicitly, $d$ is given in even degrees by $\left(\begin{smallmatrix}a&b\\c&d\end{smallmatrix}\right)\to\left(\begin{smallmatrix}0&-2b\\2c&0\end{smallmatrix}\right)$ and in odd degrees by $\left(\begin{smallmatrix}a&b\\c&d\end{smallmatrix}\right)\to\left(\begin{smallmatrix}2a&0\\0&-2d\end{smallmatrix}\right)$. This is the noncommutative space corresponding to the set of two points that are identified.

The graded centre of this dga is just $Z_g(\Omega \A)=\mathbb C \oplus 0 \oplus \mathbb C \oplus 0 \oplus \cdots$ where $\mathbb C$ is embedded diagonally in $M_2(\mathbb C)$. Then $\C_{\flat}(Z_g(\Omega \A))$ is just equivalent to the category of vector spaces. The fundamental group is trivial.
\end{example}

\begin{example}\label{ex-C_C}
Consider the following graded commutative dga: let $\Omega^0\B=\mathbb C,\Omega^1\B=\mathbb C$ and $\Omega^n\B=0$ for $n\geq 2$, with $d=0$. Note that there is a unique point in $\widehat B$. An fgp $\Omega \B$-bimodule is simply a finite-dimensional vector space $V$ and a connection is a $\mathbb C$-linear map $\nabla:V\to V$, and it is always flat. So the category $\C_{\flat}(\Omega \B)$ is equivalent to the category of vector spaces with an endomorphism. This is in turn equivalent to the category of continuous representations of $\mathbb R$: the vector space $V$ with the endomorphism $\alpha$ corresponds to the representation $\pi:\mathbb R\to \End(V)$, $\pi(t)=\exp(t\alpha)$. All continuous representations of $\mathbb R$ are of this form by Lemma \ref{lem-reps_of_R} in the appendix. Since the category of all continuous representations of a topological group $\mathbb R$ is equivalent to the category of representations of the algebraic hull of $\mathbb R$ ({\em cf.} \cite[Ex. 2.33]{DM82}), the fundamental group of $\Omega \B$ is then the algebraic hull of $\mathbb R$.
\end{example}

\begin{example}\label{ex-tensor_product_with_C_C}
Let $\Omega \B$ be the dga from the previous example and let $\Omega \A$ be any graded commutative dga satisfying property Q, and let $p\in\widehat A$. Consider the graded tensor product $\Omega \A'=\Omega \A\tensor\Omega \B$. This is a dga, its modules are given by $(\Omega^0 \A')=\Omega \A$ and $\Omega^n \A'=\Omega^n \A \oplus \Omega^{n-1} \A$ for $n\geq 1$. Let $\E$ be an fgp $\A$-bimodule. A connection over $(\Omega \A')$ is given by a map $\nabla:\E \to \E\tensor\Omega^1 \A\oplus \E$. Write $\nabla=\nabla_ \A\oplus\nabla_\B$, where $\nabla_ \A:\E\to  \E\otimes\Omega^1 \A$ is a connection over $\Omega \A$ and $\nabla_\B:\Omega \E\to\Omega \E$ is an $\A$-linear map. The curvature of the connection $\nabla^2:\E\to  \E\tensor \Omega^2 \A \oplus \E \tensor\Omega^1 \A$ is then given by $\nabla_\A^2\oplus [\nabla_\B,\nabla_ \A]$. So $\nabla$ is flat if and only if $\nabla_ \A$ is flat and commutes with the endomorphism $\nabla_\B$. We get a series of equivalences
\begin{align*}
\C_{\flat}(\Omega \A')\simeq&\{\text{objects of }\C_{\flat}(\Omega \A)\text{ with an endomorphism}\}\\
\simeq&\{\text{objects of }\Rep(\pi^1(\Omega \A))\text{ with an endomorphism}\}\\
\simeq&\Rep(\pi^1(\Omega \A)\times\mathbb R).
\end{align*}
Hence the fundamental group of $\Omega \A'=\Omega \A\tensor\Omega \B$ at $p$ is (the algebraic hull of) $\pi^1(\Omega \A,p)\times\mathbb R$.
\end{example}

This leads to the following useful general result. 
\begin{proposition}
  \label{prop-product_with_wedge_V}
  Let $V$ be an $n$-dimensional vector space and consider the graded algebra $\exterior V$ with differential $d=0$. Then $\pi^1(\exterior V)$ is the algebraic hull of $\mathbb R^n$. More generally, if $\Omega \A$ is any graded commutative dga which satisfies property Q and $p\in \widehat {A}$, then the fundamental group of $\Omega \A\tensor\exterior V$ at $p$ is the algebraic hull of $\pi^1(\Omega \A,p)\times\mathbb R^n$.
\end{proposition}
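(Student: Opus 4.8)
The plan is to describe the category $\C_{\flat}(\Omega \A \tensor \exterior V)$ explicitly and recognise it as $\Rep$ of the asserted group scheme; this is the natural $n$-fold generalisation of Example~\ref{ex-tensor_product_with_C_C}, which is the case $n=1$. Write $\Omega\A' := \Omega\A\tensor\exterior V$. Since $\Omega\A$ is graded commutative and $\exterior V$ is graded commutative with trivial differential, $\Omega\A'$ is a graded commutative dga with $(\Omega\A')^0=\A$, $\Omega^1\A'=\Omega^1\A\oplus(\A\tensor V)$, $\Omega^2\A'=\Omega^2\A\oplus(\Omega^1\A\tensor V)\oplus(\A\tensor\exterior^2 V)$, and differential $d'=d_\A\tensor 1$. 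The first step is to observe that $\Omega\A'$ again satisfies property Q: for $a\in\A$ one has $d'a=d_\A a$ lying in the summand $\Omega^1\A$, so projecting an identity $d'a=\sum_i a_i\omega_i$ with $|a_i|\le a$ onto that summand reduces it to an identity of the same shape in $\Omega\A$, whence $a$ is $0$ or invertible. In particular $Z_g(\Omega\A')=\Omega\A'$ satisfies property Q and the degree-$0$ part is still $\A\subseteq A$, so $\pi^1(\Omega\A',p)$ is defined for $p\in\widehat A$.

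Next, by Remark~\ref{rem:reduce-to-0} an object of $\C(\Omega\A')$ is an fgp $\A$-module $\E$ with a connection $\nabla_0:\E\to\E\tensor_\A\Omega^1\A'$; decomposing along $\Omega^1\A'=\Omega^1\A\oplus(\A\tensor V)$ yields a connection $\nabla_\A$ on $\E$ over $\Omega\A$ together with a map $\E\to\E\tensor_\A(\A\tensor V)=\E\tensor V$, i.e.\ (after fixing a basis $e_1,\dots,e_n$ of $V$) endomorphisms $T_1,\dots,T_n\in\End_\A(\E)$, $\A$-linearity being forced by the Leibniz rule exactly as in Example~\ref{ex-tensor_product_with_C_C}. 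A direct curvature computation, parallel to the one in that example but keeping track of the extra exterior directions, shows that the three components of $\nabla^2$ in $\E\tensor\Omega^2\A$, in $\E\tensor(\Omega^1\A\tensor V)$, and in $\E\tensor(\A\tensor\exterior^2 V)$ are respectively $\nabla_\A^2$, the $n$ mixed-curvature terms $(\nabla_\A T_i-T_i\nabla_\A)\tensor e_i$ measuring the failure of each $T_i$ to commute with $\nabla_\A$, and $\sum_{i<j}[T_i,T_j]\tensor(e_i\wedge e_j)$. Hence $\nabla$ is flat if and only if $\nabla_\A$ is flat, each $T_i$ is a morphism in $\C_{\flat}(\Omega\A)$, and the $T_i$ pairwise commute. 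Thus $\C_{\flat}(\Omega\A')$ is equivalent to the category of objects of $\C_{\flat}(\Omega\A)$ equipped with $n$ pairwise commuting endomorphisms.

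Finally, translate through the equivalence $\C_{\flat}(\Omega\A)\simeq\Rep(G)$ with $G:=\pi^1(\Omega\A,p)$: this identifies $\C_{\flat}(\Omega\A')$ with the category of pairs consisting of a representation $(W,\rho)$ of $G$ and $n$ pairwise commuting endomorphisms of $W$ commuting with $\rho$. By Lemma~\ref{lem-reps_of_R} (as in Example~\ref{ex-C_C}) such an $n$-tuple $(T_1,\dots,T_n)$ is the same datum as a continuous representation $(t_1,\dots,t_n)\mapsto\exp(\sum_i t_iT_i)$ of $\mathbb R^n$ on $W$, the pairwise commutativity being precisely what makes this a homomorphism and restriction to the coordinate axes recovering the $T_i$; moreover this representation commutes with $\rho$. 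Hence $\C_{\flat}(\Omega\A')$ is equivalent to the category of continuous finite-dimensional representations of $G\times\mathbb R^n$, which — since the category of continuous representations of a topological group is $\Rep$ of its algebraic hull (\cite[Ex.~2.33]{DM82}, cf.\ Example~\ref{ex-C_C}) and $G$ is already pro-algebraic — is $\Rep$ of the algebraic hull of $\pi^1(\Omega\A,p)\times\mathbb R^n$. This gives the general statement. The first statement is the special case $\Omega\A=\mathbb C$ (concentrated in degree $0$, $d=0$), which satisfies property Q and has trivial fundamental group, so that $\pi^1(\exterior V)$ is the algebraic hull of $\mathbb R^n$.

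The main obstacle is the Tannakian bookkeeping in the last paragraph: turning the elementary description ``object with $n$ pairwise commuting endomorphisms'' into a clean statement about $\Rep(G\times\mathbb R^n)$. One must check carefully that (i) the passage between commuting endomorphism tuples and continuous $\mathbb R^n$-representations is a genuine, functorial equivalence of categories in both directions, resting on Lemma~\ref{lem-reps_of_R} together with the fact that $\mathbb R^n$ is generated as a group by its coordinate axes; and (ii) a commuting pair consisting of a representation of the affine group scheme $G$ and a continuous representation of $\mathbb R^n$ on the same space assembles into a representation of $G\times\mathbb R^n$, i.e.\ $\Rep(G\times\mathbb R^n)$ is the Deligne tensor product $\Rep(G)\boxtimes\Rep^{\mathrm{cont}}(\mathbb R^n)$. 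Both facts are standard, but that is where the actual content lies; the curvature computation and the property-Q check are routine. An essentially equivalent alternative is to induct on $n$ via $\exterior V\cong\exterior V'\tensor\exterior(\mathbb C)$ and apply Example~\ref{ex-tensor_product_with_C_C} to the dga $\Omega\A\tensor\exterior V'$ (which satisfies property Q by the check above), at the cost of verifying that iterated algebraic hulls collapse as expected.
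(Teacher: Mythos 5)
Your proof is correct, but it takes a genuinely different route from the paper's. The paper disposes of the proposition in one sentence: $\exterior V$ is the $n$-fold tensor product of the dga $\Omega\B$ of Example~\ref{ex-C_C}, so one simply iterates Example~\ref{ex-tensor_product_with_C_C} $n$ times (and ``similar reasoning'' for the relative statement). You instead do the $n$-dimensional version of the curvature computation in one pass, decomposing $\Omega^1\A' = \Omega^1\A \oplus (\A\tensor V)$, extracting the triple of conditions (flatness of $\nabla_\A$, each $T_i$ a morphism in $\C_\flat(\Omega\A)$, pairwise commutativity of the $T_i$) from the three graded components of $\nabla^2$, and then identifying the resulting category with $\Rep(G\times\mathbb R^n)$. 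Both are sound, but each buys something: your direct approach makes the category $\C_\flat(\Omega\A\tensor\exterior V)$ explicitly visible and avoids the iterated-algebraic-hull collapse that the paper's approach quietly invokes, while the paper's iterative approach is shorter on the page precisely because it shifts that burden onto the already-proved $n=1$ case. You also explicitly verify property~Q for $\Omega\A\tensor\exterior V$ — a point the paper's one-liner does not address, even though iterating Example~\ref{ex-tensor_product_with_C_C} requires knowing that the intermediate dgas $\Omega\A\tensor\exterior^{\leq k}$ still satisfy it. Your closing paragraph correctly identifies the remaining Tannakian bookkeeping (the $\Rep(G\times\mathbb R^n)\simeq\Rep(G)\boxtimes\Rep^{\mathrm{cont}}(\mathbb R^n)$ identification and the role of Lemma~\ref{lem-reps_of_R}), which the paper implicitly absorbs into its reference to Example~\ref{ex-tensor_product_with_C_C}; the alternative inductive strategy you sketch at the end is exactly the paper's argument.
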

\proof
The dga $\exterior V$ is isomorphic to the $n$-fold tensor product of the dga in Example \ref{ex-C_C}, so by Example \ref{ex-tensor_product_with_C_C} we see that $\pi^1(\exterior V)$ is the algebraic hull of $\mathbb R^n$. Similar reasoning leads to the second statement. 
\endproof

\section{Some properties of the fundamental group}
\label{sect:props}
In this section we will establish some of the crucial properties that one would like a fundamental group to possess. This includes base point invariance, functoriality, homotopy invariance and Morita invariance.

\subsection{Base point invariance}
As one might expect, different base points give rise to isomorphic fundamental groups, at least provided that they are joined by a smooth path. In this case we will also simply write $\pi^1(\Omgb\mathcal A)$, omitting the base point from the notation.

\begin{proposition}\label{prop-basepointinvariance}
Let $\Omgb\mathcal A$ be a $*$-dga such that $Z_g(\Omgb\mathcal A)$ satisfies property Q. Let $p,q\in\widehat{Z_g(\mathcal A)}$ be base points. Suppose we have a $*$-homomorphism $\gamma:Z_g(\Omgb\mathcal A)\to\Omgb[0,1]$ satisfying $\ev_0\circ\gamma=p,\ev_1\circ\gamma=q$ where $\ev_t$ denotes evaluation at $t$. Then there exists an isomorphism $\pi^1(\Omgb\mathcal A,p)\cong \pi^1(\Omgb\mathcal A,q)$.
\end{proposition}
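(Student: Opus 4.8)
The strategy is to use the path $\gamma$ to transport the Tannakian data from the base point $p$ to the base point $q$, thereby producing an isomorphism of fibre functors which then induces the desired isomorphism of automorphism group schemes. First I would reduce to the graded commutative case: by Theorem \ref{thm-reduction_to_commutative_case} we may replace $\Omega \A$ by $Z_g(\Omega \A)$, so without loss of generality $\Omega \A$ is graded commutative and satisfies property Q, and the fibre functors $\omega_p, \omega_q : \C_\flat(\Omega \A) \to \Vec$ send $\E$ to $\E_p = \E \tensor_\A \mathbb C$ and $\E_q = \E \tensor_\A \mathbb C$ respectively. Since $\pi^1(\Omega \A, p)$ and $\pi^1(\Omega \A, q)$ are the automorphism group schemes of $\omega_p$ and $\omega_q$, it suffices by standard Tannakian formalism (see \cite{Del90}) to construct a natural isomorphism of tensor functors $\omega_p \cong \omega_q$; conjugation by such an isomorphism then yields an isomorphism of the respective automorphism group schemes.

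\textbf{Construction of the isomorphism.} Given an object $(\E, \nabla)$ of $\C_\flat(\Omega \A)$, pull it back along $\gamma$: form the $C^\infty[0,1]$-module $\gamma^* \E := \E \tensor_{Z_g(\A)} C^\infty[0,1]$ (using $\gamma$ on degree-$0$ parts), equipped with the pulled-back connection $\gamma^*\nabla$, which is automatically flat because $\Omega^2[0,1] = 0$ — it is hence an object of $\C_\flat(\Omega[0,1])$. By Example \ref{ex-[0,1]}, every flat connection on a (free) $C^\infty[0,1]$-module is isomorphic to the trivial one via a path-ordered exponential $\alpha$, so in particular $(\gamma^*\E)_0 \cong (\gamma^*\E)_1$ as vector spaces, and this isomorphism is precisely parallel transport along $\gamma$ from $t=0$ to $t=1$. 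Since $\ev_0 \circ \gamma = p$ and $\ev_1 \circ \gamma = q$, we have $(\gamma^* \E)_0 = \E_p$ and $(\gamma^*\E)_1 = \E_q$, so parallel transport gives a linear isomorphism $\tau_\E : \omega_p(\E) \xrightarrow{\sim} \omega_q(\E)$. Naturality in $\E$ follows because a morphism $\phi : \E \to \F$ in $\C_\flat(\Omega \A)$ pulls back to a morphism $\gamma^*\phi$ commuting with the connections, hence intertwines the parallel transports (the path-ordered exponentials are constructed canonically from the connection $1$-forms, so they commute with connection-preserving maps). Compatibility with tensor products follows because parallel transport for $\gamma^*(\E \tensor \F)$ is the tensor product of the parallel transports for $\gamma^*\E$ and $\gamma^*\F$ — this is immediate from the formula \eqref{eq:tensor-conn} for the tensor product connection and the multiplicativity of path-ordered exponentials. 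Likewise one checks compatibility with the unit object and with duals, so $\tau : \omega_p \Rightarrow \omega_q$ is an isomorphism of fibre functors in the Tannakian sense.

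\textbf{Conclusion and the main obstacle.} Once $\tau : \omega_p \xrightarrow{\sim} \omega_q$ is established as an isomorphism of tensor functors, the induced map on automorphism group schemes $g \mapsto \tau \circ g \circ \tau^{-1}$ is an isomorphism $\pi^1(\Omega \A, p) \cong \pi^1(\Omega \A, q)$, completing the proof. I expect the main obstacle to be verifying cleanly that the parallel-transport isomorphism $\tau_\E$ is a genuine natural transformation compatible with the full tensor-category structure, rather than merely a collection of vector-space isomorphisms: one needs to know that the path-ordered exponential $\alpha$ of Example \ref{ex-[0,1]} can be chosen functorially in the object (it can, since it solves a canonical ODE determined by $\gamma^*\nabla$), and that this functorial choice respects $\tensor$, the unit, and $(-)^\vee$. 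Phrased abstractly, the point is that pullback along $\gamma$ is a tensor functor $\gamma^* : \C_\flat(\Omega \A) \to \C_\flat(\Omega[0,1])$ and, since $\pi^1(\Omega[0,1])$ is trivial (Example \ref{ex-[0,1]}), the two fibre functors $\ev_0, \ev_1$ on $\C_\flat(\Omega[0,1])$ are canonically isomorphic as tensor functors; composing with $\gamma^*$ transports this canonical isomorphism to the desired $\omega_p \cong \omega_q$. This reduces the whole argument to the already-proven triviality of $\pi^1(\Omega[0,1])$ together with naturality bookkeeping, which is routine.
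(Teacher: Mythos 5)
Your proof is correct and follows essentially the same route as the paper: reduce to the graded commutative case, push forward along $\gamma$ to $\C_\flat(\Omgb[0,1])$, and use Example \ref{ex-[0,1]} to identify the two evaluation functors $\ev_0$ and $\ev_1$, hence the two fibre functors at $p$ and $q$. The paper packages the parallel-transport isomorphism more cleanly by interposing the functor $n(\E,\nabla)=\ker(\nabla)$ (for which naturality and tensor-compatibility are manifest, and which is isomorphic to both $v_0$ and $v_1$ by Example \ref{ex-[0,1]}), whereas you build the transport map directly from the path-ordered exponential and then verify naturality and $\otimes$-compatibility by hand — this is more work but amounts to the same thing, and your abstract reformulation at the end should really be read as a restatement of the explicit construction rather than a consequence of $\pi^1(\Omgb[0,1])$ being trivial (triviality of the group alone does not single out a preferred isomorphism $\ev_0\cong\ev_1$; one still needs the $\ker$ functor or parallel transport to produce it).
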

\begin{proof}
We may assume without loss of generality that $\Omgb\mathcal A$ is graded commutative. Consider the following diagram:
\[\begin{tikzcd}[column sep=huge,row sep=huge]
\mathcal C_{\flat}(\Omega \mathcal A)\arrow[r,"\gamma"]\arrow[d,shift right,"p" left]\arrow[d,shift left,"q"] & \mathcal C_{\flat}(\Omega[0,1]) \arrow[dl,shift right,"v_0" above]\arrow[dl,shift left,"v_1" below]\\
\Vec
\end{tikzcd}\]
Here the horizontal map $\gamma$ denotes, by abuse of notation, the functor that sends $\mathcal E$ to $\mathcal E\tensor_\gamma C^\infty[0,1]$, and similar for the maps $p$ and $q$. The functor $v_0$ sends $\mathcal E$ to $\mathcal E_0$ and similar for $v_1$. Let $n:\mathcal C_{\flat}(\Omgb[0,1])\to \Vec$ be the functor sending $(\mathcal E,\nabla)$ to the vector space $\ker(\nabla)$. By Example \ref{ex-[0,1]}, we have $\ker(\nabla)\tensor C^\infty[0,1]\xrightarrow\sim \mathcal E$. Therefore the map $\ker(\nabla)\to \mathcal E_0$ given by localisation at $0$ is an isomorphism. This gives a natural isomorphism between $n$ and $v_0$. Similarly we have a natural isomorphism from $n$ to $v_1$. So the functors $v_0$ and $v_1$ are naturally isomorphic, and it follows that the fibre functors from $p$ and $q$ are also naturally isomorphic. Then their group schemes of automorphisms are isomorphic as well, giving the isomorphism $\pi^1(\Omgb\mathcal A,p)\cong \pi^1(\Omgb\mathcal A,q)$.
\end{proof}

It is not known whether base point invariance holds without the additional assumption in Proposition \ref{prop-basepointinvariance}

\subsection{Functoriality of the fundamental group}
For graded commutative spaces there is a good notion of functoriality for the fundamental group. In this section we will address the question for which maps between dga's there is an induced map between the fundamental groups. 

We start by observing that the fundamental group $\pi^1(\Omega \A,p)$ is defined in terms of the dga $\Omega \A$ and a character $p$ on the center $Z_g(\Omega \A)$, interpreted as the base point. Now, if $\phi:  \Omega \A\to\Omega \B$ is a map of dga's one can only expect functoriality on the corresponding fundamental groups to have any meaning at all if base points are mapped to base points. In other words, characters of the center should be mapped to characters of the center. In other words, it is crucial to demand that the map $\phi$ maps the center to the center. Under such conditions, we can establish the following functorial property of the fundamental group of dga's. 
\begin{proposition}\label{lem-functoriality_graded_commutative}
Let $\Omega \A$ and $\Omega \B$ be $*$-dga's such that their graded centers satisfy property Q. Let $\phi:\Omega \A\to\Omega \B$ be a degree 0 algebra morphism satisfying $\phi(d\alpha)=d(\phi(\alpha))$ for all $\alpha\in\Omega \A$ and that $\phi(Z_g(\Omega \A)) \subseteq Z_g(\Omega \B)$. Let $q\in \widehat {Z_g( \B)}$ and $p=\phi^*(q)\in \widehat {Z_g(\A)}$. Then $\phi$ induces a map $\pi^1\phi:\pi^1(\Omega \B,q)\to \pi^1(\Omega \A,p)$.
\end{proposition}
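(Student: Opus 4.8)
The plan is to produce $\pi^1\phi$ through the Tannakian dictionary: I will construct a $\mathbb C$-linear, exact tensor functor $\Phi\colon\C_{\flat}(\Omega\A)\to\C_{\flat}(\Omega\B)$ going in the same direction as $\phi$, show that it intertwines the two fibre functors (this is exactly where $p=\phi^*(q)$ is used), and then invoke the standard fact that such a functor induces a homomorphism of the associated affine group schemes in the opposite direction, namely $\pi^1\phi\colon\pi^1(\Omega\B,q)\to\pi^1(\Omega\A,p)$ (see e.g. \cite[Theorem 2.11]{Del90}).

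First I would reduce to the graded commutative case. Since $\phi$ commutes with $d$ and maps $Z_g(\Omega\A)$ into $Z_g(\Omega\B)$, it restricts to a morphism of graded commutative $*$-dga's $Z_g(\Omega\A)\to Z_g(\Omega\B)$ taking $p$ to $q$; by the equivalences of Theorem \ref{thm-reduction_to_commutative_case} it suffices to build $\Phi$ between $\C_{\flat}(Z_g(\Omega\A))$ and $\C_{\flat}(Z_g(\Omega\B))$. So I may assume $\Omega\A$ and $\Omega\B$ are graded commutative and, by Remark \ref{rem:reduce-to-0}, describe an object of $\C_{\flat}(\Omega\A)$ as an fgp $\A$-module $\E$ with a flat connection $\nabla_0\colon\E\to\E\tensor_\A\Omega^1\A$. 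Then $\Phi$ sends $(\E,\nabla_0)$ to the base change $\E_\B:=\E\tensor_\A\B$, which is again fgp since base change preserves direct summands of free modules, equipped with the pulled-back connection $\nabla_0^\B(e\tensor b)=(\phi\tensor\id)(\nabla_0 e)\cdot b+e\tensor db$, where $\phi\colon\Omega^1\A\to\Omega^1\B$ is used to regard $\nabla_0 e$ inside $\E_\B\tensor_\B\Omega^1\B$. That $\phi$ is an algebra map commuting with $d$ makes this well defined and a genuine connection, and flatness is preserved: regarding $(\B,d)$ as a flat object, the curvature of $\nabla_0^\B$ splits (as in Lemma \ref{lem-curvature_tensor_product}) into the base change of $(\nabla_0)^2$ plus a $d^2$ term, both zero. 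On morphisms $\Phi$ is just $-\tensor_\A\B$.

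Next I would check that $\Phi$ is a functor of neutral Tannakian categories. It is $\mathbb C$-linear and exact (a base change of modules over commutative rings, applied to projective hence flat modules), it sends the unit $(\A,d)$ to $(\B,d)$, and the canonical isomorphism $(\E\tensor_\A\F)_\B\cong\E_\B\tensor_\B\F_\B$ intertwines the tensor-product connections and is compatible with the associativity, commutativity and unit constraints. For the fibre functors: since $p=\phi^*(q)$, the $\A$-module $\mathbb C_p$ is the restriction along $\phi$ of the $\B$-module $\mathbb C_q$, so
\[
\omega_q(\Phi(\E))=(\E\tensor_\A\B)\tensor_\B\mathbb C_q=\E\tensor_\A(\B\tensor_\B\mathbb C_q)=\E\tensor_\A\mathbb C_p=\omega_p(\E),
\]
naturally in $\E$, which identifies $\omega_q\circ\Phi$ with $\omega_p$. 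Applying the Tannakian formalism then yields the morphism $\pi^1\phi\colon\pi^1(\Omega\B,q)\to\pi^1(\Omega\A,p)$.

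The main obstacle I anticipate is bookkeeping rather than conceptual: verifying that $\Phi$ genuinely is a tensor functor, i.e.\ that the evident isomorphism $(\E\tensor_\A\F)_\B\cong\E_\B\tensor_\B\F_\B$ really does respect both the tensor-product connections and all the constraint isomorphisms of the tensor category, requires a careful check of the Leibniz rules and signs. Everything else — well-definedness of the pulled-back connection, preservation of flatness, $\mathbb C$-linearity and exactness, and the fibre-functor compatibility — is short once the reduction to the graded commutative setting has been made.
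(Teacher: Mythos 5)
Your proposal is correct and follows the same route as the paper: reduce to the graded commutative case via Theorem \ref{thm-reduction_to_commutative_case}, base-change along $\phi$ to get a functor $\C_{\flat}(\Omega\A)\to\C_{\flat}(\Omega\B)$ commuting with the fibre functors, and then pull back automorphisms of the fibre functor. You spell out the tensor-functor and exactness verifications that the paper leaves implicit, but the underlying argument is the same.
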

\begin{proof}
  Since $\C(\Omega \A)$ is equivalent to $\C(Z_g(\Omega \A))$ (and the same for $\Omega \B$ instead of $\Omega \A$) and $\phi$ induces a map of dga's from $Z_g(\Omega \A) \to Z_g(\Omega \B)$ we may assume without loss of generality that our dga's are graded commutative. Hence, if $ \E$ is an fgp $ \A$-module then $ \E\tensor_\A  \B$ is an fgp $ \B$-module. A flat connection $\nabla: \E\to  \E\tensor\Omega^1 \A$ gives a flat connection $\tilde\nabla: \E\tensor_{ \A} \B\to \E\tensor_ \A  \Omega^1 \B$, given by $\tilde\nabla(e\tensor b)=\nabla(e)b+e\tensor db$. So we get a map $\C(\Omega \A)\to \C(\Omega \B)$. It is easy to see that it is functorial and also that it commutes with the fibre functors. Then every automorphism of the fibre functor $\C(\Omega \B)\to\Vec$ can be pulled back to an automorphism of the fibre functor $\C(\Omega \A)\to\Vec$. So we get a map $\pi^1\phi:\pi^1(\Omega \B,q)\to\pi^1(\Omega \A,p)$.
\end{proof}

\subsection{Homotopy invariance}

In this subsection we will show that homotopic maps $\phi_0,\phi_1:(\Omgb\mathcal B,p)\to (\Omgb\mathcal A,q)$ give rise to the same map $\pi^1\phi_0=\pi^1\phi_1:\pi^1(\Omgb \A,q)\to\pi^1(\Omgb \B,p)$.


\begin{lemma}\label{lem-nabla-isomorphic-d}
Let $\mathcal E\tensor C^\infty[0,1]$ be an fgp module over $\mathcal A\tensor C^\infty[0,1]$, where $\mathcal E$ is an fpg module over $\mathcal A$. Let $\nabla:\mathcal E\tensor C^\infty[0,1]\to\mathcal E\tensor\Omega^1[0,1]$ be a connection over $\Omgb[0,1]$ that is $\mathcal A$-linear. Then there is an $\mathcal A\tensor C^\infty[0,1]$-linear isomorphism $\alpha:\mathcal E\tensor C^\infty[0,1]\to \mathcal E\tensor C^\infty[0,1]$ such that $\alpha\circ \nabla =d\circ\alpha$.
\end{lemma}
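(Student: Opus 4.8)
The plan is to mimic the argument of Example \ref{ex-[0,1]}, now carried out in a family over $\mathcal A$. Write $\nabla = d\otimes\id + \omega$, where, because $\nabla$ is a connection over $\Omega[0,1]$ that is $\mathcal A$-linear, the difference $\omega = \nabla - d\otimes\id$ is $\mathcal A\tensor C^\infty[0,1]$-linear, i.e. an element of $\End_{\mathcal A}(\mathcal E)\tensor C^\infty[0,1]\,dx$. As in that example, the condition $\alpha\circ\nabla = d\circ\alpha$ is equivalent, once $\alpha$ is viewed as an element of $\End_{\mathcal A}(\mathcal E)\tensor C^\infty[0,1]$, to the first-order ODE $d(\alpha) = \alpha\,\omega$ (where now $d$ differentiates only the $C^\infty[0,1]$-variable), together with the demand that $\alpha$ be invertible in $\End_{\mathcal A}(\mathcal E)\tensor C^\infty[0,1]$.

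Next I would solve this ODE by the path-ordered exponential, exactly as in Example \ref{ex-[0,1]}: set $\alpha = \sum_{n\ge 0}\alpha_n$ with $\alpha_0 = 1$ and $\alpha_{n+1}(t) = \int_0^t \alpha_n\,\omega$. The only point requiring care is convergence, since the coefficients now live in the algebra $\End_{\mathcal A}(\mathcal E)$ rather than in a matrix algebra over $\mathbb C$. Embedding $\mathcal A$ densely in a $C^*$-algebra (or simply picking any submultiplicative Banach-algebra norm on $\End_{\mathcal A}(\mathcal E)$ — one always exists since $\mathcal E$ is fgp, hence $\End_{\mathcal A}(\mathcal E)$ is a corner of a matrix algebra over $\mathcal A$), the same estimate $\norm{\alpha_n(t)}\le t^n\norm{\omega}^n/n!$ holds, so the series converges uniformly in $t\in[0,1]$ to a smooth path $\alpha$; differentiating term by term gives $d\alpha = \alpha\omega$. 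Constructing in the same way $\alpha'$ with $d\alpha' = -\omega\alpha'$ and $\alpha'(0)=1$, one checks $d(\alpha'\alpha) = 0$ and $(\alpha'\alpha)(0) = 1$, so $\alpha'\alpha = 1$ identically; symmetrically $\alpha\alpha' = 1$. Hence $\alpha$ is the desired $\mathcal A\tensor C^\infty[0,1]$-linear isomorphism, and it satisfies $\alpha\circ\nabla = d\circ\alpha$ by construction.

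The main obstacle — and it is a minor one — is purely that $\mathcal A$ need not be a $C^*$-algebra, so one must be slightly careful that the normed-algebra estimate used for convergence is legitimate; the fgp hypothesis on $\mathcal E$ is exactly what lets us reduce $\End_{\mathcal A}(\mathcal E)$ to a subalgebra of $M_n(\mathcal A)$ and borrow a norm from a $C^*$-completion of $\mathcal A$. Everything else is a routine transcription of the $[0,1]$-case, the key structural input being that $\omega$ and $\alpha$ are $\mathcal A$-linear so that the whole computation takes place fibrewise over the $[0,1]$-direction while the $\mathcal A$-structure is simply carried along.
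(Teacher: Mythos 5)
Your argument follows the same path-ordered exponential construction as the paper's proof (which itself defers to Example \ref{ex-[0,1]}), so the approach is the intended one, and your attention to the convergence issue---embedding $\End_{\mathcal A}(\mathcal E)$ in a corner of $M_n(\mathcal A)$ and borrowing a norm from the $C^*$-completion---is a useful point that the paper leaves implicit.

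There is, however, a small but genuine slip in the invertibility step. With $d\alpha = \alpha\omega$ and $d\alpha' = -\omega\alpha'$, the Leibniz rule gives
\[
d(\alpha\alpha') = \alpha\omega\alpha' - \alpha\omega\alpha' = 0,
\qquad
d(\alpha'\alpha) = -\omega\,\alpha'\alpha + \alpha'\alpha\,\omega,
\]
so it is $\alpha\alpha'$, not $\alpha'\alpha$, whose derivative vanishes; your claim ``$d(\alpha'\alpha)=0$'' and the ensuing ``symmetrically'' are not correct as stated. This matters here because, unlike in Example \ref{ex-[0,1]} where $\End(W)$ is a matrix algebra over $\mathbb C$ and a one-sided inverse is automatically two-sided, $\End_{\mathcal A}(\mathcal E)\tensor C^\infty[0,1]$ is not such an algebra. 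To close the gap, set $\beta = \alpha'\alpha - 1$; then $\beta(0)=0$ and $d\beta = -\omega\beta + \beta\omega$, so $\|\beta'(t)\|\leq 2\|\omega\|\,\|\beta(t)\|$ in the Banach-algebra norm you have already set up, and Gronwall forces $\beta\equiv 0$. (Alternatively, $\alpha'\alpha$ is a smooth path of idempotents starting at $1$, hence identically $1$.) With that repair the argument is complete and coincides with the paper's.
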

\begin{proof}
The proof is essentially the same as that of Example \ref{ex-[0,1]}. Write $\nabla=d+\omega$ with $\omega:\mathcal E\tensor C^\infty[0,1]\to\mathcal E\tensor\Omega^1[0,1]$ an $\mathcal A\tensor C^\infty[0,1]$-linear map. We want to construct $\alpha$ such that $d\alpha=\alpha\omega$, viewing $\alpha$ as an element of $\End_\mathcal A \E\tensor C^\infty[0,1]$. Let $\alpha_0=1$, and define recursively $\alpha_{n+1}(t)=\int_0^t\alpha_n\omega$. Then $\alpha=\sum_{n=0}^\infty\alpha_n$ is well-defined and invertible as in Example \ref{ex-[0,1]} and it satisfies $d\alpha=\alpha\omega$.
\end{proof}

\begin{lemma}\label{lem-modules_Atimes01}
Let $\mathcal E$ be an fgp module over $\mathcal A\tensor C^\infty[0,1]$ with a connection $\nabla_2:\mathcal E\to  \mathcal E\tensor\Omega^1[0,1]$ over $\Omgb[0,1]$ that is $\mathcal A$-linear. Then $\ker(\nabla_2)$ is an fgp $\mathcal A$-submodule of $\mathcal E$ and we have a natural isomorphism $\ker(\nabla_2)\tensor C^\infty[0,1]\xrightarrow\sim \mathcal E$ given by multiplication.
\end{lemma}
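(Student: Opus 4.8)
The plan is to reduce the statement to the situation already analysed in Lemma~\ref{lem-nabla-isomorphic-d}. First I would extend $\mathcal E$ to a free module: since $\mathcal E$ is fgp over $\mathcal A\tensor C^\infty[0,1]$, pick $\mathcal F$ with $\mathcal E\oplus\mathcal F\cong(\mathcal A\tensor C^\infty[0,1])^n$ for some $n$. Equip $\mathcal F$ with the Grassmannian connection $\nabla^{\mathcal F}$ associated to the projection onto $\mathcal F$ (as in the proof of Theorem~\ref{thm-abelian}), which is automatically $\mathcal A$-linear and flat over $\Omgb[0,1]$, and form the direct sum connection on $(\mathcal A\tensor C^\infty[0,1])^n=\mathcal E\oplus\mathcal F$. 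This reduces the problem to the free case $\mathcal E_0\tensor C^\infty[0,1]$ with an $\mathcal A$-linear connection $\nabla$ over $\Omgb[0,1]$, where $\mathcal E_0$ is fgp over $\mathcal A$ (in fact we may take $\mathcal E_0 = \mathcal A^n$); here Lemma~\ref{lem-nabla-isomorphic-d} applies directly.

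Next, by Lemma~\ref{lem-nabla-isomorphic-d} there is an $\mathcal A\tensor C^\infty[0,1]$-linear isomorphism $\alpha$ with $\alpha\circ\nabla = d\circ\alpha$, hence $\alpha$ restricts to an isomorphism $\ker(\nabla)\xrightarrow\sim\ker(d)$. But $\ker(d)$ inside $\mathcal E_0\tensor C^\infty[0,1]=\mathcal E_0\tensor C^\infty[0,1]$, with $d$ acting only on the $C^\infty[0,1]$ factor, is exactly $\mathcal E_0\tensor 1\cong\mathcal E_0$: the constant functions are the only ones killed by $d=\frac{\partial}{\partial t}dt$ on $[0,1]$. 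Therefore $\ker(\nabla)\cong\mathcal E_0$ is fgp over $\mathcal A$, and the multiplication map $\mathcal E_0\tensor C^\infty[0,1]\to\mathcal E_0\tensor C^\infty[0,1]$ is the isomorphism $\ker(d)\tensor C^\infty[0,1]\xrightarrow\sim\mathcal E_0\tensor C^\infty[0,1]$, which transported through $\alpha$ becomes the desired natural isomorphism $\ker(\nabla)\tensor C^\infty[0,1]\xrightarrow\sim\mathcal E$.

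To get this back for the original (non-free) $\mathcal E$, I would observe that $\ker$ of the direct sum connection on $\mathcal E\oplus\mathcal F$ splits as $\ker(\nabla_2)\oplus\ker(\nabla^{\mathcal F})$, and intersecting the free-case isomorphism with the direct summand $\mathcal E$ gives that $\ker(\nabla_2)$ is a direct summand of an fgp $\mathcal A$-module, hence fgp, and that the multiplication map $\ker(\nabla_2)\tensor C^\infty[0,1]\to\mathcal E$ is an isomorphism as the restriction of an isomorphism to a compatible direct summand. Naturality in $(\mathcal E,\nabla_2)$ is clear since the map is just multiplication, which commutes with any $\mathcal A\tensor C^\infty[0,1]$-linear morphism intertwining the connections. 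The main obstacle is the bookkeeping in the reduction to the free case — ensuring that the Grassmannian connection on the complement $\mathcal F$ is genuinely $\mathcal A$-linear over $\Omgb[0,1]$ so that Lemma~\ref{lem-nabla-isomorphic-d} applies to the sum, and that the resulting splitting of kernels is compatible with the multiplication maps; once that is set up, the rest is essentially the computation already done in Example~\ref{ex-[0,1]}.
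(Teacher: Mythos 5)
Your proposal matches the paper's proof step for step: both extend $\mathcal E$ by a complementary fgp module $\mathcal F$ with its (automatically $\mathcal A$-linear) Grassmannian connection, apply Lemma~\ref{lem-nabla-isomorphic-d} to trivialise the direct-sum connection on the resulting free module, identify its kernel with $W\tensor\mathcal A$, and then intersect back with the summand $\mathcal E$ to get that $\ker(\nabla_2)$ is fgp and that multiplication is an isomorphism. There is no substantive difference, only that you make the bookkeeping (compatibility of the kernel splitting with multiplication, and the $\mathcal A$-linearity of the Grassmannian connection) slightly more explicit than the paper does.
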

\begin{proof}
Let $\mathcal F$ be another fgp module over $\mathcal A\tensor C^\infty[0,1]$ such that $\mathcal E\oplus \mathcal F$ is free. We can choose a (Grassmannian) connection $\nabla^\mathcal F:\mathcal F\to \mathcal F\tensor\Omega^1[0,1]$ over $\Omgb[0,1]$ that is $\mathcal A$-linear. Now the free module $\mathcal E\oplus\mathcal F$ has the connection $\nabla_2\oplus\nabla^\mathcal F$. By lemma \ref{lem-nabla-isomorphic-d} we may identify $\mathcal E\oplus\mathcal F$ with $W\tensor\mathcal A\tensor C^\infty[0,1]$ for some vector space $W$ with the connection given by $d:W\tensor\mathcal A\tensor C^\infty[0,1]\to W\tensor\mathcal A\tensor\Omega^1[0,1]$. Now note that $\ker(d)=W\tensor\mathcal A$ and the map $\ker(d)\tensor C^\infty[0,1]\to \mathcal E\oplus\mathcal F$ is an isomorphism. It follows that $\ker(\nabla_2)=\ker(d)\cap\mathcal E$ is an fgp $\mathcal A$-module and the map $\ker(\nabla_2)\tensor C^\infty[0,1]\to \mathcal E$ is an isomorphism.
\end{proof}

We can use this lemma to prove a version of homotopy invariance for $\pi^1$. First we give the definition of homotopy for morphisms of dga's.

\begin{definition}\label{def-homotopy}
Let $\Omgb\mathcal A$ and $\Omgb\mathcal B$ be $*$-dga's whose graded centres satisfy property Q and let $p\in\widehat {Z_g(\A)},q\in\widehat {Z_g(\B)}$ be base points. Two maps $\phi_0,\phi_1:(\Omgb\mathcal B,q)\to (\Omgb\mathcal A,p)$ are called homotopic if there exists a map
\[H:\Omgb\mathcal B\to \Omgb\mathcal A\tensor\Omgb[0,1]\]
that satisfies the following conditions:
\begin{itemize}
    \item the map $H$ sends graded centre to graded centre;
    \item it satisfies $\ev_t\circ H=\phi_t$ for $t =0,1$, where $\ev_t:\Omgb[0,1]\to\mathbb C$ denotes evaluation at $t$;
    \item the diagram
\[\begin{tikzcd}
Z_g(\Omgb\mathcal B) \arrow[r,"H"]\arrow[d,"q"] & Z_g(\Omgb\mathcal A)\tensor\Omgb[0,1]\arrow[d,"{p\tensor\Omgb[0,1]}"]\\
\mathbb C\arrow[r,"z\to\text{const}_z"] & \Omgb[0,1]
\end{tikzcd}\]
commutes.
\end{itemize}
\end{definition}

\begin{remark}
Considering the diagram above in grade 0, we see that it means that $H$ pulls back the point $p\times t\in \widehat{Z_g(\A)}\times [0,1]$ to the point $q$ for all $t\in[0,1]$. If $Z_g(\Omega^1 \B)$ is generated by elements of the form $b_0\ db_1$, this is an equivalent condition.
\end{remark}

\begin{theorem}\label{thm-homotopy_invariance}
Let $\Omgb\mathcal A$ and $\Omgb\mathcal B$ be $*$-dga's whose graded centres satisfy property Q and let $p\in\widehat {Z_g(\A)},q\in\widehat {Z_g(\B)}$ be base points. Let $\phi_0,\phi_1:(\Omgb\mathcal B,q)\to(\Omgb\mathcal A,p)$ be homotopic maps. Then $\pi^1\phi_0=\pi^1\phi_1:\pi^1(\Omgb\mathcal A,p)\to \pi^1(\Omgb\mathcal B,q)$.
\end{theorem}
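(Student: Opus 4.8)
The plan is to reduce to the graded commutative case and then show that the homotopy $H$ induces, via its composition with the two fibre functors $v_0, v_1 \colon \C_\flat(\Omega[0,1]) \to \Vec$, a natural isomorphism between the functors underlying $\pi^1\phi_0$ and $\pi^1\phi_1$, exactly in the style of the base point invariance argument (Proposition \ref{prop-basepointinvariance}). Since an automorphism of a tensor functor can be transported along a tensor-natural isomorphism of fibre functors, this yields $\pi^1\phi_0 = \pi^1\phi_1$ as homomorphisms of group schemes.

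First I would invoke Theorem \ref{thm-reduction_to_commutative_case} to assume that $\Omega\A$ and $\Omega\B$ are graded commutative, and that $H \colon \Omega\B \to \Omega\A \tensor \Omega[0,1]$ restricts to a map of the centres compatible with the base points $p,q$ as in Definition \ref{def-homotopy}. Given an object $(\E,\nabla)$ of $\C_\flat(\Omega\A)$, I would form $\E \tensor_\A (\A \tensor C^\infty[0,1]) = \E \tensor C^\infty[0,1]$ with the connection $\nabla \tensor d$ over $\Omega\A \tensor \Omega[0,1]$; pulling this back along $H$ gives an fgp $\B$-module $H^*\E$ with a flat connection. Decomposing $\Omega(\A \tensor [0,1])^1 = \Omega^1\A \oplus (\A \tensor \Omega^1[0,1])$, the connection $\nabla \tensor d$ splits as a flat connection $\nabla$ over $\Omega\A$ plus an $\A$-linear connection $\nabla_2$ over $\Omega[0,1]$; by Lemma \ref{lem-modules_Atimes01}, $\ker(\nabla_2)$ is an fgp $\A$-module and multiplication gives a natural isomorphism $\ker(\nabla_2) \tensor C^\infty[0,1] \xrightarrow{\sim} \E \tensor C^\infty[0,1]$. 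Consequently, for each $t \in \{0,1\}$ the localisation-at-$t$ map $\ker(\nabla_2) \to (\E \tensor C^\infty[0,1])_t$ is an isomorphism, so the two ``evaluate-at-$t$-then-pull-back'' functors $v_t \circ H^* \colon \C_\flat(\Omega\A) \to \Vec$ are each naturally isomorphic (via this intermediate functor $(\E,\nabla) \mapsto \ker(\nabla_2)$) to one another.

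Next I would check that $\ev_t \circ H = \phi_t$ identifies $v_t \circ H^*$ with the fibre functor $\omega_q \circ (\pi^1\phi_t \text{'s underlying functor})$, i.e. with $\mathcal E \mapsto (\phi_t^*\mathcal E)_q$; this is just unwinding definitions of the pullback of connections and of localisation, using the base-point compatibility diagram in Definition \ref{def-homotopy} to see that the character on $Z_g(\A \tensor [0,1])$ at which one localises is $p \tensor \ev_t$, which by that diagram pulls back along $H$ to $q$. Combining this with the previous paragraph gives a natural isomorphism between the fibre functors computing $\pi^1\phi_0$ and $\pi^1\phi_1$; moreover all the isomorphisms involved are compatible with the tensor structure, since tensor products, the decomposition of $\Omega^1$, kernels of connections, and localisation are all monoidal-compatible. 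Therefore the two induced maps on automorphism group schemes coincide.

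The main obstacle is the bookkeeping in the second and third paragraphs: carefully tracking which character the fibre functor localises at once one works over the tensor-product dga $\Omega\A \tensor \Omega[0,1]$, and verifying that the natural isomorphism produced is a tensor-natural isomorphism (so that it actually transports group-scheme homomorphisms and not merely abstract functors). Everything else — the path-ordered-exponential trivialisation and the fgp-ness of $\ker(\nabla_2)$ — is already packaged in Lemmas \ref{lem-nabla-isomorphic-d} and \ref{lem-modules_Atimes01}, so the argument is essentially a diagram-chase assembling those pieces, directly parallel to Proposition \ref{prop-basepointinvariance}.
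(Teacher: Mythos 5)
There is a genuine gap in the final step. You construct a (tensor‐)natural isomorphism between the fibre functors underlying $\pi^1\phi_0$ and $\pi^1\phi_1$ and conclude from it that the two homomorphisms of group schemes are equal, asserting that ``an automorphism of a tensor functor can be transported along a tensor-natural isomorphism of fibre functors.'' But this transport only yields that $\pi^1\phi_0$ and $\pi^1\phi_1$ are \emph{conjugate} by the element of $\pi^1(\Omgb\mathcal B,q)$ that your natural isomorphism represents after localisation: writing $\mu\colon (\phi_0)_*\to(\phi_1)_*$ for the natural isomorphism of functors $\C_\flat(\Omgb\mathcal B)\to\C_\flat(\Omgb\mathcal A)$, naturality of $g\in\Aut^\otimes(p_*)$ gives $g_{(\phi_1)_*\mathcal E}=p_*(\mu_{\mathcal E})\circ g_{(\phi_0)_*\mathcal E}\circ p_*(\mu_{\mathcal E})^{-1}$, so $\pi^1\phi_1(g)=p_*(\mu)\cdot\pi^1\phi_0(g)\cdot p_*(\mu)^{-1}$. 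To get equality one must prove that $p_*\mu$ is the identity. This is precisely where the base-point compatibility square in Definition \ref{def-homotopy} enters in a nontrivial way, and it is the part of the paper's argument involving the functor $F\colon\Vec\to\C_\flat(\Omgb[0,1])$ and the computation showing that $F^*\eta'=\id$, hence $p_*H^*\eta=q^*F^*\eta'=\id$. Your proposal uses the base-point diagram only to identify which character one localises at; it does not use it to pin down the natural isomorphism, so the ``equal vs.\ conjugate'' distinction is not established.

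A secondary issue is that the functors in your second and third paragraphs go the wrong way: you start with $(\mathcal E,\nabla)\in\C_\flat(\Omega\mathcal A)$ and speak of pulling back along $H$ to get a $\mathcal B$-module, but $H\colon\Omgb\mathcal B\to\Omgb\mathcal A\tensor\Omgb[0,1]$ and the induced base-change functor (as set up in Proposition \ref{lem-functoriality_graded_commutative}) goes $\C_\flat(\Omgb\mathcal B)\to\C_\flat(\Omgb\mathcal A\tensor\Omgb[0,1])$, after which the kernel functor and the evaluations land in $\C_\flat(\Omgb\mathcal A)$. Getting this direction right matters because it is what produces $\mu$ as a natural isomorphism between $(\phi_0)_*$ and $(\phi_1)_*$ at the level of $\C_\flat(\Omgb\mathcal A)$ (not merely at the level of $\Vec$), which is the object you must then test under $p_*$.
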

\begin{proof}
Let $H:\Omgb\mathcal B\to \Omgb\mathcal A\tensor\Omgb[0,1]$ be as in Definition \ref{def-homotopy}. We may assume without loss of generality that $\Omgb\mathcal A$ and $\Omgb\mathcal B$ are graded commutative. Let $\mathcal E$ be an fgp module over $\mathcal A\tensor C^\infty[0,1]$ with a flat connection $\nabla$. We can write $\nabla=\nabla_1\oplus\nabla_2:\mathcal E\to \mathcal E\tensor\Omega^1\mathcal A\oplus \mathcal E\tensor\Omega^1[0,1]$ where $\nabla_1:\mathcal E\to \mathcal E\tensor\Omega^1 \mathcal A$ is a connection over $\Omgb\mathcal A$ that is $C^\infty[0,1]$ linear while $\nabla_2:\mathcal E\to\mathcal E\tensor\Omega^1[0,1]$ is a connection over $\Omgb[0,1]$ that is $\mathcal A$-linear. By Lemma \ref{lem-modules_Atimes01} we know that $\ker(\nabla_2)$ is an fgp $\Omgb\mathcal A$-module, and it is easy to check that $\nabla_{1\mid\ker(\nabla_2)}$ is a connection on this module. So we can consider the functor
\[n:\mathcal C_{\flat}(\Omgb\mathcal A\tensor\Omgb[0,1])\to \mathcal C_{\flat}(\Omgb\mathcal A)\]
given by $n(\mathcal E,\nabla)=(\ker(\nabla_2),\nabla_{1\mid\ker(\nabla_2)})$. Consider also the functor
\[v_0:\mathcal C_{\flat}(\Omgb\mathcal A\tensor\Omgb[0,1])\to \mathcal C_{\flat}(\Omgb\mathcal A)\]
given by $v_0(\mathcal E,\nabla)=(\mathcal E_{\ev_0},\nabla_ {\ev_0})$ where $\ev_0:\Omgb\mathcal A\tensor\Omgb[0,1]\to\Omgb\mathcal A$ denotes evaluation at 0. By Lemma \ref{lem-modules_Atimes01} we have a natural isomorphism $\eta:n\to v_0$, sending $\ker(\nabla_2)$ to $\mathcal E_{\ev_0}$ by the composition $\ker(\nabla_2)\to \mathcal E\to \mathcal E_{\ev_0}$.

Taking $\Omgb\mathcal A=\mathbb C$ in the above we get similar maps $n',v_0':\mathcal C_{\flat}(\Omgb[0,1])\to \Vec$, and a natural isomorphism $\eta':n'\to v_0'$. Now consider the diagram
\[\begin{tikzcd}[column sep=huge,row sep=huge]
\mathcal C_{\flat}(\Omgb\mathcal A\tensor\Omgb[0,1]) \arrow[dd,"{p\tensor\Omgb[0,1]}"] \arrow[r,bend left,"n",""{name=U,below}] \arrow[r,bend right,"v_0" below,""{name=V}] & \mathcal C_{\flat}(\Omgb\mathcal A) \arrow[dd,"p"]\\
\\
\mathcal C_{\flat}(\Omgb[0,1]) \arrow[r,bend left,"n'",""{name=U',below}] \arrow[r,bend right,"v_0'" below,""{name=V'}] & \Vec
\arrow[Rightarrow,from=U,to=V,"\eta"]
\arrow[Rightarrow,from=U',to=V',"\eta'"]
\end{tikzcd}\]
Here $p\tensor\Omgb[0,1]$ denotes by abuse of notation the functor sending $\mathcal E$ to $\mathcal E\tensor_{p\tensor\Omgb[0,1]}\Omgb[0,1]$, and similar for $p$. It is easy to see that $p\circ n = n'\circ (p\tensor\Omgb[0,1])$ and $p\circ v_0=v_0'\circ(p\tensor\Omgb[0,1])$. Moreover, we have $(p\tensor\Omgb[0,1])^*\eta'=p_*\eta$.
Now consider the diagram
\[\begin{tikzcd}
\mathcal C_{\flat}(\Omgb\mathcal B) \arrow[r,"H"] \arrow[d,"q"] & \mathcal C_{\flat}(\Omgb\mathcal A\tensor\Omgb[0,1]) \arrow[d,"{p\tensor\Omgb[0,1]}"]\\
\Vec \arrow[r,"F"]& \mathcal C_{\flat}(\Omgb[0,1]).
\end{tikzcd}\]
Here we used similar abuse of notation as in the diagram above, and $F$ is the functor sending a vector space $W$ to $(W\tensor C^\infty[0,1],d)$. This diagram commutes because $H$ is a homotopy. So we have $p_*H^*\eta=H^*(p\tensor\Omgb[0,1])^*\eta'=q^*F^*\eta'$. Now $F^*\eta'$ is a natural isomorphism between $n'\circ F=\id:\Vec\to\Vec$ and $v_0'\circ F=\id:\Vec\to\Vec$, and it is in fact the identity. So $H^*\eta$ is a natural isomorphism from $n\circ H$ to $\phi_0$, and $p_*H^*\eta:q\to q$ is the identity. Of course we get a similar natural isomorphism for evaluation at 1 instead of evaluation at 0, and composing these we get a natural isomorphism $\mu:\phi_0\to\phi_1$ satisfying $p_*\mu=\id:q\to q$.  Then the maps $\phi_0,\phi_1$ induce the same map $\pi^1\phi_0=\pi^1\phi_1=\pi^1(\Omgb\mathcal A,p)\to \pi^1(\Omgb\mathcal B,q)$.
\end{proof}

It follows directly that $\pi^1$ is an invariant for homotopy equivalence:
\begin{corollary}
Let $\Omgb\mathcal A$ and $\Omgb\mathcal B$ be $*$-dga's whose graded centres satisfy property Q and let $p\in\widehat {Z_g(\A)},q\in\widehat {Z_g(\B)}$ be base points. Let $\phi:(\Omgb\mathcal A,p)\to(\Omgb\mathcal B,q)$ and $\psi:(\Omgb\mathcal B,q)\to(\Omgb\mathcal A,p)$ be morphisms such that $\phi\circ\psi$ and $\psi\circ\phi$ are homotopic to the identity on $\Omgb\mathcal B$ and $\Omgb\mathcal A$ respectively. Then $\pi^1(\Omgb\mathcal A,p)$ is isomorphic to $\pi^1(\Omgb\mathcal B,q)$.
\end{corollary}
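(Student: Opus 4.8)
The plan is to deduce this formally from functoriality (Proposition~\ref{lem-functoriality_graded_commutative}) together with the homotopy invariance just established (Theorem~\ref{thm-homotopy_invariance}). First I would apply Proposition~\ref{lem-functoriality_graded_commutative} to $\phi$ and to $\psi$ separately. Since by hypothesis both maps are base-point-preserving (that is, $\phi^*(q)=p$ and $\psi^*(p)=q$), this produces morphisms of affine group schemes
\[
\pi^1\phi\colon \pi^1(\Omgb\mathcal B,q)\to \pi^1(\Omgb\mathcal A,p),\qquad
\pi^1\psi\colon \pi^1(\Omgb\mathcal A,p)\to \pi^1(\Omgb\mathcal B,q).
\]

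Next I would record that $\pi^1$ is a contravariant functor on base-point-preserving maps of $*$-dga's whose graded centres satisfy property Q: one has $\pi^1(\mathrm{id})=\mathrm{id}$ and $\pi^1(\phi\circ\psi)=\pi^1\psi\circ\pi^1\phi$. This is not stated explicitly earlier, but it follows at once from the construction in the proof of Proposition~\ref{lem-functoriality_graded_commutative}: after reducing to the graded commutative case a map $\phi$ induces the exact tensor functor $\mathcal E\mapsto\mathcal E\tensor_{\mathcal A}\mathcal B$ between the categories of flat connections, such induced functors are compatible with composition up to canonical natural isomorphism and commute with the fibre functors at the matching base points, so that the pullback of automorphisms of the fibre functor is contravariantly functorial.

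Finally I would invoke homotopy invariance: since $\phi\circ\psi\colon(\Omgb\mathcal B,q)\to(\Omgb\mathcal B,q)$ is homotopic to $\mathrm{id}_{\Omgb\mathcal B}$ in the sense of Definition~\ref{def-homotopy}, Theorem~\ref{thm-homotopy_invariance} gives $\pi^1(\phi\circ\psi)=\pi^1(\mathrm{id}_{\Omgb\mathcal B})=\mathrm{id}$, hence $\pi^1\psi\circ\pi^1\phi=\mathrm{id}_{\pi^1(\Omgb\mathcal B,q)}$; symmetrically, $\psi\circ\phi\simeq\mathrm{id}_{\Omgb\mathcal A}$ yields $\pi^1\phi\circ\pi^1\psi=\mathrm{id}_{\pi^1(\Omgb\mathcal A,p)}$. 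Therefore $\pi^1\phi$ and $\pi^1\psi$ are mutually inverse isomorphisms of affine group schemes, giving $\pi^1(\Omgb\mathcal A,p)\cong\pi^1(\Omgb\mathcal B,q)$.

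The only step with any real content is the compositionality of $\pi^1$ in the second paragraph, and I expect this to be the main (though routine) obstacle, since Proposition~\ref{lem-functoriality_graded_commutative} as phrased concerns only a single morphism; once that is in place the statement is a purely formal consequence of Theorem~\ref{thm-homotopy_invariance}.
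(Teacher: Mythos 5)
Your proof is correct and matches the paper's (very terse) argument, which simply says that by Theorem~\ref{thm-homotopy_invariance} the maps $\pi^1\phi$ and $\pi^1\psi$ are mutually inverse. You are right to flag that the compositionality $\pi^1(\phi\circ\psi)=\pi^1\psi\circ\pi^1\phi$ and $\pi^1(\mathrm{id})=\mathrm{id}$ are used implicitly and not stated in Proposition~\ref{lem-functoriality_graded_commutative}; your observation that these follow from the extension-of-scalars construction $\mathcal E\mapsto\mathcal E\tensor_{\mathcal A}\mathcal B$ being compatible with composition and with the fibre functors is exactly the routine verification the paper suppresses.
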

\begin{proof}
It follows from the theorem that $\pi^1(\phi)$ and $\pi^1(\psi)$ are inverse to each other.
\end{proof}

\subsection{Invariance under Morita equivalence}
We now address the question whether $\pi^1$ is invariant under Morita equivalence of the underlying dga's. Since we work with differential graded algebras as well as with $C^*$-algebras, for both of which there exist notions of Morita equivalence, let us make more precise what we mean.

Let $(R,d)$ be a differential graded {\em ring}. We denote by $\Mod^\dg_{R}$ the category of all differential graded right $R$-modules $(M,d)$ and with morphisms {\em all} graded module morphisms, not only those of degree 0. We write $d$ instead of $\nabla$ here to distinguish these differentials from the flat bimodule connections considered before. They satisfy the right Leibniz rule ({\em  cf.} Equation \eqref{eq:right-leibniz}):
$$
d(mr) = d(m) r + (-1)^{|m|} m dr ; \qquad (m \in M, r \in R).
$$
It then follows that the morphisms $\Hom((M,d), (N,d))$ become differential graded modules (over the differential graded ring $\End((M,d))$), with
\begin{equation}
  \label{eq:induced-diff}
df (m) = d (f(m) ) - (-1)^{|f|} f(dm); \qquad (m \in M).
\end{equation}
Thus, the category $\Mod^\dg_{R}$ is a so-called {\em differential graded category}, or {\em dg-category}.

\begin{definition}
Two differential graded rings $R$ and $S$ are called {\em dg-Morita equivalent} if the categories $\Mod^\dg_{R}$ and $\Mod^\dg_{S}$ are equivalent. 
\end{definition}

It is a classical result in Morita theory that Morita equivalent rings have isomorphic centers (see for instance  \cite[Remark 18.43]{Lam}). We prove an analogue for dg-Morita equivalent differential graded rings. Recall that the graded center of an additive category is given by all graded natural transformations $\eta$ from the identity functor to itself ({\em cf.} \cite[Section 4]{NS17} and references therein), {\em i.e.} for all dg-modules $M,N$ and $f  \in \Hom((M,d), (N,d))$ there is a (graded) commuting diagram
\[\begin{tikzcd}
M \arrow[r,"f"] \arrow[d,"\eta_M"] & N \arrow[d,"\eta_N"]\\
M \arrow[r,"f"] & N
\end{tikzcd}\]
in the sense that $f \circ \eta_N = (-1)^{|\eta||f|} \eta_M \circ f$.

\begin{proposition}
The center of the category $\Mod^\dg_{R}$ is a (graded commutative) differential graded ring which is isomorphic to the graded center $Z_g(R)$ of $R$. 
  \end{proposition}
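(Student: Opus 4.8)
The statement asserts an isomorphism of differential graded rings $Z_g(\Mod^\dg_R) \cong Z_g(R)$. The strategy is the classical one for Morita theory adapted to the graded setting: build maps in both directions and check they are mutually inverse, then verify compatibility with the multiplication and the differential. First I would construct the map $Z_g(R) \to Z_g(\Mod^\dg_R)$: given a homogeneous element $z \in Z_g(R)$ of degree $|z|$, define for each dg-module $(M,d)$ the map $\eta_M : M \to M$ by $\eta_M(m) = (-1)^{|z||m|} m z$ (right multiplication by $z$, with a Koszul sign). Because $z$ is in the graded center, right multiplication by $z$ equals (up to sign) left multiplication by $z$, and this is what makes $\eta_M$ a module map commuting with the differential in the graded sense and natural in $M$; one checks the graded naturality square $f \circ \eta_N = (-1)^{|\eta||f|}\eta_M \circ f$ directly from right $R$-linearity of $f$, and the differential compatibility $d \circ \eta_M = (-1)^{|z|} \eta_M \circ d$ from $dz = 0$ — wait, more carefully, one uses the Leibniz rule \eqref{eq:induced-diff} together with $dz=0$, which holds because $Z_g(R)$ is a dg-subring (the graded-center analogue of Lemma \ref{lem-graded_centre}(i) for rings, proved by the same commutator identities).

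**The inverse map.** In the other direction, given a graded natural transformation $\eta$ from the identity functor to itself, evaluate it on the free rank-one module $M = R$ (as a right $R$-module over itself); set $z := \eta_R(1) \in R$. Naturality of $\eta$ applied to right-multiplication maps $\rho_r : R \to R$, $\rho_r(x) = rx$ — wait, these are left multiplications, which are the right-module endomorphisms of $R$ — forces $z$ to graded-commute with all of $R$, hence $z \in Z_g(R)$; and naturality applied to the (degree-$|z|$-shifted) module maps $R \to M$ picking out an arbitrary element recovers $\eta_M$ as multiplication by $z$ on every module, since every $M$ receives enough maps from free modules (here finitely generated projectivity or at least the existence of a generating set of module maps out of free modules is used, but $R$ itself generates). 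The composite $Z_g(R) \to Z_g(\Mod^\dg_R) \to Z_g(R)$ is visibly the identity; the other composite is the identity by the argument just given that $\eta$ is determined by $\eta_R(1)$. Finally, both maps are ring homomorphisms (composition of natural transformations corresponds to multiplication in $R$, with the signs matching the Koszul convention built into $\eta_M$) and intertwine the differentials, using \eqref{eq:induced-diff} on $M = R$ to see that the differential of $\eta$ as an element of the dg-center corresponds to $dz$.

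**Main obstacle.** The routine parts are the sign bookkeeping (the Koszul signs in the naturality square and in the statement $f\circ\eta_N = (-1)^{|\eta||f|}\eta_M\circ f$ must be threaded consistently through every verification) and checking that $Z_g(R)$ is a dg-subring so that $dz$ makes sense and $dz = 0$ when $\eta$ is a cycle. The genuinely substantive point — and the place where the dg-category structure rather than just the underlying additive category is essential — is verifying that the bijection $\eta \leftrightarrow z$ respects the \emph{differential}: one must show that the induced differential on the dg-center of $\Mod^\dg_R$ (coming from \eqref{eq:induced-diff} applied to natural transformations viewed as endomorphisms of the identity functor) corresponds under the bijection exactly to $z \mapsto dz$ on $Z_g(R)$. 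This is where I expect to spend the most care, since it requires unwinding the definition of the differential on $\Hom((M,d),(M,d))$ for $M = R$ and matching it with the differential $d$ on $R$ itself; the key computation is $(d\eta)_R(1) = d(\eta_R(1)) - (-1)^{|\eta|}\eta_R(d1) = d(z)$, together with the check that the natural transformation $d\eta$ is again multiplication by the element $dz$, which follows once one knows $d\eta$ is natural and is therefore determined by its value on $1 \in R$.
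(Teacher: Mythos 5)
Your proof follows essentially the same route as the paper's: define $\rho(z)=\eta^{(z)}$ by signed right multiplication $\eta^{(z)}_M(m)=(-1)^{|m||z|}mz$, and establish surjectivity by evaluating an arbitrary $\eta$ at $1\in R$, using graded naturality against the left multiplications $R\to R$ to place $z:=\eta_R(1)$ in $Z_g(R)$ and against the degree-$|m|$ maps $R\to M$, $r\mapsto mr$, to identify $\eta_M$ with $\eta^{(z)}_M$ on every module. One slip worth flagging in your first paragraph: $dz$ need \emph{not} vanish for $z\in Z_g(R)$ (being a dg-subring means closed under $d$, not annihilated by it), and $\eta^{(z)}_M$ is a morphism in $\Mod^{\dg}_R$ simply because morphisms there are \emph{all} graded module maps, not only chain maps — the correct differential compatibility, which you do compute correctly in your final paragraph, is $d\eta^{(z)}=\eta^{(dz)}$, exhibiting $\rho$ as a map of dg rings.
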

\proof
Since $\eta_M \in \Hom((M,d), (M,d))$ we can define $(d\eta)_M = d( \eta_M)$ using Equation \eqref{eq:induced-diff}. This turns the graded center $C(\Mod^\dg_{R})$ of $\Mod^\dg_{R}$ into a dg ring. Let us then show that $C(\Mod^\dg_{R})$ is isomorphic to $Z_g(R)$. 

As in \cite[Remark 18.43]{Lam} we define a map
\begin{align*}
\rho:  Z_g(R )& \to C(\Mod^\dg_{R})\\
  r &\mapsto \eta^{(r)}
\end{align*}
where $\eta^{(r)}$ is given by right multiplication by $r$, that is to say, $\eta^{(r)}_M(m) = (-1)^{|m||r|}m r$ for all $m \in M$. The map $\rho$ is a map of dg rings since $|\eta^{(r)}| = |r|$ and 
\begin{align*}
  ( d \eta^{(r)})_M (m) &= d( \eta^{(r)}_M (m)) - (-1)^{|r|} \eta^{(r)}_M (d m) \\
  &= (-1)^{|m||r| }d(m r) -(-1)^{(|m|+2)|r|}  (dm) r \\
  &= (-1)^{|m|(|r|+1)} m (d r) = \eta^{(d r)}_M (m).
  \end{align*}
It is also clearly injective. To prove that $\rho$ is surjective take any $\eta \in C(\Mod^\dg_{R})$ and consider first $\eta_{R} : R \to R$. This map satisfies $\eta_{R}(s) = \eta_{R}(1) s =  r s$ where we have set $r := \eta_{R}(1) \in R$. On the other hand, multiplication on the left on $R$ by an element in $R$ is a morphism in $\Mod^\dg_{R}$ so that by graded naturality we also have $\eta_{R}(s) = (-1)^{|s||\eta|} s \eta_{R}(1) = (-1)^{|r||s|}s r$. Hence $r \in Z_g(R)$ and $\eta_{R} =\eta_{R}^{(r)}$. For an arbitrary module $M$ in $\Mod^\dg_{R}$ consider the following (graded) commuting diagram:
\[\begin{tikzcd}
R \arrow[r,"f"] \arrow[d,"\eta_{R}"] & M \arrow[d,"\eta_M"]\\
R \arrow[r,"f"] & M
\end{tikzcd}\]
where for $m \in M$ we have defined $f(r) = m r$, a morphism of graded right $R$-modules of degree $|f| = |m|$. Then
$$
\eta_M(m) = \eta_{M}(f(1)) = (-1)^{|\eta| |f|}f(\eta_{R}(1)) = (-1)^{|r| |m|}f(r) =  (-1)^{|m||r|} m r
$$
so that we may conclude that $\eta_M = \eta_M^{(r)}$ for all dg-modules $M$ and hence that $\rho$ is surjective.
\endproof

\begin{theorem}
Suppose that $\Omega \A$ and $\Omega \B$ are two dg-Morita equivalent dga's whose graded centers satisfy property Q. 
Then
  \begin{enumerate}
\item There is an isomorphism of dga's $\phi: Z_g(\Omega\A) \to Z_g(\Omega\B)$,
\item The induced map $\pi^1\phi: \pi^1 (\Omega \B,q) \to \pi^1 (\Omega \A,p)$ is an isomorphism where $p= \phi^* (q) \in\widehat{Z_g(\Omega\A)}$ for $q \in \widehat{Z_g(\Omega\B)}$. 
  \end{enumerate}

  \end{theorem}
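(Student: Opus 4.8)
The plan is to deduce both parts from the Morita-theoretic identification of centers together with the functoriality and homotopy machinery already developed. For part (1), the hypothesis is that $\Mod^\dg_{\Omega\A}$ and $\Mod^\dg_{\Omega\B}$ are equivalent as dg-categories. By the preceding Proposition, the graded center of each of these categories is canonically isomorphic, as a differential graded ring, to $Z_g(\Omega\A)$ respectively $Z_g(\Omega\B)$. Since an equivalence of categories induces an isomorphism on the (graded) centers of the categories, and since the Proposition shows this center is computed by $Z_g(-)$ compatibly with the differential, we obtain a dga isomorphism $\phi\colon Z_g(\Omega\A)\xrightarrow{\sim} Z_g(\Omega\B)$. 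One should check that the equivalence respects the dg structure on Hom-spaces so that the induced map on centers is a map of \emph{dg} rings and not merely of graded rings; this is where the hypothesis that we have a dg-equivalence (not just an equivalence of underlying categories) is used, and it is the only subtle point in part (1).

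For part (2), the idea is that an isomorphism of dga's should induce an isomorphism of fundamental groups, essentially by functoriality (Proposition \ref{lem-functoriality_graded_commutative}) applied in both directions. First I would invoke the equivalences $\C(\Omega\A)\simeq\C(Z_g(\Omega\A))$ and $\C(\Omega\B)\simeq\C(Z_g(\Omega\B))$ from Theorem \ref{thm-reduction_to_commutative_case}, so that the whole question reduces to the graded commutative dga's $Z_g(\Omega\A)$ and $Z_g(\Omega\B)$, which are isomorphic via $\phi$ by part (1). Since $\phi$ is an isomorphism of $*$-dga's sending center to center (trivially, as these dga's are already central), Proposition \ref{lem-functoriality_graded_commutative} applied to $\phi$ and to $\phi^{-1}$ gives maps $\pi^1\phi\colon\pi^1(\Omega\B,q)\to\pi^1(\Omega\A,p)$ and $\pi^1(\phi^{-1})\colon\pi^1(\Omega\A,p)\to\pi^1(\Omega\B,q)$. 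It remains to see that these are mutually inverse, which follows from the functoriality of the assignment $\psi\mapsto\pi^1\psi$ (contravariant) together with $\pi^1(\id)=\id$: the latter is immediate since the pullback functor along the identity is the identity functor on $\C_\flat$, hence induces the identity on automorphisms of the fibre functor, and $\phi\circ\phi^{-1}=\id$, $\phi^{-1}\circ\phi=\id$ give $\pi^1\phi\circ\pi^1(\phi^{-1})=\id$ and $\pi^1(\phi^{-1})\circ\pi^1\phi=\id$.

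The one bookkeeping point to be careful about is the base points: $p$ is \emph{defined} as $\phi^*(q)$, so the compatibility needed to apply Proposition \ref{lem-functoriality_graded_commutative} to $\phi$ holds by construction, and since $\phi$ is an isomorphism we automatically have $(\phi^{-1})^*(p)=q$, so the proposition applies in the reverse direction as well with matching base points. Thus no extra hypothesis on base-point invariance is needed here, in contrast to Proposition \ref{prop-basepointinvariance}.

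I expect the main obstacle to be part (1): one must argue carefully that a dg-Morita equivalence induces an isomorphism of the \emph{differential} graded centers, i.e. that the equivalence is compatible with the canonical differential on $C(\Mod^\dg_{-})$ coming from Equation \eqref{eq:induced-diff}. Once the dga isomorphism $\phi$ is in hand, part (2) is essentially formal, being a routine application of the already-established reduction to the graded commutative case and the functoriality of $\pi^1$.
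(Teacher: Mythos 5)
Your proposal follows the same route as the paper: identify $Z_g(\Omega\A)$ with the graded center $C(\Mod^\dg_{\Omega\A})$ via the preceding proposition, let the dg-Morita equivalence carry this to an isomorphism of centers, then reduce to the graded commutative case via Theorem \ref{thm-reduction_to_commutative_case} and apply functoriality of $\pi^1$ (you spell out the two-sided inverse argument and the base-point bookkeeping, which the paper leaves implicit). Your caveat in part (1) is well taken: the paper's definition says only that $\Mod^\dg_{\Omega\A}$ and $\Mod^\dg_{\Omega\B}$ are ``equivalent,'' and for the induced isomorphism of centers to respect the differential one should read this as a \emph{dg}-equivalence, a point the paper glosses over.
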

\proof
From the proof of the previous proposition we find that $Z_g(\Omega\A) \cong C(\Mod_{\Omega\A}^{\dg})$ and also $Z_g(\Mod_{\Omega\B}^\dg) \cong C(\Mod_{\Omega\B}^{\dg})$. Since the categories $\Mod_{\Omega\B}^\dg$ and $\Mod_{\Omega\B}^{\dg}$ are equivalent, their graded centers are isomorphic, which proves a). Functoriality of $\pi^1$ as proved in Proposition \ref{lem-functoriality_graded_commutative} in combination with the fact that $\pi^1 (\Omega \A,p) \cong \pi^1(Z_g(\Omega \A),p)$ and $\pi^1 (\Omega\B,q) \cong \pi^1(Z_g(\Omega \B),q)$ ({\em cf.} Theorem \ref{thm-reduction_to_commutative_case})  then proves b). 
\endproof

\begin{remark}
  It is an interesting question to see whether $\pi^1$ is invariant under {\em derived} Morita equivalence as well. For instance, in \cite[Prop. 9.2]{Ric89} or \cite[Prop. 6.3.2]{KZ98} it is shown that the center of a ring is invariant under derived Morita equivalence. For the generalization to differential graded rings we refer to the notes \cite{Kel98,Sch03}, see also \cite[Remark 5.6]{NS17}. 
  \end{remark}

\section{Examples: toric noncommutative manifolds}
\label{sect:ex}
\subsection{Noncommutative tori}
In this section we will consider the noncommutative torus, also called the rotation algebra. Let $\A_\theta$ be the rotation algebra, as studied by Rieffel \cite{Rieffel-rotation} and Connes \cite{Connes-torus}, and described in \cite[Ch.12]{GraciaBondia}. For any real number $\theta$ we define it to be the following $*$-algebra
$$
\A_\theta := \big \{ \sum_{m,n}  a_{mn} u^m v^n : (a_{mn}) \in \S(\mathbb Z^2) \big\}
$$
where $u,v$ are unitaries that satisfy $uv=\lambda vu$ where $\lambda=e^{2\pi i\theta}$. Note that the algebra $\A_\theta$ has a natural $\mathbb Z^2$-grading where $u^mv^n$ has degree $(m,n)$. In fact, this degree is related to the action $\alpha$ of a 2-dimensional torus $\mathbb T^2$ by automorphisms on $ \A_\theta$ given by $\alpha_t(u^mv^n) = e^{i m t_1 + n t_2}u^mv^n$. 

Let us now introduce the dga for the noncommutative torus, given by noncommutative differential forms $\Omega \A_\theta$. The elements of $\Omega^1( \A_\theta)$ are of the form $adu+bdv$ with $a,b\in \A_\theta$ and they satisfy $udu=du\cdot u, udv=\lambda dv\cdot u,vdu=\overline\lambda du\cdot v,vdv=dv\cdot v$ and $du\cdot dv=-\lambda dv\cdot du$. The elements of $\Omega^2 \A_\theta$ are of the form $adudv$ with $a\in \A_\theta$ (see \cite[Sect. 12.2]{GraciaBondia}). The action $\alpha$ extends to the dga as a graded automorphism by demanding that it commutes with the differential.

Note that an integer value of $\theta$ gives back the algebra $C^\infty(\mathbb T^2)$ of the usual torus. The noncommutative torus looks rather differently for $\theta$ irrational and $\theta$ rational. In both cases we will compute the graded centre of $\Omega \A_\theta$ and from there the fundamental group.

\begin{proposition}
  \label{ex-irrational_torus}
  Let $\theta$ be irrational. Then the center of $\Omega \A_\theta$ is trivial and the fundamental group of $\Omega \A_\theta$ is isomorphic to (the algebraic hull of) $\mathbb R^2$.
\end{proposition}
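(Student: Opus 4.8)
The plan is to reduce everything to structural results already established: compute the graded centre $Z_g(\Omega\A_\theta)$ explicitly, show it is isomorphic to the exterior algebra $\exterior\mathbb{C}^2$ equipped with zero differential, and then invoke Theorem \ref{thm-reduction_to_commutative_case} together with Proposition \ref{prop-product_with_wedge_V}.

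The first and main step is the computation of $Z_g(\Omega\A_\theta)$. A homogeneous element lies in the graded centre as soon as it graded-commutes with the generators $u,v,du,dv$: this follows from the derivation identity $[\alpha\beta,\nu]=\alpha[\beta,\nu]+(-1)^{|\beta|\cdot|\nu|}[\alpha,\nu]\beta$, from continuity with respect to the Schwartz topology on $\A_\theta$, and from the fact that commuting with $u$ and $v$ forces commuting with $u^{-1}$ and $v^{-1}$. I would then argue degree by degree, using the relations $u\,du=du\,u$, $u\,dv=\lambda\,dv\,u$, $v\,du=\bar\lambda\,du\,v$, $v\,dv=dv\,v$, $du\,dv=-\lambda\,dv\,du$, together with the fact that, $\theta$ being irrational, $\lambda^k=1$ holds only for $k=0$. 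In degree $0$, conjugation by $u$ multiplies $u^mv^n$ by $\lambda^n$ and conjugation by $v$ multiplies it by $\lambda^{-m}$, whence $Z_g(\A_\theta)=\mathbb{C}$; this is the asserted triviality of the centre of the algebra. In degree $1$, writing an element as $a\,du+b\,dv$ with $a,b\in\A_\theta$, graded commutation with $u$ and $v$ is equivalent to $au=ua$, $va=\lambda av$, $bv=vb$, $bu=\lambda ub$, and the only solutions are $a\in\mathbb{C}u^{-1}$, $b\in\mathbb{C}v^{-1}$; conversely $e_1:=u^{-1}du$ and $e_2:=v^{-1}dv$ are checked to graded-commute with $du$ and $dv$ as well, so the degree-$1$ part of the centre is exactly $\mathbb{C}e_1\oplus\mathbb{C}e_2$. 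In degree $2$, an element $a\,du\,dv$ is central iff $a\in\mathbb{C}u^{-1}v^{-1}$, while $\Omega^k\A_\theta=0$ for $k\geq 3$; so $Z_g(\Omega\A_\theta)$ has dimensions $1,2,1$ in degrees $0,1,2$.

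It then remains to identify the dga structure. One computes $e_1^2=e_2^2=0$, $e_1e_2=-e_2e_1$, and that $e_1e_2$ is a nonzero scalar multiple of $u^{-1}v^{-1}du\,dv$, so it spans the degree-$2$ part; hence $Z_g(\Omega\A_\theta)\cong\exterior\mathbb{C}^2$ as graded algebras. From $d(u^{-1})=-u^{-1}(du)u^{-1}$ one gets $de_1=de_2=0$, and the differential vanishes automatically in top degree, so this is an isomorphism of dga's onto $\exterior\mathbb{C}^2$ with $d=0$. Since the degree-$0$ part of $Z_g(\Omega\A_\theta)$ is $\mathbb{C}$, it satisfies property Q, so $\pi^1(\Omega\A_\theta)$ is well defined in the sense of Definition \ref{def-fundamental_group}. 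Now Theorem \ref{thm-reduction_to_commutative_case} yields an equivalence $\C_{\flat}(\Omega\A_\theta)\simeq\C_{\flat}(Z_g(\Omega\A_\theta))=\C_{\flat}(\exterior\mathbb{C}^2)$ compatible with tensor products and fibre functors, hence $\pi^1(\Omega\A_\theta)\cong\pi^1(\exterior\mathbb{C}^2)$, and Proposition \ref{prop-product_with_wedge_V} identifies the latter with the algebraic hull of $\mathbb{R}^2$.

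The only real obstacle is the bookkeeping in the centre computation: one must prove both necessity and sufficiency of the conditions on the coefficients in every degree and keep careful track of the $\lambda$-twisted commutation relations and of the signs in the graded commutator --- in particular the point that $e_1$, $e_2$ and $u^{-1}v^{-1}du\,dv$ graded-commute with the degree-$1$ generators $du,dv$, not merely with $u$ and $v$. Everything after that is a direct appeal to the reduction theorem and to the computation of $\pi^1(\exterior V)$.
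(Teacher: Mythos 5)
Your proposal is correct and follows essentially the same route as the paper: identify $Z_g(\Omega\A_\theta)$ as $\exterior V$ for a two-dimensional $V$ spanned by $u^{-1}du$ and $v^{-1}dv$ with $d=0$, then apply the reduction to the graded centre and Proposition \ref{prop-product_with_wedge_V}. The only cosmetic difference is that the paper reads off $Z_g(\Omega\A_\theta)$ as the $\mathbb T^2$-invariants of $\Omega\A_\theta$, whereas you compute it degree by degree via the graded commutation relations (and you explicitly note that property Q holds, which the paper leaves implicit) --- both are the same calculation in disguise.
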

\proof
It is well-known that $Z(\A_\theta)$ is trivial, see for instance \cite[Corl. 12.12]{GraciaBondia}). In fact, this result extends to the differential forms for which we have $Z_g(\Omega \A_\theta)= (\Omega\A_\theta)^{\mathbb T^2}$, the subalgebra of invariant vectors for the action $\alpha$ of $\mathbb T^2$. We see that $Z_g(\Omega^1 A_\theta)=u^{-1}du\cdot \mathbb C \oplus v^{-1}dv \cdot \mathbb C$ and $Z_g(\Omega^2 \A_\theta)=u^{-1}du\cdot v^{-1}dv \cdot \mathbb C$. We conclude that the dga $Z_g(\Omega \A)$ is isomorphic to the dga $\exterior V$ where $V$ is the two-dimensional vector space $u^{-1}du\cdot\mathbb C\oplus v^{-1}dv\cdot\mathbb C$. From Proposition \ref{prop-product_with_wedge_V} we then conclude that the fundamental group of $\Omega \A_\theta$ is isomorphic to (the algebraic hull of) $\mathbb R^2$.
\endproof

Apparently the flat connections on fgp $\Omega\A_\theta$-bimodules correspond to continuous representations of $\mathbb R^2$. We can give the correspondence explicitly. Any continuous representation of $\mathbb R^2$ on a vector space $W$ is given by $(t_1,t_2)\to \exp(t_1\alpha+t_2\beta)$ with $\alpha,\beta \in \End(W)$ commuting endomorphisms of the vector space. The corresponding module is $\Omega \E=W\tensor \Omega \A_\theta$, and the connection is given by
\begin{align*}
&\nabla:W\tensor \Omega \A_\theta\to W\tensor \Omega \A_\theta\\
&\nabla(w\tensor a)=w\tensor da+\alpha(w)\tensor u^{-1}du\cdot a+\beta(w)\tensor v^{-1}dv\cdot a.
\end{align*}
Note that all fgp $\A_\theta$-bimodules are free by Lemma \ref{lem-graded_centre}, as $Z_g( \A_\theta)=\mathbb C$.

\begin{remark}
This should be compared to \cite{SuijlekomMahanta}, which considered the irrational rotation algebra with a holomorphic structure as a noncommutative elliptic curve, with resulting fundamental group equal to (the algebraic hull of) $\mathbb Z$. 
\end{remark}

\begin{proposition}
  \label{ex-rational_torus}
  Let $\theta$ be rational. Then the fundamental group of $\Omega \A_\theta$ coincides with that of the classical manifold $\mathbb T^2$, {\em i.e.} it is (the algebraic hull of) $\mathbb Z^2$.
  \end{proposition}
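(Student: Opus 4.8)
The plan is to follow the strategy of Proposition \ref{ex-irrational_torus}: by Theorem \ref{thm-reduction_to_commutative_case} the category $\C_\flat(\Omega\A_\theta)$ is equivalent to $\C_\flat(Z_g(\Omega\A_\theta))$, so it suffices to identify the dga $Z_g(\Omega\A_\theta)$ explicitly. Write $\theta=p/q$ in lowest terms, so that $\lambda^q=e^{2\pi ip}=1$ and the unitaries $U:=u^q$, $V:=v^q$ are central in $\A_\theta$. First I would record the (standard) computation that $u^mv^n$ is central in $\A_\theta$ if and only if $q\mid m$ and $q\mid n$ (compare \cite[Ch.~12]{GraciaBondia}), whence $Z(\A_\theta)=\{\sum a_{mn}U^mV^n:(a_{mn})\in\S(\mathbb Z^2)\}\cong C^\infty(\mathbb T^2)$, the coordinate algebra of the ordinary $2$-torus.

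Next I would determine $Z_g(\Omega\A_\theta)$ in positive degrees directly from the commutation relations between $u,v,du,dv$. Since $[\xi,-]$ is a graded derivation and $\Omega\A_\theta$ is generated by $u,v$ (and their inverses) together with $du,dv$, a degree-$1$ element $\xi=a\,du+b\,dv$ lies in the graded centre exactly when it graded-commutes with each of $u,v,du,dv$. Expanding $a,b$ in the basis $u^mv^n$, graded-commutation with $u$ and $v$ will force $a\in u^{-1}Z(\A_\theta)$ and $b\in v^{-1}Z(\A_\theta)$, i.e. $\xi=f\,u^{-1}du+g\,v^{-1}dv$ with $f,g\in Z(\A_\theta)$, after which one checks that the remaining conditions coming from $du$ and $dv$ are automatically satisfied. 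This would give $Z_g(\Omega^1\A_\theta)=Z(\A_\theta)\,u^{-1}du\oplus Z(\A_\theta)\,v^{-1}dv$ and, analogously, $Z_g(\Omega^2\A_\theta)=Z(\A_\theta)\,u^{-1}du\cdot v^{-1}dv$. Writing $\omega_1=u^{-1}du$, $\omega_2=v^{-1}dv$, one has $\omega_i^2=0$, $\omega_1\omega_2=-\omega_2\omega_1$, $d\omega_i=0$, and $U^{-1}dU=q\,\omega_1$, $V^{-1}dV=q\,\omega_2$; hence $Z_g(\Omega\A_\theta)$ is isomorphic, as a dga, to $\exterior(\mathbb C\omega_1\oplus\mathbb C\omega_2)$ over $C^\infty(\mathbb T^2)$ with $\omega_1,\omega_2$ closed, that is, to the de Rham dga $\Omega\,\mathbb T^2$ of the classical torus. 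In particular $Z_g(\Omega\A_\theta)$ is a graded commutative quantum metric dga with connected Gelfand spectrum, so it satisfies property Q by Lemma \ref{lem-0_or_invertible} and $\pi^1(\Omega\A_\theta)$ is well-defined.

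Finally, combining this identification with Theorem \ref{thm-reduction_to_commutative_case} and Example \ref{ex:manifold}, I would conclude that $\C_\flat(\Omega\A_\theta)\simeq\C_\flat(Z_g(\Omega\A_\theta))\simeq\C_\flat(\Omega\,\mathbb T^2)\simeq\Rep(\pi_1(\mathbb T^2))$, so $\pi^1(\Omega\A_\theta)$ is the pro-algebraic completion (algebraic hull) of $\pi_1(\mathbb T^2)=\mathbb Z^2$; since $\widehat{Z_g(\A_\theta)}=\mathbb T^2$ is connected, this does not depend on the chosen base point. The step I expect to be the main obstacle is the explicit determination of $Z_g(\Omega\A_\theta)$ in the second paragraph, in particular the verification that graded-commuting with $du$ and $dv$ imposes nothing beyond $Z(\A_\theta)\{u^{-1}du,\,v^{-1}dv\}$: this is a finite but somewhat delicate bookkeeping with the relations $u\,dv=\lambda\,dv\,u$, $v\,du=\bar\lambda\,du\,v$ and $du\,dv=-\lambda\,dv\,du$.
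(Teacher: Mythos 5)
Your proposal is correct and follows essentially the same route as the paper: write $\theta=p/q$, identify $Z_g(\A_\theta)\cong C^\infty(\mathbb T^2)$ via the central unitaries $u^q,v^q$, observe $Z_g(\Omega^1\A_\theta)=Z_g(\A_\theta)u^{-1}du\oplus Z_g(\A_\theta)v^{-1}dv$ (and similarly in degree two) so that $Z_g(\Omega\A_\theta)\cong\Omega\mathbb T^2$, and then invoke Theorem \ref{thm-reduction_to_commutative_case} together with Example \ref{ex:manifold}. The only differences are cosmetic: you spell out the verification of the exterior-algebra relations for $\omega_1,\omega_2$ and explicitly note that property Q holds and that base-point dependence is immaterial, points the paper leaves implicit.
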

\proof
We write $\theta=\frac pq$ with $p,q$ coprime integers. Then the centre of $ \A_\theta$ is given by power series in the commuting unitaries $u^q$ and $v^q$ with coefficients of radid decay, and thus isomorphic to the algebra $C^\infty(\mathbb T^2)$ (see also \cite[Corl. 12.3]{GraciaBondia}). Furthermore, we have $Z_g(\Omega^1 \A_\theta)=Z_g(\A_\theta)\cdot u^{-1}du \oplus Z_g(\A_\theta)\cdot v^{-1}dv$. This is also generated by $u^q$ and $v^q$ and their derivations, as $u^{-q}du^q=q\cdot u^{-1}du$ and $v^{-q}dv^q=q\cdot v^{-1}dv$. So $Z_g(\Omega^1 \A_\theta)\cong \Omega^1\mathbb T^2$. Similarly, $Z_g(\Omega^2 \A_\theta)\cong \Omega^2\mathbb T^2$. We see that $Z_g(\Omega \A_\theta)\cong \Omega\mathbb T^2$ and hence that the fundamental group of $\Omega \A_\theta$ is the same as that of the classical manifold $\mathbb T^2$.
\endproof

\begin{remark}
For any $\theta$ we have an inclusion map $Z_g(\Omega \A_\theta)\hookrightarrow\Omega \A_0$. This gives a map $\pi^1(\Omega \A_0)\to \pi^1(Z_g(\Omega \A_\theta))=\pi^1(\Omega \A_\theta)$. In the case that $\theta$ is irrational, this comes from the inclusion $\mathbb Z^2\to\mathbb R^2$. In the case that $\theta=\frac pq$ it comes from the multiplication $\mathbb Z^2\xrightarrow{\cdot q}\mathbb Z^2$. With this map we can distinguish the rational rotation algebras for different values of $q$.

For any $\theta$ the flat connections over $\Omega \A_\theta$ correspond to the continuous representations of the group $(\mathbb Z+\theta\mathbb Z)^2$: for irrational $\theta$, this is a dense subgroup of $\mathbb R^2$ which has the same continuous representations as $\mathbb R^2$ (see Lemma \ref{lem-reps_dense_subgroup_R} in the appendix), and for rational $\theta$ we have $(\mathbb Z+\theta\mathbb Z)^2\cong \mathbb Z^2$. The map $\pi^1(\Omega \A_0)\to \pi^1(\Omega \A_\theta)$ is then always given by the inclusion $\mathbb Z^2 \hookrightarrow (\mathbb Z+\theta\mathbb Z)^2$.
\end{remark}

\subsection{Higher-dimensional noncommutative tori}\label{subsection-higher_tori}
The irrational tori can be generalised to higher dimensions, as in \cite{Rieffel-higherdimensionaltori}.
\label{ex-higher_dimensional_tori} Let $\Theta$ be a skew-symmetric $n\times n$ matrix with coefficients in $\mathbb R$. We use the notation of \cite[Sect. 12.2]{GraciaBondia}. The algebra $ \A_\Theta$ is generated by unitaries $u_1,\ldots,u_n$ satisfying $u_ku_l=e^{2\pi i\Theta_{kl}}u_lu_k$. We can also write this as $u_ku_l=\tau(e_k,e_l)^2u_lu_k$ where $\tau:(\mathbb Z^n)^2\to \mathbb C$ is the two-cocycle defined by $\tau(r,s)=\exp(\pi i r^t\Theta s)$ and $e_k\in \mathbb Z^n$ denotes the $k$-th unit vector. A general term in $\A_\Theta$ is a power series expansion in the $u_k$ with coefficients in the Schwartz space $\S(\mathbb Z^n)$. It is the noncommutative interpretation of the quotient of $\mathbb R^n$ by $\mathbb Z^n+\Theta\mathbb Z^n$, which is simply $\mathbb R^n/\mathbb Z^n$ if $\Theta=0$. Note that we get the two-dimensional noncommutative torus back by taking $n=2$ and the matrix $\Theta=\left(\begin{smallmatrix}0&\theta\\-\theta&0\end{smallmatrix}\right)$. 

For any $r\in \mathbb Z$ we define the \emph{Weyl element}
\[u^r=\exp\left(\pi i\sum_{j<k}r_j\Theta_{jk}r_k\right)u_1^{r_1}\cdots u_n^{r_n}.\]
These are linearly independent and generate the algebra $ \A_\Theta$, and they satisfy
\[u^ru^s=\tau(r,s)u^{r+s}.\]

The one-form module $\Omega^1 \A_\Theta$ is free with generators $\{u_k^{-1}du_k\}$. We use these generators because they are in the centre of $\Omega^1  \A_\Theta$. The two-form module $\Omega^ 2 \A_\Theta$ is free with generators $\{u_k^{-1}du_k\cdot u_l^{-1}du_l,k<l\}$, {\em et cetera}.


\begin{proposition}
\label{prop:pi1-n-tori}
The fundamental group $\pi^1(\Omega \A_\Theta)$ of the $n$-dimensional noncommutative torus is the algebraic hull of $\mathbb Z^n + \Theta \mathbb Z^n$. 
\end{proposition}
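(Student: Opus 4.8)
The plan is to follow the route used for $n=2$ in Propositions~\ref{ex-irrational_torus} and~\ref{ex-rational_torus}: compute the graded centre $Z_g(\Omega\A_\Theta)$, recognise it as a dga to which Proposition~\ref{prop-product_with_wedge_V} applies, and then identify the resulting group with the algebraic hull of $\mathbb Z^n+\Theta\mathbb Z^n$ by an argument about topological groups.

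\emph{Computing the graded centre.} Since $u^ru^s=\tau(r,s)u^{r+s}$ and $\tau(r,s)\tau(s,r)^{-1}=\exp(2\pi i\,r^t\Theta s)$, a Weyl element $u^r$ is central in $\A_\Theta$ exactly when $\Theta r\in\mathbb Z^n$. Put $\Lambda=\{r\in\mathbb Z^n:\Theta r\in\mathbb Z^n\}$, a free abelian group of rank $d\le n$; then $Z(\A_\Theta)$ is the closure of the linear span of $\{u^r:r\in\Lambda\}$, and because $\tau$ restricted to $\Lambda$ is symmetric and $\{\pm1\}$-valued, hence trivial in $H^2(\Lambda,\mathbb T)$, one obtains $Z(\A_\Theta)\cong C^\infty(\mathbb T^d)$. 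The forms $\eta_k:=u_k^{-1}du_k$ are central and $\Omega^k\A_\Theta$ is free on the products of $k$ distinct $\eta_j$, so a form is central iff all of its coefficients are, giving $Z_g(\Omega\A_\Theta)\cong Z(\A_\Theta)\otimes\exterior\langle\eta_1,\dots,\eta_n\rangle$ as bigraded algebras. Its differential is determined by $d\eta_k=-\eta_k^2=0$ (the last equality by graded commutativity of $Z_g(\Omega\A_\Theta)$) and $du^r=u^r\sum_k r_k\eta_k$ for $r\in\Lambda$. Fixing a basis $f_1,\dots,f_d$ of $\Lambda$ and setting $\mu_a=\sum_k(f_a)_k\eta_k=(u^{f_a})^{-1}d(u^{f_a})$, the $\mu_a$ are linearly independent and the differential of $Z(\A_\Theta)\cong C^\infty(\mathbb T^d)$ takes values in $Z(\A_\Theta)\otimes\langle\mu_1,\dots,\mu_d\rangle$; choosing any complementary space $\langle\nu_1,\dots,\nu_{n-d}\rangle$ then yields an isomorphism of dgas
\[
Z_g(\Omega\A_\Theta)\ \cong\ \Omega\mathbb T^d\otimes\exterior\mathbb C^{\,n-d},
\]
the second tensor factor carrying the zero differential (in particular $Z_g(\Omega\A_\Theta)$ satisfies property~Q, so that $\pi^1(\Omega\A_\Theta)$ is defined).

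\emph{Passing to the fundamental group.} By Theorem~\ref{thm-reduction_to_commutative_case}, $\pi^1(\Omega\A_\Theta)=\pi^1\bigl(\Omega\mathbb T^d\otimes\exterior\mathbb C^{\,n-d}\bigr)$. As $\Omega\mathbb T^d$ is a graded commutative dga satisfying property~Q with $\pi^1(\Omega\mathbb T^d)$ the algebraic hull of $\mathbb Z^d$ (Example~\ref{ex:manifold}), Proposition~\ref{prop-product_with_wedge_V} shows this group is the algebraic hull of $\mathbb Z^d\times\mathbb R^{\,n-d}$. It then remains to match this with the algebraic hull of $\Gamma:=\mathbb Z^n+\Theta\mathbb Z^n$. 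The closure $\overline\Gamma$ of $\Gamma$ in $\mathbb R^n$ is a closed subgroup containing the lattice $\mathbb Z^n$, hence isomorphic to $\mathbb R^a\times\mathbb Z^{\,n-a}$ with $a=\dim\overline\Gamma^{0}$; moreover $\overline\Gamma/\mathbb Z^n$ is the closure in $\mathbb T^n$ of the subgroup generated by the columns of $\Theta$, and its annihilator in $\mathbb Z^n$ is precisely $\Lambda$, so $\dim(\overline\Gamma/\mathbb Z^n)^0=n-d$ and therefore $a=n-d$, i.e.\ $\overline\Gamma\cong\mathbb R^{\,n-d}\times\mathbb Z^{d}$. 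Finally $\Gamma$ is dense in $\overline\Gamma$, so by the density argument of Lemma~\ref{lem-reps_dense_subgroup_R} (applied in each $\mathbb R$-factor) the two groups have equivalent categories of finite-dimensional continuous representations, hence the same algebraic hull; this gives $\pi^1(\Omega\A_\Theta)\cong$ algebraic hull of $\mathbb Z^n+\Theta\mathbb Z^n$. The delicate step is the computation of $Z_g(\Omega\A_\Theta)$: one must check that there are no central elements beyond the Weyl elements indexed by $\Lambda$ and, what is essential for invoking Proposition~\ref{prop-product_with_wedge_V}, that the differential on $Z_g(\Omega\A_\Theta)$ genuinely splits off an exterior algebra with vanishing differential. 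Once this is established, relating the rank of $\Lambda$ to $\dim\overline\Gamma^{0}$, and hence the two algebraic hulls, is routine bookkeeping.
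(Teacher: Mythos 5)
Your proof is correct and follows essentially the same route as the paper's: define $\Lambda=\{r\in\mathbb Z^n:\Theta r\in\mathbb Z^n\}$, identify $Z_g(\A_\Theta)\cong C^\infty(\mathbb T^m)$ with $m=\mathrm{rank}\,\Lambda$, decompose the degree-one central forms into a piece coming from $d$ on $Z_g(\A_\Theta)$ and a complementary exterior factor with zero differential, invoke Proposition~\ref{prop-product_with_wedge_V} to get the algebraic hull of $\mathbb Z^m\times\mathbb R^{\,n-m}$, and finally match this with $\mathbb Z^n+\Theta\mathbb Z^n$ by a density argument. The only difference is cosmetic: you work with the basis $\eta_k=u_k^{-1}du_k$ and then change to $\mu_a,\nu_b$, whereas the paper directly chooses Weyl elements $u^{r_k}$ for an extension $r_1,\dots,r_n$ of a $\Lambda$-basis and uses $u^{-r_k}du^{r_k}$ as the preferred basis of central one-forms; and you spell out the closure computation for $\overline{\mathbb Z^n+\Theta\mathbb Z^n}$ in slightly more detail.
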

\proof
Define the lattice
\begin{equation}
  \label{eq:lattice}
\Lambda=\{r\in\mathbb Z^n\mid \Theta r\in \mathbb Z^n\}
\end{equation}
(it is reciprocal to the lattice used in \cite[Sect. 12.2]{GraciaBondia}).  Let $r_1,\ldots,r_m$ be a basis of $\Lambda$ and let $r_{m+1},\ldots,r_n$ be elements of $\mathbb Z^n$ such that $r_1,r_2,\ldots,r_n$ are linearly independent. Then $Z_g(\A_\Theta)$ is generated by the $m$ independent unitaries $u^{r_k}$, so it is isomorphic to $C^\infty(\mathbb T^m)$, the algebra of smooth functions on the (commutative) $m$-torus. 

Let $V\subseteq \Omega^1 \A_\Theta$ be the $m$-dimensional vector space spanned by $u^{-r_k}du^{r_k}, 1\leq k\leq m$. Then $\Omega\mathbb T^m$ is isomorphic to the sub-dga $Z_g( \A_\Theta)\tensor \exterior V$ of $\Omega A_\Theta$. Also, let $W\subseteq \Omega^1\Omega \A_\Theta$ be the $(n-m)$-dimensional vector space spanned by $u^{-r_k}du^{r_k},m+1\leq k\leq n$. Thus
\[
\Omega \A_\Theta= \Omega \A_\Theta\tensor\exterior(V\oplus W).
\]
Since all basis elements of $V$ and $W$ are in the graded centre we get
\[
Z_g(\Omega \A_\Theta)=Z_g(\Omega \A_\Theta)\tensor\exterior(V\oplus W)\cong \Omega\mathbb T^m\tensor\exterior W.
\]
With this we can compute the fundamental group of $\Omega \A_\Theta$. The fundamental group of the torus $\mathbb T^m$ is $\mathbb Z^m$. Then we see from Proposition \ref{prop-product_with_wedge_V} that the fundamental group of $\Omega \A_\Theta$ is the algebraic hull of $\mathbb Z^m\times\mathbb R^{n-m}$. The subgroup $\mathbb Z^n+\Theta\mathbb Z^n\subseteq \mathbb R^n$ is dense in $\{u\in\mathbb R^n\mid r^tu\in\mathbb Z\text{ for all }r\in\Lambda\}$, and this is isomorphic to $\mathbb Z^m\times\mathbb R^{n-m}$. So for any $\Theta$ the fundamental group of $\Omega \A_\Theta$ equals the algebraic hull of $\mathbb Z^n+\Theta\mathbb Z^n$. 
\endproof

\begin{remark}
If $G$ is a (discrete) group that acts freely and properly on $\mathbb R^n$, then $\mathbb R^n\to \mathbb R^n/G$ is the universal cover of $\mathbb R^n/G$ and it is a $G$-principal bundle. In the example above we show that the fundamental group corresponding to the noncommutative realisation of the quotient of $\mathbb R^n$ by $\mathbb Z^n+\Theta\mathbb Z^n$ can be identified with $\mathbb Z^n+\Theta\mathbb Z^n$, which is exactly as we would expect. Of course, since we get the fundamental group from its representations we cannot actually distinguish between $\mathbb Z^n+\Theta\mathbb Z^n$ and $\mathbb Z^m\times\mathbb R^{n-m}$ as fundamental group of $A_\Theta$.
\end{remark}

\subsection{Toric noncommutative manifolds}
It is possible to deform any manifold $M$ that carries an action of a torus in a similar manner to the deformation of the tori described above. This is described in full detail in \cite{CL01,CD02}, and the differential graded algebra that we use is described in \cite[Sect. 12]{CD02}. 

Let $M$ be a manifold and let $\sigma:\mathbb T^n\to \Aut(M)$ be a smooth effective action of the $n$-dimensional torus. This defines an action of $\mathbb T^n$ on $\Omega M$, still denoted by $\sigma$. The deformation of $C^\infty(M)$ is conveniently described as the following fixed-point subalgebra:
$$
C^\infty(M_\Theta):= ( C^\infty(M) \widehat \otimes \A_\Theta)^{\sigma \otimes \alpha^{-1}}= \left\{ T \in C^\infty(M) \widehat \otimes \A_\Theta: (\sigma_t \otimes \alpha_{t}^{-1})(T) = T , \text{ for all } t \in \mathbb T^n \right\}
$$
where $\widehat \otimes$ denotes the (projective) tensor product of Fr\'echet algebras and $\alpha$ is the action of $\mathbb T^n$ on $\A_\Theta$ given by $\alpha_t(u^r)=e^{irt}u^r$. Similarly, a dga is defined by
$$
\Omega M_\Theta:= ( \Omega (M) \widehat \otimes \A_\theta)^{\sigma \otimes \alpha^{-1}}
$$
The differential in $\Omega M_\Theta$ is given by $ d\otimes 1$. 

In fact, we may write any element $\omega \in \Omega M$ as a series expansion in the Weyl elements $u^r \in \Omega \A_\Theta$:
$$
\omega = \sum_{r \in \mathbb Z^n} \omega_r  \otimes u^r
$$
in terms of homogeneous elements $\omega_r \in \Omega M$ for the torus action, {\em i.e.} $\sigma_t ( \omega_r) = e^{i r t}  \omega_r$. This series expansion is convergent with respect to the Fr\'echet topology on $\Omega M$ (see \cite[Ch. 2]{deformationquantisation}). 
Similar to \cite{cacic} we introduce the following subgroup of $\mathbb T^n$ dual to the lattice $\Lambda \subset \mathbb Z^n$ defined by equation \eqref{eq:lattice}:
$$
\Gamma=\{t\in\mathbb T^n\mid  t \cdot  r =0  \mod \mathbb Z \text{ for all }r\in\Lambda\}
$$

\begin{proposition}
The graded center of $\Omega M_\Theta$ is given by
  $$
Z_g(\Omega M_\Theta)  \cong (\Omega M )^{\Gamma}
$$
Consequently, the fundamental group is $\pi^1(\Omega M_\Theta) = \pi^1( (\Omega M )^{\Gamma})$.
  \end{proposition}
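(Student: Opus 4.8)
The plan is to compute $Z_g(\Omega M_\Theta)$ explicitly and then deduce the statement on $\pi^1$ from Theorem~\ref{thm-reduction_to_commutative_case}. Recall that every element of $\Omega M_\Theta$ has a Fr\'echet-convergent Weyl expansion $\omega=\sum_{r\in\mathbb Z^n}\omega_r\otimes u^r$, and the fixed-point condition $(\sigma_t\otimes\alpha_t^{-1})(\omega)=\omega$ together with $\alpha_t(u^r)=e^{irt}u^r$ says exactly that each $\omega_r\in\Omega M$ is homogeneous of degree $r$ for the torus action, $\sigma_t(\omega_r)=e^{irt}\omega_r$. Since the Weyl elements lie in degree $0$ of $\Omega\A_\Theta$ and $u^ru^s=\tau(r,s)u^{r+s}$, graded commutativity of $\Omega M$ gives, for homogeneous terms,
\[
[\omega_r\otimes u^r,\ \eta_s\otimes u^s]=\bigl(\tau(r,s)-\tau(s,r)\bigr)\,\omega_r\eta_s\otimes u^{r+s}.
\]
Because $\tau(s,r)=\tau(r,s)^{-1}$ and $\tau(r,s)^2=\exp(2\pi i\,r^t\Theta s)$, the scalar prefactor vanishes precisely when $r^t\Theta s=-(\Theta r)\cdot s\in\mathbb Z$.

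First I would prove the inclusion $\bigl\{\sum_{r\in\Lambda}\omega_r\otimes u^r\bigr\}\subseteq Z_g(\Omega M_\Theta)$: if $r\in\Lambda$ then $\Theta r\in\mathbb Z^n$, so $r^t\Theta s\in\mathbb Z$ for every $s\in\mathbb Z^n$ and the commutator above vanishes. For the reverse inclusion, suppose $\omega=\sum_r\omega_r\otimes u^r$ is graded central; testing against an arbitrary homogeneous element $\eta_s\otimes u^s\in\Omega M_\Theta$ and comparing coefficients of $u^{r+s}$ forces $\omega_r\cdot(\Omega^0 M)_s=0$ for every $r$ and every $s$ with $r^t\Theta s\notin\mathbb Z$. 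If $r\notin\Lambda$ such exponents $s$ exist (indeed a whole coset of $\Lambda$ of them). Here I would invoke effectiveness of the $\mathbb T^n$-action: the set of points with trivial stabiliser is open and dense (for an abelian group the principal stabiliser is contained in every stabiliser, hence equals the ineffective kernel, hence is trivial), and over a free orbit a tubular neighbourhood is equivariantly $\mathbb T^n\times B$, so for each $s$ one builds a function in $C^\infty(M)_s=(\Omega^0 M)_s$ that is nonzero at a prescribed point of that orbit. Hence $\omega_r$, a smooth form killed by all such functions, vanishes on a dense set, so $\omega_r=0$. This yields $Z_g(\Omega M_\Theta)=\bigl\{\sum_{r\in\Lambda}\omega_r\otimes u^r:\omega_r\in(\Omega M)_r\bigr\}$ (a closed subspace, so the Weyl-coefficient description is legitimate in the Fr\'echet topology).

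It remains to untwist. For $r,s\in\Lambda$ one has $\tau(r,s)/\tau(s,r)=\exp(2\pi i\,r^t\Theta s)=1$, so $\tau|_\Lambda$ is a symmetric $2$-cocycle on the free abelian group $\Lambda$, hence a coboundary: there is $\mu\colon\Lambda\to U(1)$ with $\tau(r,s)=\mu(r)\mu(s)\mu(r+s)^{-1}$. Then $\sum_{r\in\Lambda}\omega_r\otimes u^r\mapsto\sum_{r\in\Lambda}\mu(r)\,\omega_r$ absorbs the twist and commutes with the differential $d\otimes 1$, so it is an isomorphism of dga's from $Z_g(\Omega M_\Theta)$ onto the completed sum $\bigoplus_{r\in\Lambda}(\Omega M)_r$. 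Finally, since $\Gamma=\Lambda^\perp$ inside $\mathbb T^n$ and $\Lambda\subseteq\mathbb Z^n$ is closed, Pontryagin biduality gives $\{r\in\mathbb Z^n:e^{irt}=1\ \text{for all }t\in\Gamma\}=\Lambda$, whence $\bigoplus_{r\in\Lambda}(\Omega M)_r=(\Omega M)^\Gamma$ as dga's. This proves $Z_g(\Omega M_\Theta)\cong(\Omega M)^\Gamma$, and the statement on the fundamental group is then immediate from Theorem~\ref{thm-reduction_to_commutative_case} (together with the isomorphism just established), giving $\pi^1(\Omega M_\Theta)=\pi^1\bigl(Z_g(\Omega M_\Theta)\bigr)=\pi^1\bigl((\Omega M)^\Gamma\bigr)$.

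The main obstacle is the reverse inclusion in the second paragraph, i.e.\ ruling out central components $\omega_r$ with $r\notin\Lambda$. This is exactly where effectiveness of the torus action is indispensable (otherwise $\Lambda$ would have to be enlarged according to the ineffective kernel), and it is the only step requiring genuine differential-geometric input — density of free orbits and the local normal form near them — as opposed to the purely algebraic bookkeeping with the cocycle $\tau$.
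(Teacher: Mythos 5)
Your proof is correct and follows the same route as the paper's: expand in Weyl elements, compute the graded commutator, identify the admissible degrees with the lattice $\Lambda$, and dualise to $\Gamma$. However, the paper's proof is only a two-sentence sketch, and you supply precisely the two steps it skips or mishandles. First, the paper does not argue the reverse inclusion (ruling out central components with $r\notin\Lambda$) at all; your reduction to effectiveness via the principal orbit theorem, together with the observation that the principal isotropy of an effective abelian action is trivial, is the right way to obtain the needed density of nonvanishing functions in each nonzero isotypic component. Second, the paper asserts that $\tau(r,s)=1$ for all $r\in\Lambda$, $s\in\mathbb Z^n$, which is false — one only gets $\tau(r,s)^2=\exp(2\pi i\,r^t\Theta s)=1$, i.e.\ $\tau(r,s)=\pm 1$; this does make the graded commutator vanish (since then $\tau(r,s)=\tau(s,r)$), but it does \emph{not} by itself give an algebra isomorphism with $(\Omega M)^\Gamma$. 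Your coboundary untwisting $\mu\colon\Lambda\to U(1)$, available because the restricted cocycle $\tau|_\Lambda$ is symmetric and $\Lambda$ is free abelian, is exactly what is needed to turn the abstract identification into an honest dga isomorphism commuting with $d\otimes 1$. In short: same approach, but your version is a genuine proof where the paper's is a sketch with one imprecise claim.
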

\proof
The graded center of $\Omega M_\Theta$ is given by elements of the form $\omega = \sum_{r \in \Lambda} \omega_r \otimes u^r$ as for $r \in \Lambda$ we have $\tau(r,s)=1$ for any $s\in \mathbb Z^n$. We can use the subgroup $\Gamma < \mathbb T^n$ to select these vectors by setting $\sigma_t(\omega) = \omega$ for all $t \in \Gamma$. 
\endproof

\begin{example}
  Let $M=\mathbb T^n$ and let the torus $\mathbb T^n$ act on itself by addition. The algebra $C(M_\Theta)$ is then the same as the noncommutative torus $\Omega \A_\Theta$, and according to the above result we have $\pi^1(\Omega M_\Theta) = \pi^1(\Omega (\mathbb T^n)^\Gamma)$. Let us confront this with Proposition \ref{prop:pi1-n-tori}. We may write
  $$
  \Omega(\mathbb T^n) = C^\infty(\mathbb T^n) \otimes \exterior (V \oplus W) 
  $$
  where $V,W$ are defined as in the proof of Proposition \ref{prop:pi1-n-tori} (but with $\Theta=0$). But then
    $$
  \Omega(\mathbb T^n)^\Gamma =  (C^\infty(\mathbb T^m) \otimes \exterior V) \otimes \exterior W \cong \Omega (\mathbb T^m) \otimes \exterior W.
  $$
Accordingly, we have $\pi^1(\Omega(\mathbb T^n)^\Gamma)= \mathbb Z^{m} \times \mathbb R^{n-m}$, as desired. 
\end{example}

\begin{example}
  Consider the three-sphere $\mathbb S^3$. It is parametrised by two complex numbers $\alpha,\beta$ with $|\alpha|^2+|\beta^2|=1$. The torus acts on this by $(t,s)\cdot (\alpha,\beta)=(\exp(2\pi i t)\alpha,\exp(2\pi i s)\beta)$. 
  Suppose that $\theta=\frac pq$ where $p$ and $q$ have no common factors. Then $\Lambda=q\mathbb Z^2$ and $\Gamma=(\frac1q\mathbb Z^2)/\mathbb Z^2\subset \mathbb T^2$. We get that $\pi^1(\Omega\mathbb S^3_\theta) = \pi^1((\Omega\mathbb S^3)^{(\mathbb Z/q \mathbb Z)^2})$. 
\end{example}
In the case that $\theta$ is irrational we can actually completely calculate the fundamental group of the toric noncommutative 3-sphere. 
\begin{proposition}
  Let $\theta$ be irrational. Then the fundamental group $\pi^1(\Omega\mathbb S^3_\theta)$ is the algebraic hull of $\mathbb R^2$. 
\end{proposition}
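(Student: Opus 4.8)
\emph{Step 1: reduction to the invariant dga.} By the preceding proposition $\pi^1(\Omega\mathbb S^3_\theta)=\pi^1\bigl((\Omega\mathbb S^3)^\Gamma\bigr)$, so it suffices to compute $\Gamma$. For the two--torus we have $\Theta=\left(\begin{smallmatrix}0&\theta\\-\theta&0\end{smallmatrix}\right)$, so $\Theta r=(\theta r_2,-\theta r_1)$ for $r\in\mathbb Z^2$; as $\theta$ is irrational this lies in $\mathbb Z^2$ only for $r=0$. Hence the lattice $\Lambda$ of \eqref{eq:lattice} is trivial, and therefore the dual subgroup $\Gamma=\{t\in\mathbb T^2\mid t\cdot r=0\bmod\mathbb Z\text{ for all }r\in\Lambda\}$ is all of $\mathbb T^2$. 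Thus $\pi^1(\Omega\mathbb S^3_\theta)=\pi^1\bigl((\Omega\mathbb S^3)^{\mathbb T^2}\bigr)$, and it remains to identify the right--hand side.

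\emph{Step 2: the dga $\mathcal D:=(\Omega\mathbb S^3)^{\mathbb T^2}$.} This is a graded commutative $*$-dga. I would first record that $\mathcal D^0=C^\infty(\mathbb S^3)^{\mathbb T^2}$ is the algebra of smooth functions of $u:=|\alpha|^2\in[0,1]$, so $\mathcal D^0\cong C^\infty[0,1]$, and that $\mathcal D$ is a quantum metric dga (the induced seminorm $L(f)=\|f'(u)\,du\|$ recovers a finite metric on $[0,1]$), so that $Z_g(\mathcal D)=\mathcal D$ satisfies property Q by Lemma~\ref{lem-0_or_invertible} and $\pi^1(\mathcal D)$ is defined. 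By Remark~\ref{rem:reduce-to-0}, an object of $\C_\flat(\mathcal D)$ is a finitely generated projective --- hence free, since all fgp $C^\infty[0,1]$-modules are free --- module $W\tensor C^\infty[0,1]$ equipped with a flat connection $\nabla_0\colon W\tensor C^\infty[0,1]\to W\tensor\mathcal D^1$. The crux is an explicit description of $\mathcal D^1$ and $\mathcal D^2$: I would show $\mathcal D^1$ is free of rank $3$ with basis $du$, $\theta_1:=\operatorname{Im}(\bar\alpha\,d\alpha)$, $\theta_2:=\operatorname{Im}(\bar\beta\,d\beta)$, using the classical description of $SO(2)$-invariant forms near the fixed circles $\{\alpha=0\}$ and $\{\beta=0\}$ together with a patching argument over the contractible interval, and likewise describe $\mathcal D^2$ and the images $d\theta_1,d\theta_2$.

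\emph{Step 3: flat connections and $\mathbb R^2$.} Writing $\nabla_0=d+A_0\,du+A_1\,\theta_1+A_2\,\theta_2$ with $A_i$ endomorphism--valued functions on $[0,1]$, a path--ordered exponential exactly as in Example~\ref{ex-[0,1]} gives a gauge transformation making $A_0=0$. Computing the curvature $\nabla_0^2=dA+A\wedge A$ in the basis of $\mathcal D^2$ from Step~2, one reads off that flatness is governed by the $\theta_1\wedge\theta_2$--component $[A_1,A_2]$ and reduces the connection to a commuting pair of constant endomorphisms, and conversely; isomorphism classes are then conjugacy classes of commuting pairs. This identifies $\C_\flat(\mathcal D)$ with $\C_\flat(\exterior V)$ for a $2$--dimensional $V$, that is, with $\Rep(\mathbb R^2)$ as in Example~\ref{ex-tensor_product_with_C_C} and Proposition~\ref{prop-product_with_wedge_V}. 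Hence $\pi^1(\mathcal D)$, and therefore $\pi^1(\Omega\mathbb S^3_\theta)$, is the algebraic hull of $\mathbb R^2$.

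\emph{Main obstacle.} The delicate point is the passage from the generic region $\mathbb S^3\setminus(\{\alpha=0\}\cup\{\beta=0\})\cong(0,1)\times\mathbb T^2$ --- over which the invariant dga is literally $\Omega(0,1)\,\widehat\tensor\,\exterior\mathbb C^2$, so that its fundamental group is visibly the algebraic hull of $\mathbb R^2$ by Proposition~\ref{prop-product_with_wedge_V} --- to all of $\mathbb S^3$. Because the $\mathbb T^2$--action fails to be free along the two fixed circles, the angular forms $d\phi_1,d\phi_2$ degenerate there and $\mathcal D^2$ is strictly larger than $\exterior^2\mathcal D^1$; the real work is to verify that imposing smoothness across these circles leaves the classification of flat connections by commuting pairs intact, i.e.\ that no curvature component beyond the $\theta_1\wedge\theta_2$ one survives as an obstruction.
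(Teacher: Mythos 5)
Your Steps~1--2 are sound: $\Gamma=\mathbb T^2$, so the question reduces to computing $\pi^1$ of the invariant dga $\mathcal D:=(\Omega\mathbb S^3)^{\mathbb T^2}$, and $\mathcal D^0\cong C^\infty[0,1]$ with $\mathcal D^1$ free of rank $3$ on $du,\theta_1,\theta_2$. The trouble is that the ``main obstacle'' you flag at the end is not a deferrable technicality: the claim in Step~3 that ``no curvature component beyond the $\theta_1\wedge\theta_2$ one survives as an obstruction'' is exactly what has to be checked, and when you check it the computation does \emph{not} reduce to $[A_1,A_2]=0$. The forms $\eta_i:=d\theta_i$ are smooth, nonzero elements of $\mathcal D^2$, and one has $du\wedge\theta_1=u\,\eta_1$ and $du\wedge\theta_2=-(1-u)\eta_2$, with $\{\eta_1,\eta_2,\theta_1\wedge\theta_2\}$ a free $C^\infty[0,1]$-basis of $\mathcal D^2$. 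Writing $\nabla_0=d+A_0\,du+A_1\theta_1+A_2\theta_2$ and gauging away $A_0$ by a path-ordered exponential as in Example~\ref{ex-[0,1]}, the curvature is
\[
\nabla_0^2=(uA_1'+A_1)\,\eta_1+\bigl(A_2-(1-u)A_2'\bigr)\,\eta_2+[A_1,A_2]\,\theta_1\wedge\theta_2,
\]
so flatness imposes, in addition to $[A_1,A_2]=0$, the ODE $(uA_1)'=0$. Since $uA_1$ vanishes at $u=0$ this forces $A_1\equiv0$, and symmetrically $A_2\equiv0$. So your explicit approach, when carried through, does not produce a pair of commuting endomorphisms at all; it makes every flat connection over $\mathcal D$ gauge-trivial. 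This is the opposite of what Step~3 asserts, and the proposal does not prove the statement.

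The paper's own proof is of a wholly different kind and deliberately avoids any hands-on classification of flat connections over $(\Omega\mathbb S^3)^{\mathbb T^2}$: it introduces the quotient map $\phi:\mathbb T^2\times[0,1]\to\mathbb S^3$ and a cut-off reparametrisation $\gamma$ with its lift $\tilde\gamma$, builds a factorisation $\Omega(\mathbb T^2\times[0,1])^{\mathbb T^2}\xrightarrow{\alpha}(\Omega\mathbb S^3)^{\mathbb T^2}\xrightarrow{\phi^*}\Omega(\mathbb T^2\times[0,1])^{\mathbb T^2}$ with $\phi^*\circ\alpha=(\mathbb T^2\times\gamma)^*$ and $\alpha\circ\phi^*=\tilde\gamma^*$, and invokes homotopy invariance (Theorem~\ref{thm-homotopy_invariance}) to conclude $\pi^1((\Omega\mathbb S^3)^{\mathbb T^2})\cong\pi^1(\Omega(\mathbb T^2\times[0,1])^{\mathbb T^2})$, the latter being $\Omega[0,1]\widehat\otimes\exterior\mathbb C^2$. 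I would encourage you to scrutinise that factorisation carefully in light of the direct computation above: $\phi^*$ sends every invariant $1$-form to one whose $d\phi_1$-component vanishes at $t=0$ (because $\phi^*\theta_1=\sin^2(\pi t/2)\,d\phi_1$), whereas $(\mathbb T^2\times\gamma)^*\bigl(g\,d\phi_1\bigr)=g(0)\,d\phi_1$ near $t=0$, which need not vanish; so the image of $(\mathbb T^2\times\gamma)^*$ is not obviously contained in the image of $\phi^*$. The tension between the two arguments is the real issue here, and resolving it is more than a routine verification.
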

\proof
We will compute the fundamental group of the deformed 3-sphere $\mathbb S^3_\theta$ using homotopy invariance. Let $0<\epsilon<\frac12$ and let $\gamma:[0,1]\to[0,1]$ be a smooth function that is 0 on $[0,\epsilon]$ and 1 on $[1-\epsilon,1]$. This gives a map $\mathbb T^2\times\gamma:\mathbb T^2\times[0,1]\to\mathbb T^2\times[0,1]$, which is $\mathbb T^2$-equivariant, using the natural action of $\mathbb T^2$ on $\mathbb T^2\times[0,1]$. This induces a map $(\mathbb T^2\times\gamma)^*:\Omega(\mathbb T^2\times[0,1])\hookrightarrow\Omega(\mathbb T^2\times[0,1])$, which restricts to a map $\Omega(\mathbb T^2\times[0,1])^{\mathbb T^2}\hookrightarrow\Omega(\mathbb T^2\times[0,1])^{\mathbb T^2}$. The functions in the image are constant in a neighbourhood of the edges, and the forms in the image are zero on this neighbourhood. Now consider the map $\phi:\mathbb T^2\times[0,1]\to \mathbb S^3$, sending $((u,v),t)$ to $(u\sin(\frac\pi2t),v\cos(\frac\pi2t))$ for $u,v\in\mathbb S^1$ and $t\in[0,1]$. This is a smooth map so it induces $\phi^*:\Omega\mathbb S^3\hookrightarrow\Omega(\mathbb T^2\times[0,1])$. Since $\phi$ is $\mathbb T^2$-equivariant, this restricts to $\phi^*:\Omega(\mathbb S^3)^{\mathbb T^2}\hookrightarrow\Omega(\mathbb T^2\times[0,1])^{\mathbb T^2}$. The image certainly contains all functions that are constant on a neighbourhood of the edges, and forms that are zero on a neighbourhood of the edges. In particular there is a factorisation $\Omega(\mathbb T^2\times[0,1])\xhookrightarrow{\alpha}\Omega(\mathbb S^3)^{\mathbb T^2}\xhookrightarrow{\phi^*}\Omega(\mathbb T^2\times[0,1])$, where $\alpha$ is the map $(\mathbb T^2\times\gamma)^*$ with restricted codomain. Now let $\tilde\gamma:\mathbb S^3\to\mathbb S^3$ be the function given by $\tilde\gamma(u\sin(\frac\pi2t),v\cos(\frac\pi2t))=(u\sin(\frac\pi2\gamma(t)),v\cos(\frac\pi2\gamma(t)))$. This is a smooth map: this is clear in the domain $t\in(0,1)$ and at $t=0,1$ it follows because $\gamma$ is constant at the edges. This then induces $\tilde\gamma^*:\Omega\mathbb S^3\hookrightarrow\Omega\mathbb S^3$, and since $\tilde\gamma$ is $\mathbb T^2$-invariant this also restricts to $\Omega(\mathbb S^3)^{\mathbb T^2}\hookrightarrow \Omega(\mathbb S^3)^{\mathbb T^2}$. Note that $\tilde\gamma\circ\phi=\phi\circ(\mathbb T^2\times\gamma)$. We get the following commutative diagram:
  \[\begin{tikzcd}[row sep=huge,column sep=huge]
  \Omega(\mathbb T^2\times[0,1])^{\mathbb T^2} & \Omega(\mathbb T^2\times[0,1])^{\mathbb T^2} \arrow[hook,l,"(\mathbb T^2\times\gamma)^*"] \arrow[hook,ld,"\alpha"]\\
  \Omega(\mathbb S^3)^{\mathbb T^2} \arrow[hook,u,"\phi^*"] & \Omega(\mathbb S^3)^{\mathbb T^2} \arrow[hook,l,"\tilde\gamma^*"] \arrow[hook,u,"\phi^*"].
  \end{tikzcd}\]
  Both the maps $(\mathbb T^2\times\gamma)^*$ and $\tilde\gamma$ are homotopic to the identity. This implies by Theorem \ref{thm-homotopy_invariance} that they induce the identity map on the fundamental groups. So $\pi^1(\Omega\mathbb S^3_\theta)=\pi^1(\Omega(\mathbb S^3)^{\mathbb T^2})=\pi^1(\Omega(\mathbb T^2\times[0,1])^{\mathbb T^2})$. Now clearly $\mathbb T^2\times[0,1]$ is homotopy equivalent to $\mathbb T^2$ using maps that are $\mathbb T^2$-equivariant, and it follows that $\pi^1(\Omega(\mathbb T^2\times[0,1]))^{\mathbb T^2})=\pi^1(\Omega(\mathbb T^2)^{\mathbb T^2})$. As in the previous subsection, this fundamental group is the algebraic hull of $\mathbb R^2$. So for irrational $\theta$ the fundamental group of $\mathbb S^3_\theta$ is the algebraic hull of $\mathbb R^2$.
  \endproof

 \section{Conclusion and outlook}

 \label{sect:concl}
 We have defined a notion of connections on finitely generated projective bimodules over a differential graded algebra $\Omega \A$. We have defined the category $\C_{\flat}(\Omega \A)$ of these bimodules with flat connections, and shown that it is equal to the category of flat connections over the graded centre $Z_g(\Omega \A)$. We have constructed a tensor product in this category and shown that it admits dual objects. The category is also abelian for a large class of noncommutative spaces. This was used to define an affine algebraic group scheme, which we called the fundamental group $\pi^1(\Omega \A)$ of the dga. After having established some crucial properties for $\pi^1$,  we computed the fundamental group for noncommutative tori, where we realised that it depends on the deformation parameter. 

The structure we introduced suggests the following some natural, still open problems:
 \begin{enumerate}
   \item Does the fundamental group respect products: is it true that $\pi^1(\Omega \A\tensor\Omega \B,p\tensor q)=\pi^1(\Omega \A,p)\times\pi^1(\Omega\B,q)$?
 \item Is the fundamental group $\pi^1$ a derived Morita invariant? 
   \end{enumerate}




\appendix

\section{Representations of dense subgroups of $\mathbb R$}
We have exploited Tannaka duality to reconstruct a group from the category of its representations, however, this only gives access to the pro-algebraic completion of the group. In other words, via this procedure many groups give rise to the same algebraic group.  We will illustrate this phenomenon for dense subgroups of $\mathbb R$. 

We consider representations of $\mathbb R$ and of dense subgroups. Let $V$ be a finite-dimensional vector space and $\alpha\in\End(V)$, and view $\mathbb R$ as additive topological group. There is a continuous representation $\pi:\mathbb R\to\GL(V)$ given by $\pi(t)=\exp(t\alpha)$. In fact every continuous representation has this form.

\begin{lemma}\label{lem-reps_of_R}
Let $V$ be a finite-dimensional vector space and $\pi:\mathbb R\to \GL(V)$ a continuous representation. Then there is a unique $\alpha\in\End(V)$ such that $\pi(t)=\exp(t\alpha)$ for all $t\in\mathbb R$.
\end{lemma}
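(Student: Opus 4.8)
The plan is to reduce the statement to a linear ordinary differential equation, the only genuine content being that a \emph{continuous} one-parameter subgroup of $\GL(V)$ is automatically differentiable. Once differentiability is in hand, the rest is the standard uniqueness argument for $\exp$.

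First I would upgrade continuity to $C^1$ by an integration trick. Since $t\mapsto \pi(t)$ is a continuous $\End(V)$-valued function, the Riemann integral $F(s):=\int_0^s\pi(u)\,du\in\End(V)$ is well defined and $F$ is $C^1$ with $F'(s)=\pi(s)$; in particular $F(s)/s\to\pi(0)=\id_V$ as $s\to0$, so $F(s_0)$ is invertible in $\End(V)$ for some small $s_0>0$. Using $\pi(t+u)=\pi(t)\pi(u)$ and a change of variables,
\[
\pi(t)F(s_0)=\int_0^{s_0}\pi(t+u)\,du=F(t+s_0)-F(t),
\]
hence $\pi(t)=\bigl(F(t+s_0)-F(t)\bigr)F(s_0)^{-1}$. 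The right-hand side is $C^1$ in $t$, being built from the $C^1$ function $F$; so $\pi$ is $C^1$ (and, bootstrapping, $C^\infty$, though $C^1$ suffices).

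Next I would differentiate the homomorphism relation $\pi(t+s)=\pi(t)\pi(s)$ with respect to $s$ at $s=0$, obtaining $\pi'(t)=\pi(t)\alpha$ with $\alpha:=\pi'(0)\in\End(V)$ and $\pi(0)=\id_V$. Since $t\mapsto\exp(t\alpha)$ solves the same linear initial value problem, uniqueness of solutions of linear ODEs — equivalently, noting that $\frac{d}{dt}\bigl(\exp(-t\alpha)\pi(t)\bigr)=0$, so $\exp(-t\alpha)\pi(t)$ is the constant $\id_V$ — forces $\pi(t)=\exp(t\alpha)$ for all $t\in\mathbb R$. Uniqueness of $\alpha$ is then immediate: differentiating $t\mapsto\exp(t\alpha)$ at $t=0$ recovers $\alpha$.

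The main obstacle is precisely the automatic-smoothness step; everything afterwards is routine. An alternative, closer in spirit to the path-ordered exponentials of Example \ref{ex-[0,1]}, would avoid integration: choose $\delta>0$ with $\|\pi(t)-\id_V\|<1$ for $|t|\le\delta$, observe that all $\pi(t)$ commute (as $\pi(s)\pi(t)=\pi(s+t)=\pi(t)\pi(s)$), hence the matrix logarithms $\log\pi(t)$, defined by the usual power series, also commute; on a small enough interval one then deduces $\log\pi(s+t)=\log\pi(s)+\log\pi(t)$ from the commuting identity $\exp(A)\exp(B)=\exp(A+B)$, so $t\mapsto\log\pi(t)$ is a continuous additive map into $\End(V)$, therefore $\mathbb R$-linear, i.e. $\log\pi(t)=t\alpha$ on $[-\delta,\delta]$; one finally extends to all of $\mathbb R$ via $\pi(t)=\pi(t/N)^N$.
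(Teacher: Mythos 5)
Your proposal is correct, and your primary argument takes a genuinely different route from the paper's. The paper's proof is the ``logarithm plus dyadic density'' method: fix one $t$ with $\|\pi(t)-1\|<1$, set $\alpha=\tfrac1t\log\pi(t)$, show $\log\pi(t/2)=\tfrac12\log\pi(t)$ via the scalar identity $\log(x^2)=2\log x$ applied to the commuting operator $\pi(t/2)$, then bootstrap to $\pi(tm2^{-n})=\exp(tm2^{-n}\alpha)$ and invoke density of $\{tm2^{-n}\}$ in $\mathbb R$ together with continuity. Your main argument instead upgrades regularity first: the integration trick $\pi(t)=(F(t+s_0)-F(t))F(s_0)^{-1}$ with $F(s)=\int_0^s\pi$ shows $\pi$ is $C^1$, after which differentiating the cocycle relation gives the linear ODE $\pi'=\pi\alpha$ and uniqueness of solutions finishes it. Both are standard and correct; the paper's approach buys you elementarity (no integration, no ODE theory, just power series and density), which matters because the paper then wants the generalisation in Lemma~\ref{lem-reps_dense_subgroup_R} to dense subgroups $G\subset\mathbb R$, where the domain is not an interval and the integration trick does not directly apply. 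Your ODE route is arguably cleaner for the statement as given, but does not extend as readily to that next lemma. Your sketched ``alternative'' via additivity of $\log\pi$ on a small interval is essentially the same circle of ideas as the paper's proof, modulo the minor point that the paper avoids proving full local additivity of $\log\pi$ by using only the halving identity and dyadic density, which slightly reduces the bookkeeping about domains of injectivity of $\exp$.
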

\begin{proof}
Since $\pi$ is continuous there is a $\delta>0$ such that for all $t\in\mathbb R$ with $|t|<\delta$ we have $\norm{\pi(t)-1}<1$. Let $0<t<\delta$. The formal power series $\log(x+1)$ has radius of convergence 1, so $\alpha=\frac1t\log(\pi(t))$ is well-defined. Then $\pi(t)=\exp(t\alpha)$. Also $\alpha'=\frac2t\log(\pi(t/2))$ is well-defined. Since $\log(x^2)=2\log(x)$ whenever $|x-1|<1$ and $|x^2-1|<1$, we get $\alpha'=\alpha$. So $\pi(t/2)=\exp(t/2\alpha)$. By induction we get $\pi(t2^{-n})=\exp(t2^{-n}\alpha)$ for positive integers $n$. For integers $m$ it also follows that $\pi(tm2^{-n})=\pi(t2^{-n})^m=\exp(tm2^{-n}\alpha)$. Since the set $\{tm2^{-n}\mid m,n\in\mathbb Z\}$ is already dense in $\mathbb R$ it follows that $\pi(x)=\exp(x\alpha)$ for all $x\in\mathbb R$.
\end{proof}

This result also holds for continuous representations of dense subgroups of $\mathbb R$. This is used in the computation of the fundamental group for noncommutative tori in section 4.
\begin{lemma}\label{lem-reps_dense_subgroup_R}
Let $G\subseteq \mathbb R$ be a dense subgroup of $\mathbb R$. Let $V$ be a finite-dimensional vector space. Each continuous representation $\pi:G\to\GL(V)$ extends to a representation of $\mathbb R$, and is therefore of the form $\pi(t)=\exp(t\alpha)$ for some $\alpha\in\End(V)$.
\end{lemma}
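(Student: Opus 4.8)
The plan is to show that $\pi$ extends to a continuous representation of all of $\mathbb R$, and then to quote Lemma \ref{lem-reps_of_R}.

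First I would use continuity of $\pi$ at $0$ to fix $\delta_0>0$ with $\norm{\pi(g)-1}<1$, hence $\norm{\pi(g)}<2$, for all $g\in G$ with $|g|<\delta_0$. A useful preliminary observation is that every $g\in G$ is a finite sum of elements of $G\cap(-\delta_0,\delta_0)$: since $G$ is dense we may pick once and for all some $h\in G$ with $0<h<\delta_0$, and then $g=nh+r$ with $n=\lfloor g/h\rfloor\in\mathbb Z$ and $r=g-nh\in G\cap[0,h)$, so $g$ is a sum of at most $|g|/h+2$ elements of $G\cap(-\delta_0,\delta_0)$. Applying $\pi$ and submultiplicativity of the operator norm, this gives $\norm{\pi(g)}\le 2^{R/h+2}=:M_R$ for all $g\in G\cap[-R,R]$; thus $\pi$ is \emph{bounded} on each $G\cap[-R,R]$.

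Next I would deduce that $\pi$ is uniformly continuous on each $G\cap[-R,R]$: for $g,g'\in G$ one has $\pi(g)-\pi(g')=\pi(g')\bigl(\pi(g-g')-1\bigr)$, so $\norm{\pi(g)-\pi(g')}\le M_R\,\norm{\pi(g-g')-1}$ when $g,g'\in G\cap[-R,R]$, and the right-hand side tends to $0$ as $|g-g'|\to0$ by continuity of $\pi$ at $0$. Since $G\cap[-R,R]$ is dense in $[-R,R]$, the map $\pi$ extends uniquely to a continuous map $[-R,R]\to\End(V)$; as $R$ varies these extensions are compatible and glue to a continuous map $\rho:\mathbb R\to\End(V)$ with $\rho|_G=\pi$. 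Choosing, for $s,t\in\mathbb R$, sequences $s_n,t_n\in G$ with $s_n\to s$, $t_n\to t$ and passing to the limit in $\pi(s_n+t_n)=\pi(s_n)\pi(t_n)$ shows $\rho(s+t)=\rho(s)\rho(t)$, while $\rho(0)=1$ (again by continuity at $0$) forces $\rho(t)\in\GL(V)$. Hence $\rho$ is a continuous representation of $\mathbb R$ extending $\pi$, and Lemma \ref{lem-reps_of_R} provides $\alpha\in\End(V)$ with $\rho(t)=\exp(t\alpha)$; restricting to $G$ completes the proof.

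I expect the main obstacle to be the boundedness statement $\norm{\pi|_{G\cap[-R,R]}}\le M_R<\infty$. Since $G\cap[-R,R]$ is merely dense in the compact interval $[-R,R]$ rather than compact itself, continuity of $\pi$ alone does not yield this; one genuinely needs the group law together with the density of $G$, in the form of the decomposition $g=nh+r$ into a bounded number of ``small'' pieces. Once this is in hand, the uniform continuity, the extension, and the verification that $\rho$ is a homomorphism are all routine.
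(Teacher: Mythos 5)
Your proof is correct and follows the same three-step route as the paper's: boundedness of $\pi$ on bounded intervals, uniform continuity there, and then extension to a continuous representation of $\mathbb R$ to which Lemma \ref{lem-reps_of_R} applies. The only real difference is that you spell out the boundedness step more carefully: the paper's phrase ``for all positive integers $n$ we have $\pi(nt)=\pi(t)^n$, and it follows that $\pi$ is bounded on bounded intervals'' taken literally only bounds $\pi$ on elements of the form $nt$ with $t\in G$ small (and $g/n$ need not lie in $G$), whereas your decomposition $g=nh+r$ with a fixed small $h\in G$ and remainder $r\in G\cap[0,h)$ covers every $g\in G\cap[-R,R]$ and makes this step airtight.
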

\begin{proof}
Since the representation is continuous there is a $\delta>0$ such that for $t\in[-\delta,\delta]\cap G$ we have $\norm{\pi(t)-1}\leq 1$, so $\norm{\pi(t)}\leq 2$. For all positive integers $n$ we have $\pi(nt)=\pi(t)^n$, and it follows that $\pi$ is bounded on bounded intervals.

Now we show from this that $\pi$ is uniformly continuous on bounded intervals. Let $\epsilon>0$. There is $\delta>0$ such that for all $t\in G\cap(-\delta,\delta)$ we have $\norm{\pi(t)-1}<\epsilon$. Let $M>0$ and let $x,y\in G\cap [-M,M]$ with $|x-y|<\delta$. Then $\norm{\pi(x)-\pi(y)}\leq \norm{\pi(x)}\cdot \norm{1-\pi(y-x)}\leq \norm{\pi(x)}\cdot \epsilon$. Since $\pi$ is bounded on $G\cap [-M,M]$ we see that $\pi$ is uniformly continuous on bounded intervals. Then it can be extended uniquely to a function $\mathbb R\to \End(V)$, and it follows easily that this is still a representation.
\end{proof}

\section{Proof of Lemma \ref{lem-inequality-dm}}
We have put here the proof of Lemma \ref{lem-inequality-dm} that is a bit technical and would otherwise have disrupted the flow of the argument. 

\begin{lemma}\label{lem-proof_inequality}
For $M,K\in M_n(\mathbb C)$ we have
\[\left|\frac d{dt}_{|t=0}D_m(M^*M+t\cdot 2\re(M^*[M,K]))\right|\leq 4n\norm{K}_{\HS}D_m(M^*M).\]
Here $2\re(M^*[M,K])=M^*[M,K]+(M^*[M,K])^*$ and $\norm{K}_{\HS}$ denotes the Hilbert-Schmidt norm of $K$.
\end{lemma}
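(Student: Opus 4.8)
The plan is to treat $t\mapsto D_m(M^*M+tX)$, with $X = 2\re(M^*[M,K])$, as a polynomial in $t$, compute its derivative at $0$ by a first-order perturbation argument, and estimate the outcome using the singular value decomposition of $M$, which is what renders the particular direction $X$ tractable. First I would record the linearization of $D_m$. Since the characteristic polynomial is conjugation invariant, $D_m(A)=D_m(UAU^*)$ for unitary $U$, and for Hermitian $A$ with eigenvalues $\mu_1,\dots,\mu_n$ one has $D_m(A) = e_{n-m}(\mu_1,\dots,\mu_n)$, the elementary symmetric polynomial, which is $\ge 0$ when $A\ge 0$. Writing $\Lambda = \operatorname{diag}(\lambda_1,\dots,\lambda_n)$, expanding each $(n-m)\times(n-m)$ principal minor of $\Lambda+tY$ and applying Jacobi's formula $\frac{d}{dt}\big|_0\det(D+tY)=\Tr(\operatorname{adj}(D)Y)$ for diagonal $D$ (whose adjugate is diagonal with entries $\prod_{k\neq i}d_k$), one obtains
\[
\frac{d}{dt}\Big|_{t=0} D_m(\Lambda+tY) = \sum_{j=1}^n Y_{jj}\, e_{n-m-1}\big(\lambda_i : i\neq j\big),
\]
since only the diagonal entries of $Y$ contribute to first order. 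Conjugation invariance then yields, for any Hermitian $P = U\Lambda U^*$ and any $X$, the same formula with $Y = U^*XU$; no distinctness hypothesis on the eigenvalues is required.

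Next I would bring in the singular value decomposition $M = V\Sigma W^*$, with $V,W$ unitary and $\Sigma = \operatorname{diag}(\sigma_1,\dots,\sigma_n)$, $\sigma_i \ge 0$, so that $M^*M = W\Sigma^2 W^*$ has eigenbasis the columns of $W$ and eigenvalues $\lambda_i = \sigma_i^2$. Expanding $X = M^*MK - M^*KM + K^*M^*M - M^*K^*M$ and conjugating by $W$, using $W^*M^*MW = \Sigma^2$ and $W^*M^*KMW = \Sigma(V^*KV)\Sigma$, one finds
\[
W^*XW = \Sigma^2 A - \Sigma B\Sigma + A^*\Sigma^2 - \Sigma B^*\Sigma, \qquad A := W^*KW,\quad B := V^*KV,
\]
so that $(W^*XW)_{jj} = 2\sigma_j^2\,\re(A_{jj}-B_{jj})$, and $\norm{A}_{\HS} = \norm{B}_{\HS} = \norm{K}_{\HS}$ by unitary invariance of the Hilbert--Schmidt norm. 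Substituting into the linearization formula with $P = M^*M$ gives
\[
\frac{d}{dt}\Big|_{t=0} D_m(M^*M+tX) = \sum_{j=1}^n 2\sigma_j^2\,\re(A_{jj}-B_{jj})\, e_{n-m-1}\big(\sigma_i^2 : i\neq j\big).
\]

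For the estimate, I would use that all $\sigma_i^2 \ge 0$, so every summand of $e_{n-m}(\sigma_1^2,\dots,\sigma_n^2)$ is non-negative and hence $\sigma_j^2\, e_{n-m-1}(\sigma_i^2:i\neq j) \le e_{n-m}(\sigma_1^2,\dots,\sigma_n^2) = D_m(M^*M)$ for each $j$. Therefore
\[
\left|\frac{d}{dt}\Big|_{t=0} D_m(M^*M+tX)\right| \le 2\,D_m(M^*M)\sum_{j=1}^n\big(|A_{jj}| + |B_{jj}|\big),
\]
and Cauchy--Schwarz closes the argument: $\sum_j|A_{jj}| \le \sqrt n\,\big(\sum_j|A_{jj}|^2\big)^{1/2} \le \sqrt n\,\norm{A}_{\HS} = \sqrt n\,\norm{K}_{\HS}$, and likewise for $B$, so the sum is at most $2\sqrt n\,\norm{K}_{\HS} \le 2n\norm{K}_{\HS}$, which delivers the claimed bound $4n\norm{K}_{\HS}\,D_m(M^*M)$.

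The main obstacle here is a pitfall rather than a hard step: one is tempted to diagonalize only $M^*M$, but the term $M^*KM$ occurring in $X$ involves $M$ itself, not just $M^*M$, so that does not suffice. Passing to the full singular value decomposition resolves this — it diagonalizes $M^*M$ (via $W$) while converting $M^*KM$ into $\Sigma(V^*KV)\Sigma$, and this is precisely what endows each diagonal entry $(W^*XW)_{jj}$ with the factor $\sigma_j^2$. That factor is indispensable: without it the quantity $e_{n-m-1}(\sigma_i^2:i\neq j)$ could not be dominated by $D_m(M^*M)$ and the inequality would collapse. The rest is routine bookkeeping of signs and symmetric functions, which, as the statement's remark indicates, may equally be performed entrywise and then extended to $M,K\in M_n(\A)$ by continuity.
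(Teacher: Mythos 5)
Your proof is correct, and it takes a genuinely different route from the paper's. The paper diagonalizes only $M^*M$, keeps $M$ arbitrary, computes the diagonal entries of $M^*[M,K]$ entrywise, and controls the summands by the pointwise bound $|(M^*)_{ij}M_{li}|\le \tfrac12(|M_{ji}|^2+|M_{li}|^2)\le\lambda_i$, after which it sums crudely over all index pairs $(j,l)$. You instead pass to the full singular value decomposition $M=V\Sigma W^*$, which simultaneously diagonalizes $M^*M$ and converts $M^*KM$ into $\Sigma(V^*KV)\Sigma$; this is precisely the structural observation that makes the diagonal entries of the perturbation come out as $2\sigma_j^2\re(A_{jj}-B_{jj})$, so that the bound $\sigma_j^2\,e_{n-m-1}(\sigma_i^2:i\ne j)\le e_{n-m}(\sigma^2)=D_m(M^*M)$ applies term by term, and Cauchy--Schwarz on the diagonals finishes. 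Your route is cleaner and in fact yields the sharper constant $4\sqrt n$, which you then relax to $4n$. Two incidental remarks. First, the paper's stated definition of $D_m$ gives $D_m=e_{n-m}$ of the eigenvalues, whereas its proof of this lemma uses $D_m=e_m$; the two conventions are interchanged by $m\mapsto n-m$ and the inequality is symmetric under this, so nothing is at stake, but your usage $D_m=e_{n-m}$ is the one consistent with the paper's definition. Second, the step in the paper's proof where $\sum_{|S|=m}\sum_{i\in S}$ is traded for a product of two single sums silently drops a factor of $|S|=m$; since $m\le n$ the final inequality survives, but your argument avoids this slip entirely by bounding $\sigma_j^2\,e_{n-m-1}(\cdot)$ by the full $D_m(M^*M)$ for each $j$ separately.
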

\begin{proof}
The inequality is invariant under a unitary change of basis of $M$, and $M^*M$ is self-adjoint so we may choose a basis in which $M^*M$ is diagonal, with eigenvalues $\lambda_1,\lambda_2,\ldots,\lambda_n\in \mathbb C$. Now \[D_m(M^*M)=\sum_{|S|=m}\prod_{i\in S}\lambda_i,\]
where $S$ runs over the $m$-element subsets of $\{1,2,\ldots,n\}$. Let
\[M(t)=M^*M+t\cdot M^*[M,K].\]
The $t$-coefficient in the polynomial $P(t)=D_m(M(t))\in \mathbb C[t]$ is $\frac d{dt}_{|t=0}D_m(M(t))$. The matrix $M(t)$ only has multiples of $t$ outside the diagonal. The determinant of an $m\times m$ submatrix is then modulo $t^2$ equal to the product of the values on its diagonal. So
\[P_m(t)=\sum_{S=|m|}\prod_{i\in S}M(t)_{ii} \mod t^2.\]
We have
\[M(t)_{ii}=\lambda_i+t\left(\lambda_iK_{ii}-\sum_{j,l=1}^n(M^*)_{ij}K_{jl}M_{li}\right).\]
The $t$-coefficient in $P_m(t)$ is then
\[\frac d{dt}_{|t=0}D_m(M(t))=\sum_{|S|=m}\sum_{i\in S}\left(\lambda_iK_{ii}-\sum_{j,l=1}^n(M^*)_{ij}K_{jl}M_{li}\right)\prod_{s\in S\setminus\{i\}}\lambda_s.\]
For all $i,j,l$ we have $(M^*)_{ij}M_{li}\leq \frac12(|M_{ji}|^2+|M_{li}|^2)\leq \sum_{k=1}^n|M_{ki}|^2=\lambda_i$. So we get
\begin{align*}
    \left|\frac d{dt}_{|t=0}D_m(M(t))\right|&=\left|\sum_{|S|=m}\sum_{i\in S}\left(\lambda_iK_{ii}-\sum_{j,l=1}^n(M^*)_{ij}K_{jl}M_{li}\right)\prod_{s\in S\setminus\{i\}}\lambda_s\right|\\
    &\leq\sum_{|S|=m}\sum_{i\in S}\left|\lambda_iK_{ii}-\sum_{j,l=1}^n(M^*)_{ij}K_{jl}M_{li}\right|\prod_{s\in S\setminus\{i\}}\lambda_s\\
    &\leq \sum_{|S|=m}\sum_{i\in S}\left(\lambda_i|K_{ii}|+\sum_{j,l=1}^n\lambda_i|K_{jl}|\right)\prod_{s\in S\setminus\{i\}}\lambda_s\\
    &\leq \left(\sum_{i=1}^n|K_{ii}|+\sum_{j,l=1}^n|K_{jl}|\right)\sum_{|S|=m}\prod_{s\in S}\lambda_s\\
    &\leq 2n\norm{K}_{\HS}D_m(M^*M).
\end{align*}
Now we conclude
\begin{align*}
\left|\frac d{dt}_{|t=0}D_m(M^*M+t\cdot 2\re(M^*[M,K]))\right|&=\\
\left|2\re\left(\frac d{dt}_{|t=0}D_m(M(t))\right)\right|&\leq 4n\norm K_{\HS}D_m(M^*M).\qedhere
\end{align*}
\end{proof}


\end{document}